\documentclass[11pt,english,openany,oneside]{article}

\usepackage[T1]{fontenc}
\usepackage[latin9]{inputenc}
\setlength{\parskip}{\smallskipamount}
\setlength{\parindent}{0pt}
\usepackage{fullpage}
\usepackage{units}
\usepackage{amsthm}
\usepackage{amssymb}
\usepackage{mathrsfs}
\usepackage{mathtools}
\usepackage[all]{xy}
\usepackage{wrapfig}
\usepackage[english]{babel}
\usepackage{indentfirst}
\usepackage{fancyhdr}
\usepackage{amssymb}
\usepackage{amsmath}
\usepackage{latexsym}
\usepackage{amsthm}
\usepackage{eucal}
\usepackage{eufrak}
\usepackage[pdftex]{graphicx}
\usepackage{amscd}
\usepackage{color}
\usepackage{lipsum}
\linespread{1.2}



\numberwithin{equation}{section} 
\numberwithin{figure}{section} 
\theoremstyle{plain}
\newtheorem{thm}{Theorem}[section]
  \theoremstyle{definition}
  \newtheorem{defn}[thm]{Definition}
  \theoremstyle{definition}
  \newtheorem{example}[thm]{Example}
  \theoremstyle{remark}
  \newtheorem{rem}[thm]{Remark}
  \theoremstyle{plain}
  \newtheorem{prop}[thm]{Proposition}
  \theoremstyle{plain}
  \newtheorem{cor}[thm]{Corollary}
  \theoremstyle{plain}
  \newtheorem{lemma}[thm]{Lemma}
  \theoremstyle{remark}
  
  \theoremstyle{plain}
  
  \theoremstyle{plain}
  
  \theoremstyle{remark}
  \newtheorem{war}[thm]{Warning}

\newcommand\phantomarrow[2]{%
  \setbox0=\hbox{$\displaystyle #1\to$}%
  \hbox to \wd0{%
    $#2\mapstochar
     \cleaders\hbox{$\mkern-1mu\relbar\mkern-3mu$}\hfill
     \mkern-7mu\rightarrow$}%
  \,}
\makeatletter
\newcommand*{\doublerightarrow}[2]{\mathrel{
  \settowidth{\@tempdima}{$\scriptstyle#1$}
  \settowidth{\@tempdimb}{$\scriptstyle#2$}
  \ifdim\@tempdimb>\@tempdima \@tempdima=\@tempdimb\fi
  \mathop{\vcenter{
    \offinterlineskip\ialign{\hbox to\dimexpr\@tempdima+1em{##}\cr
    \rightarrowfill\cr\noalign{\kern.5ex}
    \rightarrowfill\cr}}}\limits^{\!#1}_{\!#2}}}
\newcommand*{\triplerightarrow}[1]{\mathrel{
  \settowidth{\@tempdima}{$\scriptstyle#1$}
  \mathop{\vcenter{
    \offinterlineskip\ialign{\hbox to\dimexpr\@tempdima+1em{##}\cr
    \rightarrowfill\cr\noalign{\kern.5ex}
    \rightarrowfill\cr\noalign{\kern.5ex}
    \rightarrowfill\cr}}}\limits^{\!#1}}}
\newcommand\blfootnote[1]{%
  \begingroup
  \renewcommand\thefootnote{}\footnote{#1}%
  \addtocounter{footnote}{-1}%
  \endgroup
}

\makeatother

\title{A Period Map for Global Derived Stacks}
\author{Carmelo Di Natale}
\date{}

\begin{document}

\maketitle

\abstract{We develop the theory of Griffiths period map, which relates the classification of smooth projective varieties to the associated Hodge structures, in the framework of Derived Algebraic Geometry. We complete the description of the local period map as a morphism of derived deformation functors, following the path marked by Fiorenza, Manetti and Martinengo. In the end we show how to lift the local period map to a (non-geometric) morphism of derived stacks, in order to construct a global version of that.}

\tableofcontents{}

\section*{Introduction}

Let\blfootnote{This work was supported by the Engineering and Physical Sciences Research Council [grant number EP/I004130/1].} $X$ be a smooth complex projective variety of dimension $d$ and consider a family of deformations $\mathcal X\rightarrow S$ of $X$ over some contractible base $S$; in 1968 Griffiths observed that any such Kuranishi family induces canonically a variation of Hodge structures on $X$. More formally let 
\begin{equation*}
0=F^{k+1}H^k\left(X\right)\hookrightarrow F^kH^k\left(X\right)\hookrightarrow\cdots\hookrightarrow F^1H^k\left(X\right)\hookrightarrow F^0H^k\left(X
\right)=H^k\left(X\right)
\end{equation*}
be the Hodge filtration on cohomology and set $b^{p,k}:=\mathrm{dim}\,F^pH^k\left(X\right)$; define 
\begin{equation*}
\mathrm{Grass}\left(H^*\left(X\right)\right):=\prod_k\mathrm{Grass}\left(b^{p,k},H^k\left(X\right)\right)
\end{equation*}
which is a complex projective variety as so are the Grassmannians involved. Griffiths constructed the morphism
\begin{eqnarray} \label{intr G lpm}
\mathcal P^p:&S&\xrightarrow{\hspace*{0.5cm}}\mathrm{Grass}\left(H^*\left(X\right)\right) \nonumber \\ 
&t&\longmapsto \prod_kF^pH^k\left(X_t\right)
\end{eqnarray}
where $X_t$ is the fibre of the family $\mathcal X\rightarrow S$ over $t\in S$; map \eqref{intr G lpm} is said to be the \emph{$p$\textsuperscript{th} local period map} associated to $\mathcal X\rightarrow S$. In \cite{Grif} Griffiths proved that such a map is well-defined and holomorphic; he also computed its differential and showed that it is the same as the contraction map on the space $H^1\left(X,\mathscr T_X\right)$ of first-order deformations of $X$. Moreover it is possible to use map \eqref{intr G lpm} to derive some constraints on the obstructions of $X$. It is important to notice that, despite connecting two algebraically defined objects, map \eqref{intr G lpm} is not algebraic: as a matter of fact one can show that its image lies in the so-called \emph{period domain} (see \cite{Vo} Section 10.1.3), which is a subspace of the Grassmannian determined in general by transcendental equations. \\
The existence and holomorphicity of the local period map says that for any given Kuranishi family of a projective manifold $X$ there is a canonical way to construct a variation of its Hodge structures; moreover such a correspondence seems to be compatible with the general deformation theory of the variety $X$: 
prompted by this observation, in 2006 Fiorenza and Manetti described Griffiths period map in terms of deformation functors. Let 
\begin{eqnarray*}
\mathrm{Def}_X:&\mathfrak{Art}_{\mathbb C}&\xrightarrow{\hspace*{2cm}}\mathfrak{Set} \\
&A&\mapsto\frac{\left\{\text{deformations of }X\text{ over }A\right\}}{\text{isomorphism}}
\end{eqnarray*}
be the functor of Artin rings parametrising the deformations of the variety $X$ and recall that such a deformation functor is isomorphic to the deformation functor associated to the Kodaira-Spencer dgla $KS_X:=\mathbb R\Gamma\left(X,\mathscr T_X\right)$; in a similar way for all non-negative $p$ define the functor of Artin rings
\begin{eqnarray*}
\mathrm{Grass}_{F^pH^*\left(X\right),H^*\left(X\right)}:&\mathfrak{Art}_{\mathbb C}&\xrightarrow{\hspace*{3.5cm}}\mathfrak{Set} \\
&A&\mapsto\frac{\left\{A\text{-deformations of }F^pH^*\left(X\right)\text{ inside } H^*\left(X\right)\right\}}{\text{isomorphism}}
\end{eqnarray*}
which describes the deformations of the complex $F^pH^*\left(X\right)$ as a subcomplex of $H^*\left(X\right)$: this functor precisely encodes variations of Hodge structures on $X$. In \cite{FMan1}, \cite{FMan2} and \cite{FMan3} Fiorenza and Manetti proved the following facts:
\begin{itemize}
\item $\mathrm{Grass}_{F^pH^*\left(X\right),H^*\left(X\right)}$ is a deformation functor (in the sense of Schlessinger) and 
\begin{equation*}
\mathrm{Grass}_{F^pH^*\left(X\right),H^*\left(X\right)}\simeq\mathrm{Def}_{C_{\chi}}
\end{equation*}
where $C_{\chi}$ is the $L_{\infty}$-algebra defined as the cone of the inclusion of dgla's
\begin{equation*}
\chi:\mathrm{End}^{F^p}\left(H^*\left(X\right)\right)\hookrightarrow\mathrm{End}^*\left(H^*\left(X\right)\right)
\end{equation*}
with 
\begin{equation*}
\mathrm{End}^{F^p}\left(H^*\left(X\right)\right):=\left\{f\in\mathrm{End}^*\left(H^*\left(X\right)\right)\big|f\left(F^pH^*\left(X\right)\right)\subseteq F^pH^*\left(X\right)\right\};
\end{equation*}
\item the map
\begin{eqnarray*}
\mathrm{FM}^p:&KS_X&\xrightarrow{\hspace*{0.75cm}}C_{\chi} \\
&\xi&\longmapsto\left(l_{\xi},i_{\xi}\right)
\end{eqnarray*}
where $i$ is the contraction of differential forms with vector fields and $l$ stands for the holomorphic Lie derivative, is a $L_{\infty}$-morphism, thus it induces a morphism of deformation functors
\begin{equation*}
\mathrm{FM}^p:\mathrm{Def}_{KS_X}\longrightarrow\mathrm{Def}_{C_{\chi}};\footnote{Here, by a slight abuse of notation, the symbol $\mathrm{FM}^p$ is denoting both the $L_{\infty}$-map and the induced morphism of deformation functors.}
\end{equation*}
\item the natural transformation
\begin{eqnarray} \label{FM lpm intr}
\mathcal P^p:\mathrm{Def}_X\quad\;&\xrightarrow{\hspace*{0.5cm}}&\mathrm{Grass}_{F^pH^*\left(X\right),H^*\left(X\right)} \\
\forall A\in\mathfrak{Art}_{\mathbb C}\quad\left(\mathscr O_A\overset{\xi}{\rightarrow}\mathscr O_X\right)&\longmapsto& \quad F^pH^*\left(X,\mathscr  O_A\right) \nonumber
\end{eqnarray}
is a morphism of deformation functors extending Griffiths period map\footnote{This is why, by a slight abuse of notation, we are using the same symbol for both.} and the two morphisms $\mathrm{FM}^p$ and $\mathcal P^p$ are canonically isomorphic.
\end{itemize}
Fiorenza and Manetti's work shows that the $p$\textsuperscript{th} local period map is actually a \emph{morphism of deformation theories}, thus it commutes with all deformation-theoretic constructions: in particular all results of Griffiths about the differential of map \eqref{intr G lpm} are easily recovered as purely formal corollaries of the preceding statements. Moreover Fiorenza and Manetti's construction works for any proper smooth scheme of dimension $d$ over a field $k$ of characteristic $0$.\\
As we are able to interpret Griffiths period map as a natural transformation of deformation functors, the next step would be to look at it in the context of Derived Deformation Theory: more formally one could ask whether there exist derived enhancements of the functors $\mathrm{Def}_X$ and $\mathrm{Grass}_{F^pH^*\left(X\right),H^*\left(X\right)}$ for which it is possible to find some natural derived extension of morphism \eqref{FM lpm intr}. In 2012 Fiorenza and Martinengo approached this problem, tackling it from an entirely algebraic viewpoint. As a matter of fact they observed that the contraction of differential forms with vector fields $i$ (seen in the most general way, i.e. as a morphism of complexes of sheaves over $X$) and the Lie derivative $l$ give rise to a morphism of differential graded Lie algebras
\begin{equation} \label{FMM htpy fibre}
\mathrm{FMM}:\mathbb R\Gamma\left(X,\mathscr T_X\right)\overset{\left(l,e^i\right)}{\xrightarrow{\hspace*{1cm}}}\underset{\longleftarrow}{\mathrm{holim}}\left(\mathrm{End}^{\geq 0}\left(\mathbb R\Gamma\left(X,\Omega^*_X\right)\right)\doublerightarrow{\mathrm{incl.}}{0}\mathrm{End}^*\left(\mathbb R\Gamma\left(X,\Omega^*_X\right)\right)\right)
\end{equation}
where $\mathrm{End}^{\geq 0}\left(\mathbb R\Gamma\left(X,\Omega^*_X\right)\right)$ is the dgla made of non-negatively graded maps of the complex $\mathbb R\Gamma\left(X,\Omega^*_X\right)$ in itself, which can be interpreted as the dgla of all filtration-preserving maps. Notice also that the codomain of map \eqref{FMM htpy fibre} is nothing but the homotopy fibre over $0$ of the inclusion of $\mathrm{End}^{\geq 0}\left(\mathbb R\Gamma\left(X,\Omega^*_X\right)\right)$ into $\mathrm{End}\left(\mathbb R\Gamma\left(X,\Omega^*_X\right)\right)$. In \cite{FMar} Fiorenza and Martinengo showed that map \eqref{FMM htpy fibre} induces a morphism of derived deformation functors 
\begin{equation} \label{FMM lpm}
\mathrm{FMM}:\mathbb R\mathrm{Def}_{\mathbb R\Gamma\left(X,\mathscr T_X\right)}\overset{\left(l,e^i\right)}{\xrightarrow{\hspace*{1cm}}}\mathbb R\mathrm{Def}_{\underset{\longleftarrow}{\mathrm{holim}}\left(\mathrm{End}^{\geq 0}\left(\mathbb R\Gamma\left(X,\Omega^*_X\right)\right)\doublerightarrow{\mathrm{incl.}}{0}\mathrm{End}^*\left(\mathbb R\Gamma\left(X,\Omega^*_X\right)\right)\right)}
\end{equation}
whose $0$-truncation $\mathrm{FM}$ is very close to $\mathrm{FM}^p$ (actually $\mathrm{FM}$ is even more interesting than $\mathrm{FM}^p$ as it does not depend on the degree of the filtration, so it can be interpreted as a universal version of Griffiths period map). \\
The ultimate goal of this paper is to lift Fiorenza, Manetti and Martinengo's work to a global level, i.e. to find a morphism of derived stacks whose restriction to formal neighbourhoods gives back map \eqref{FMM lpm}. A first crucial step in order to do so consists of finding a more geometric description of such a map, thus we will define the morphism of derived deformation functors
\begin{eqnarray} \label{geom FMM}
&\scriptstyle{\mathcal P:\;\mathbb R\mathrm{Def}_X}&\scriptstyle{\longrightarrow\;\,\mathrm{hoFlag}\left(\mathbb R\Gamma\left(X,\Omega^*_X\right),F^{\bullet}\mathbb R\Gamma\left(X,\Omega^*_X\right)\right)} \nonumber \\
\scriptstyle{\forall A\in\mathfrak{dgArt}_k^{\leq0}}&\scriptstyle{\left[\mathscr O_{A,*}\overset{\varphi}{\longrightarrow}\mathscr O_X\right]\approx\left[\vcenter{\xymatrix{\ar@{} |{\Box^h}[dr]X\ar@{^{(}->}[r]\ar[d] & \mathcal X\ar[d] \\
\mathrm{Spec}\left(k\right)\ar[r] & \mathbb R\mathrm{Spec}\left(A\right)}}\right]}&\scriptstyle{\mapsto\qquad\left[\left(\left(\mathbb R\Gamma\left(\pi^0\mathcal X,\Omega^*_{\mathcal X/A}\right),F^{\bullet}\right),\tilde{\varphi}\right)\right]}
\end{eqnarray}
and prove that it is naturally isomorphic to map \eqref{FMM lpm}; in formula \eqref{geom FMM} $\mathbb R\mathrm{Def}_X$ parametrises derived deformations of the scheme $X$ (i.e. homotopy flat families of derived schemes deforming the underived scheme $X$), while $\mathrm{hoFlag}\left(\mathbb R\Gamma\left(X,\Omega^*_X\right),F^{\bullet}\mathbb R\Gamma\left(X,\Omega^*_X\right)\right)$ encodes derived deformations of the filtered complex $\left(\mathbb R\Gamma\left(X,\Omega^*_X\right),F^{\bullet}\right)$. Although it is intuitively quite clear what such functors should be, giving a careful definition of them reveals to be non-trivial at all and has actually lead us to develop the notions of affine differential graded category and affine simplicial category, which are probably interesting objects themselves to study. $\mathbb R\mathrm{Def}_X$ and $\mathrm{hoFlag}\left(\mathbb R\Gamma\left(X,\Omega^*_X\right),F^{\bullet}\mathbb R\Gamma\left(X,\Omega^*_X\right)\right)$ are by construction formal neighbourhoods of interesting derived stacks: the former is the formal neighbourhood at $X$ of the derived (non-geometric) stack $\mathcal{DS}ch_{d/k}$ of derived schemes of dimension $d$, which has been recently studied by Pridham in \cite{Pr3}, while the latter is the formal neighbourhood at $\left(\mathbb R\Gamma\left(X,\Omega^*_{X/k}\right),F^{\bullet}\right)$ of the homotopy flag variety $\mathcal{DF}lag_k\left(\mathbb R\Gamma\left(X,\Omega^*_{X/k}\right)\right)$, which is studied in \cite{dN2}. \\
The above tools provide us with a path towards a sensible global version of map \eqref{geom FMM}, i.e. a geometric morphism of derived geometric stacks inducing the latter on formal neighbourhoods. A very partial answer to such a question is discussed in the end of this paper, where we consider the non-geometric morphism of derived stacks
\begin{eqnarray} \label{non-geom derived Griff}
&\scriptstyle{\underline{\mathbb R\mathcal P}:\;\mathcal{DS}ch_{d/k}\times^h_{\mathbb R\mathcal Perf_k}\left\{\mathbb R\Gamma\left(X,\Omega^*_{X/k}\right)\right\}}&\scriptstyle{\longrightarrow\;\;\mathcal{DF}lag_k\left(\mathbb R\Gamma\left(X,\Omega^*_{X/k}\right)\right)} \nonumber \\
\scriptstyle{\forall A\in\mathfrak{dgAlg}_k^{\leq0}}&\scriptstyle{\left[Y,\theta:\mathbb R\Gamma\left(\pi^0 Y,\Omega^*_{Y/A}\right)\overset{\sim}{\rightarrow}\mathbb R\Gamma\left(X,\Omega^*_{X/k}\right)\otimes A\right]}&\scriptstyle{\longmapsto\;\;\;\,\left(\mathbb R\Gamma\left(\pi^0 Y,\Omega^*_{Y/A}\right),F^{\bullet}\right)}
\end{eqnarray}
and observe that it actually induces map \eqref{geom FMM} on formal neighbourhoods at $\left(X,\mathbb R\Gamma\left(X,\Omega^*_{X/k}\right)\right)$ and $\underline{\mathbb R\mathcal P}\left(\left(X,\mathbb R\Gamma\left(X,\Omega^*_{X/k}\right)\right)\right)=\left(\mathbb R\Gamma\left(X,\Omega^*_{X/k}\right),F^{\bullet}\right)$. However it is important to notice that map \eqref{non-geom derived Griff} cannot be geometric for at least two important reasons:
\begin{enumerate}
\item a global geometric morphism lifting Griffiths period map to the derived world cannot be algebraic as the classical underived map is already non-algebraic in general;
\item if we move the problem to the context of the newly-born Derived Analytic Geometry, we will have to take care of monodromy issues: in fact, given a family $p:\mathcal X\rightarrow\mathcal S$ of derived schemes (or even derived analytic spaces) globally deforming $X$, the presence of a (possibly higher) monodromy group acting non-trivially on the fibres of $\mathbb Rp_{*}\Omega^*_{\mathcal X/\mathcal S}$ may represent a serious obstruction to make map \eqref{non-geom derived Griff} into a derived analytic morphism of derived analytic stacks.
\end{enumerate}
The problem of constructing a derived geometric global period map in the context of Derived Analytic Geometry seems to be very interesting: it is now being jointly studied by Holstein and the author and will appear in \cite{dNH}. \\
$\mathbf{Acknowledgements}$ --- The author does wish to thank his PhD supervisor Jonathan P. Pridham for suggesting the problem and for his constant support and advise along all the preparation of this paper. The author is also deeply indebted to Domenico Fiorenza, Ian Grojnowski, Julian V. S. Holstein, Donatella Iacono, Dominic Joyce, Marco Manetti and Elena Martinengo for several inspiring discussions about the period mapping in Deformation Theory.

\section{The Period Map as a Holomorphic Function}

Let $X$ be a compact connected complex K\" ahler manifold of dimension $d$ and consider a family of deformations $\varphi:\mathcal X\rightarrow S$, i.e. a proper holomorphic submersion of complex manifolds (where the base $S$ is contractible) admitting a distinguished fibre $\varphi^{-1}\left(0\right)=:X_0\simeq X$. Recall that a famous result due to Ehresmann says that any such family is $C^{\infty}$-trivial, i.e. there exists a diffeomorphism 
\begin{equation} \label{Ehr}
T:\mathcal X\overset{\sim}{\longrightarrow}X_0\times S\simeq X\times S
\end{equation}
over $S$ (see \cite{Vo} Theorem 9.3). For all $t\in S$ let $X_t:=\varphi^{-1}\left(t\right)$: Ehresmann's trivialisation \eqref{Ehr} clearly induces a diffeomorphism $X_t\simeq X$ for all $t$, thus we can think of the morphism $\varphi$ as a collection of complex structures over the $C^{\infty}$-manifold underlying the complex variety $X$. This situation is the prototypical example of all deformation problems and was originally studied by Kodaira and Spencer. \\ 
A very natural question to ask is how the standard Hodge structures over $X$ vary with respect to the family $\varphi$; more formally, consider the cohomology algebra of $X$
\begin{equation*}
H^*\left(X,\mathbb C\right):=\bigoplus_{0\leq k\leq d}H^{k}\left(X,\mathbb C\right)
\end{equation*}
and recall that each cohomology group $H^k\left(X,\mathbb C\right)$ is endowed with a Hodge structure of weight $k$ defined by the Hodge decomposition
\begin{equation*}
H^k\left(X,\mathbb C\right)=\bigoplus_{p+q=k}H^{p.q}\left(X\right) \qquad\qquad H^{p,q}\left(X\right)\simeq H^q\left(X,\Omega^p_X\right)
\end{equation*}
or, equivalently, by the Hodge filtration
\begin{equation*}
0=F^{k+1}H^k\left(X,\mathbb C\right)\hookrightarrow F^kH^k\left(X,\mathbb C\right)\hookrightarrow\cdots\hookrightarrow F^1H^k\left(X,\mathbb C\right)\hookrightarrow F^0H^k\left(X,\mathbb C\right)=H^k\left(X,\mathbb C\right)
\end{equation*}
where
\begin{equation*}
F^mH^k\left(X,\mathbb C\right):=\bigoplus_{p\geq m}H^{p,q}\left(X\right).
\end{equation*}
The question we want to address is whether the family $\varphi$ induces any interesting structure on the cohomology of the fibres.
\subsection{Griffiths Period Map}
Observe that Ehresmann's trivialisation \eqref{Ehr} provides us with a diagram of isomorphisms of vector spaces
\begin{equation*}
\xymatrix{H^k\left(\mathcal X,\mathbb C\right)\ar[r]^{\sim}\ar@{=}[d] & H^k\left(X,\mathbb C\right)\ar[d]_{\wr} \\
H^k\left(\mathcal X,\mathbb C\right)\ar[r]^{\sim} & H^k\left(X_t,\mathbb C\right)}
\end{equation*}
which commutes for all $k\geq 0$ and $\forall t\in S$; actually much more is true, as for all $k\geq 0$ the sheaf $\mathbb R^k\varphi_*\underline{\mathbb C}$ -- where $\varphi_*:\mathfrak{Sh}\left(\mathcal X\right)\rightarrow\mathfrak{Sh}\left(S\right)$ is the push-forward functor -- is seen to be a local system over $S$ isomorphic to the constant sheaf $\underline{H^k\left(X,\mathbb C\right)}$ (see \cite{Vo} Section 9.2 for a more detailed explanation), thus the diagram above does not depend on the choice of the trivialisation. Denote
\begin{equation*}
h^k:=\mathrm{dim}\,H^k\left(X,\mathbb C\right)\qquad\qquad h^{p,q}:=\mathrm{dim}\,H^{p,q}\left(X\right)\qquad\qquad b^{p,k}:=\mathrm{dim}\,F^pH^k\left(X,\mathbb C\right).
\end{equation*} 
A standard argument based on the $E_1$-degeneration of the Hodge-to-De Rham spectral sequence of $X$ shows that there exists a neighbourhood of $0\in S$ such that 
\begin{equation*}
\mathrm{dim}\,H^{p,q}\left(X_t\right)=\mathrm{dim}\,H^{p,q}\left(X\right)=:h^{p,q}
\end{equation*}
thus the Hodge numbers of $X$ are invariant under (infinitesimal) deformation; moreover this immediately implies that the Hodge-to-De Rham spectral sequence of such fibres degenerates at its first page\footnote{Up to shrinking the base $S$, the fibres of $\varphi$ are K\" ahler manifolds themselves (see \cite{Vo} Theorem 9.23).}, as well (see \cite{Vo} Proposition 9.20). 
\begin{defn} (Griffiths) In the above notations define the \emph{$(p,k)$\textsuperscript{th} local period map} to be
\begin{eqnarray} \label{G plpm}
\mathcal P^{p,k}:&S&\xrightarrow{\hspace*{0.5cm}}\mathrm{Grass}\left(b^{p,k},H^k\left(X,\mathbb C\right)\right) \nonumber \\ 
&t&\longmapsto F^pH^k\left(X_t,\mathbb C\right).
\end{eqnarray}
\end{defn}
Since $H^k\left(X_t,\mathbb C\right)$ is canonically isomorphic to $H^k\left(X,\mathbb C\right)$ and the Hodge numbers of $X$ are invariant under deformation, map \eqref{G plpm} is well-defined; the following is a famous result of Griffiths.
\begin{thm} \emph{(Griffiths)}
The $(p,k)$\textsuperscript{th} local period map \eqref{G plpm} is holomorphic $\forall p\leq k$.
\end{thm}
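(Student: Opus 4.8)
The plan is to realise the period map as the classifying map of a holomorphic subbundle of a holomorphically trivial bundle over $S$, and then invoke the elementary fact that a holomorphic subbundle of a trivial bundle is the pull-back of the tautological subbundle along a holomorphic map to the Grassmannian. First I would introduce the relative (holomorphic) de Rham complex $\Omega^{\bullet}_{\mathcal X/S}$ together with its naive filtration $\sigma^{\geq p}\Omega^{\bullet}_{\mathcal X/S}=\Omega^{\geq p}_{\mathcal X/S}$, and set $\mathcal H^k:=\mathbb R^k\varphi_{*}\Omega^{\bullet}_{\mathcal X/S}$ and $\mathcal F^p\mathcal H^k:=\operatorname{im}\bigl(\mathbb R^k\varphi_{*}\Omega^{\geq p}_{\mathcal X/S}\to\mathcal H^k\bigr)$. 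The holomorphic Poincar\'e lemma identifies $\mathcal H^k$ with $\mathbb R^k\varphi_{*}\underline{\mathbb C}\otimes_{\mathbb C}\mathcal O_S$, i.e. with the holomorphic vector bundle underlying the local system $\mathbb R^k\varphi_{*}\underline{\mathbb C}$ equipped with the Gauss--Manin connection, and the fibre of $\mathcal H^k$ at $t\in S$ is canonically $H^k_{\mathrm{dR}}(X_t)\cong H^k(X_t,\mathbb C)$, compatibly with the identifications $H^k(X_t,\mathbb C)\cong H^k(X,\mathbb C)$ already used to make \eqref{G plpm} well-defined.

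The crucial step is to prove that $\mathcal F^p\mathcal H^k$ is a holomorphic subbundle of $\mathcal H^k$ whose formation commutes with base change, so that its fibre at $t$ is exactly $F^pH^k(X_t,\mathbb C)$. This is where the $E_1$-degeneration of the Hodge--to--de Rham spectral sequence is used: we have already observed, via deformation-invariance of the Hodge numbers, that this spectral sequence degenerates for every fibre $X_t$ in a suitable neighbourhood of $0$, so $\dim F^pH^k(X_t,\mathbb C)=\sum_{q\geq p}h^{q,k-q}=b^{p,k}$ is independent of $t$. A Grauert-type cohomology-and-base-change argument applied to the hypercohomology of $\Omega^{\geq p}_{\mathcal X/S}$ (properness of $\varphi$ plus constancy of the fibrewise dimensions, handled fibre-by-fibre through the hypercohomology spectral sequence) then promotes $\mathbb R^k\varphi_{*}\Omega^{\geq p}_{\mathcal X/S}$ to a locally free sheaf of rank $b^{p,k}$ whose formation commutes with base change and whose canonical map into $\mathcal H^k$ is fibrewise injective with locally free cokernel, i.e. a holomorphic subbundle. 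I expect this to be the main obstacle: one must simultaneously control the two Hodge--to--de Rham spectral sequences (for $\Omega^{\bullet}_{\mathcal X/S}$ and for $\Omega^{\geq p}_{\mathcal X/S}$) and the base-change spectral sequences, and show that all of them degenerate compatibly. Shrinking $S$ around $0$ is harmless, since holomorphicity is a local statement.

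Finally, because $S$ is contractible the local system $\mathbb R^k\varphi_{*}\underline{\mathbb C}$ is trivial; fixing a flat frame yields a \emph{holomorphic} trivialising isomorphism $\mathcal H^k\cong H^k(X,\mathbb C)\otimes_{\mathbb C}\mathcal O_S$ which on fibres induces precisely the canonical isomorphisms $H^k(X_t,\mathbb C)\cong H^k(X,\mathbb C)$. Under this trivialisation $\mathcal F^p\mathcal H^k$ becomes a rank-$b^{p,k}$ holomorphic subbundle of the trivial bundle $H^k(X,\mathbb C)\times S$, hence the pull-back of the tautological subbundle along a unique holomorphic map $S\to\mathrm{Grass}\bigl(b^{p,k},H^k(X,\mathbb C)\bigr)$; reading off fibres shows this map is exactly $\mathcal P^{p,k}$, which proves holomorphicity for all $p\leq k$. (Note that no use is made here of Griffiths transversality, which is the separate, stronger statement about the \emph{differential} of $\mathcal P^{p,k}$.) As a remark I would point out the alternative, more hands-on route closer to Griffiths' original argument: represent classes in $F^pH^k(X_t)$ by relative $C^{\infty}$-forms of type $\bigoplus_{q\geq p}A^{q,k-q}$ with coefficients depending holomorphically on $t$, and verify directly that the resulting moving subspace has vanishing $(0,1)$-derivative; this bypasses the base-change formalism, but I would carry out the spectral-sequence argument in detail.
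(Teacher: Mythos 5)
Your argument is correct and is essentially the proof the paper points to (it gives no in-house argument, citing \cite{Vo} Theorem 10.9): realise $\mathcal F^p\mathcal H^k=\operatorname{im}\bigl(\mathbb R^k\varphi_*\Omega^{\geq p}_{\mathcal X/S}\to\mathbb R^k\varphi_*\Omega^{\bullet}_{\mathcal X/S}\bigr)$ as a holomorphic subbundle via fibrewise $E_1$-degeneration, constancy of the Hodge numbers and base change, then read $\mathcal P^{p,k}$ off as the classifying map of this subbundle inside the flat (Gauss--Manin) trivialisation of $\mathcal H^k$ over the contractible base. The technical points you flag (compatible degeneration of the Hodge--to--de Rham and base-change spectral sequences, fibrewise injectivity of $\mathbb H^k(X_t,\Omega^{\geq p}_{X_t})\to H^k(X_t,\mathbb C)$) are exactly the ones handled in the cited source, so no gap remains.
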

\begin{proof}
See \cite{Vo} Theorem 10.9.
\end{proof}
\subsection{The Differential of the Period Mapping}
Griffiths deeply studied the differential of map \eqref{G plpm}, as well: in order to state his result let us review what the \emph{contraction of differential forms with vector fields} and the \emph{(holomorphic) Lie derivative} are. Recall that the tangent sheaf $\mathscr T_X$ is endowed with a natural structure of sheaf of Lie algebras (which can be considered as dgla's concentrated in degree $0$) induced by the canonical isomorphism $\mathscr T_X\simeq\mathcal Der\left(\mathscr O_X,\mathscr O_X\right)$, while $\mathcal End^*\left(\Omega_X^*\right)$ comes with a structure of sheaf of differential graded Lie algebras through the standard differential on $\mathrm{Hom}$ complexes and the standard Lie bracket. Now the contraction morphism is defined to be the {} ``shifted'' map of sheaves of differential graded Lie algebras
\begin{eqnarray} \label{contr map}
i:&\mathscr T_X&\xrightarrow{\hspace*{0.25cm}}\mathcal End^*\left(\Omega_X^*\right)\left[-1\right] \nonumber \\
&\xi&\longmapsto i_{\xi}\text{ such that }i_{\xi}\left(\omega\right):=\xi\lrcorner\,\omega\qquad\qquad\text{(on local sections)}
\end{eqnarray}
while the differential of map \eqref{contr map} (as an element of the complex $\mathrm{Hom}^*\left(\mathscr T_X,\mathcal End^*\left(\Omega_X^*\right)\left[-1\right]\right)$) is by definition the Lie derivative
\begin{eqnarray} \label{Lie deriv}
l:&\mathscr T_X&\xrightarrow{\hspace*{0.25cm}}\mathcal End^*\left(\Omega_X^*\right) \nonumber \\
&\xi&\longmapsto l_{\xi}\text{ such that }l_{\xi}\left(\omega\right):=d\left(\xi\lrcorner\,\omega\right)+\xi\lrcorner\,\left(d\omega\right)\quad\qquad\text{(on local sections)}
\end{eqnarray}
which is a genuine morphism of sheaves of dgla's\footnote{By a slight abuse of notation we will tend to denote by $i$ and $l$ the morphisms that maps \eqref{contr map} and \eqref{Lie deriv} induce on global sections and derived global sections, as well.}. 
\begin{thm} \emph{(Griffiths)}
The differential $d\mathcal P^{p,k}$ of map \eqref{G plpm} factors through the (cohomology) contraction map
\begin{equation} \label{dG plpm}
i:H^1\left(X,\mathscr T_X\right)\longrightarrow\mathrm{Hom}\left(F^pH^k\left(X,\mathbb C\right),\frac{H^k\left(X,\mathbb C\right)}{F^pH^k\left(X,\mathbb C\right)}\right).
\end{equation} 
Moreover map \eqref{dG plpm} actually takes values in $\mathrm{Hom}\left(F^pH^k\left(X,\mathbb C\right),\frac{F^{p-1}H^k\left(X,\mathbb C\right)}{F^pH^k\left(X,\mathbb C\right)}\right)$.\footnote{This last property is generally known as \emph{Griffiths transversality}.}
\end{thm}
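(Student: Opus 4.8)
The plan is to compute $d\mathcal P^{p,k}$ directly through the Gauss--Manin connection and then to match it, term by term, with the contraction map --- the classical argument of Griffiths; a posteriori the statement also drops out of the $L_\infty$-description $\mathrm{FM}^p$ recalled in the introduction, whose linearization is governed by $\xi\mapsto i_\xi$, but in this section I would argue directly. \textbf{Step 1 (the two tangent spaces).} First I would pin down source and target. The Kodaira--Spencer map $\rho\colon T_0S\to H^1(X,\mathscr T_X)$ sends a tangent vector to the class of the first-order deformation it induces, so the assertion that ``$d\mathcal P^{p,k}$ factors through \eqref{dG plpm}'' means exactly $d\mathcal P^{p,k}=i\circ\rho$, and it is enough to evaluate $d\mathcal P^{p,k}(v)$ in terms of $\rho(v)$. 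On the target, the tangent space to $\mathrm{Grass}(b^{p,k},H^k(X,\mathbb C))$ at the point $F^pH^k(X,\mathbb C)$ is canonically $\mathrm{Hom}(F^pH^k(X,\mathbb C),H^k(X,\mathbb C)/F^pH^k(X,\mathbb C))$, so the theorem amounts to a formula for the linear map $T_0S\to\mathrm{Hom}(F^pH^k(X,\mathbb C),H^k(X,\mathbb C)/F^pH^k(X,\mathbb C))$.

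\textbf{Step 2 (the differential via Gauss--Manin).} Since $\mathbb R^k\varphi_*\underline{\mathbb C}$ is the constant local system with fibre $H^k(X,\mathbb C)$, tensoring with $\mathscr O_S$ produces a holomorphic bundle $\mathcal H^k$ carrying the flat Gauss--Manin connection $\nabla$, and inside it the Hodge subbundle $\mathcal F^p$ (the relative version of the Hodge filtration) satisfies $\mathcal F^p_t=F^pH^k(X_t,\mathbb C)$ --- here one uses the $E_1$-degeneration of the Hodge-to-de Rham spectral sequences of the fibres, already invoked above, to know that $\mathcal F^p$ has constant rank $b^{p,k}$, hence is a subbundle. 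Trivialising $\mathcal H^k$ by $\nabla$-flat sections identifies $\mathcal P^{p,k}$ with $t\mapsto\mathcal F^p_t\subseteq H^k(X,\mathbb C)$, and a routine computation gives
\begin{equation*}
d\mathcal P^{p,k}(v)(\alpha)=\nabla_v\tilde\alpha\big|_{0}\bmod\mathcal F^p_0 ,
\end{equation*}
where $\tilde\alpha$ is any local holomorphic section of $\mathcal F^p$ with $\tilde\alpha(0)=\alpha$; this is well defined, since a section of $\mathcal F^p$ vanishing at $0$ is an $\mathscr O_S$-combination of sections of $\mathcal F^p$, whose $\nabla_v$-derivative at $0$ again lies in $\mathcal F^p_0$.

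\textbf{Step 3 (Griffiths transversality).} The ``moreover'' clause is the inclusion $\nabla\mathcal F^p\subseteq\mathcal F^{p-1}\otimes\Omega^1_S$, which I would prove in Katz--Oda style by realising $\nabla$ as the connecting homomorphism of the short exact sequence of complexes
\begin{equation*}
0\longrightarrow\varphi^*\Omega^1_S\otimes\Omega^{\bullet-1}_{\mathcal X/S}\longrightarrow\Omega^\bullet_{\mathcal X}/\left(\varphi^*\Omega^{\geq2}_S\cdot\Omega^\bullet_{\mathcal X}\right)\longrightarrow\Omega^\bullet_{\mathcal X/S}\longrightarrow0 :
\end{equation*}
the absolute exterior derivative preserves the Hodge filtration $\Omega^{\geq\bullet}_{\mathcal X}$, while $\varphi^*\Omega^1_S$ sits in Hodge weight one, so the connecting map shifts the relative Hodge filtration down by exactly one. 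Combined with Step 2 this shows that $d\mathcal P^{p,k}$ takes values in $\mathrm{Hom}(F^pH^k(X,\mathbb C),F^{p-1}H^k(X,\mathbb C)/F^pH^k(X,\mathbb C))$.

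\textbf{Step 4 (identification with contraction --- the crux).} It remains to show $d\mathcal P^{p,k}=i\circ\rho$, i.e.\ that for $\xi\in H^1(X,\mathscr T_X)$ the Gauss--Manin derivative above coincides on $\mathrm{Gr}^p_F H^k(X,\mathbb C)\cong H^{k-p}(X,\Omega^p_X)$ with cup product by $\xi$ followed by the contraction pairing $\Omega^p_X\otimes\mathscr T_X\to\Omega^{p-1}_X$. Concretely I would realise the complex structure of $X_t$ by a Beltrami differential $\phi(t)\in\mathscr A^{0,1}(X,\mathscr T_X)$ with $\phi(t)=\xi\,t+O(t^2)$, $\xi$ a Dolbeault representative of the Kodaira--Spencer class, choose a family $\tilde\alpha(t)$ of $\phi(t)$-holomorphic forms representing a flat extension of $\alpha$, and differentiate at $t=0$. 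The derivative splits --- modulo $d$-exact terms and modulo $\mathcal F^p_0$ --- into exactly $i_\xi\alpha=\xi\lrcorner\,\alpha\in\mathscr A^{p-1,k-p+1}(X)$, the remaining contributions being either $\bar\partial$-exact (hence absorbed by the freedom in the choice of $\tilde\alpha$) or already in $\mathcal F^p_0$. I expect this last step to be the real obstacle: picking coordinates adapted to $\varphi$, carrying out the expansion, and keeping careful track of which terms survive modulo $F^p$ and which die is where the work sits. Steps 1--3 are essentially formal, and once the comparison of Step 4 is in hand the statement follows, with Griffiths transversality reappearing automatically because $i_\xi$ visibly maps $F^pH^k(X,\mathbb C)$ into $F^{p-1}H^k(X,\mathbb C)$.
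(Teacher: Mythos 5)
Your plan is the classical Griffiths argument — differentiate the Hodge subbundle against the Gauss--Manin connection, prove transversality Katz--Oda style, and identify the derivative with contraction against the Kodaira--Spencer class via a Beltrami-differential expansion — which is exactly the proof the paper defers to (Voisin, Proposition 10.12, Lemma 10.19 and Theorem 10.21), so you are taking essentially the same route and it is correct, with your Step 4 being precisely the computation carried out in Voisin's Theorem 10.21.
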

\begin{proof}
The theorem has been stated in relatively modern terms, but a complete proof of it is given in \cite{Vo} Proposition 10.12, Lemma 10.19 and Theorem 10.21.  
\end{proof}
The $(p,k)$\textsuperscript{th} local period map \eqref{G plpm} depends by definition on two parameters, a cohomology one -- that is $k$ -- and a filtration one -- that is $p$; we would like to encode all cohomological information about the variations of Hodge structures induced by the family $\phi$ in a single morphism.
\begin{defn}
In the above notations define the \emph{$p$\textsuperscript{th} local period map} to be
\begin{eqnarray} \label{G lpm}
\mathcal P^p:&S&\xrightarrow{\hspace*{0.5cm}}\mathrm{Grass}\left(H^*\left(X,\mathbb C\right)\right) \nonumber \\ 
&t&\longmapsto \prod_kF^pH^k\left(X_t,\mathbb C\right).
\end{eqnarray}
\end{defn}
Notice that map \eqref{G lpm} is holomorphic and that its differential $d\mathcal P^p$ still factors through a contraction morphism
\begin{eqnarray} \label{per diff}
i:H^1\left(X,\mathscr T_X\right)\longrightarrow\bigoplus_k\mathrm{Hom}\left(F^pH^k\left(X,\mathbb C\right),\frac{H^k\left(X,\mathbb C\right)}{F^pH^k\left(X,\mathbb C\right)}\right).
\end{eqnarray}

\section{The Period Map as a Morphism of Deformation Functors}

The work of Griffiths which has been described in Section $1$ relates deformations of a complex smooth projective variety (or more generally complex K\" ahler manifold) to variations of its Hodge structures. Unfortunately the local period map \eqref{G lpm} is not really a morphism of deformation theories, as it depends on a given deformation of a complex variety $X$, nonetheless its differential \eqref{per diff} is very {} ``deformation-theoretic'' in nature, as it relates the space $H^1\left(X,\mathscr T_X\right)$, i.e. the tangent space to the deformation functor parametrising all deformations of $X$, to another cohomological invariant which depends only on $X$ rather than the special Kuranishi family over $X$ determining map \eqref{G lpm}. Observations like these led Fiorenza and Manetti to believe that Griffiths period map could be described as a morphism of deformation functors (in the sense of Schlessinger) whose induced tangent mapping coincided with map \eqref{per diff}.
\subsection{Deformations of $k$-Schemes}
Let $k$ be any (non-necessarily algebraically closed) field of characteristic $0$ and consider a smooth proper scheme $X$ of dimension $d$: these assumptions over $X$ just algebraically resemble the analytic framework in which Griffiths studied map \eqref{G lpm}, while the fact that the theory we are about to summarise works for any field of characteristic $0$ is a consequence of Deligne's views on Hodge Theory (for more details see \cite{Del1}, \cite{Del2} and \cite{Del3}). Notice also that by \cite{Del1} Theorem 5.5 the Hodge-to-De Rham spectral sequence of the scheme $X$ degenerates at its first page: such a property will be used several times in this paper. \\
Recall that the \emph{functor of deformations of $X$} is the functor of Artin rings
\begin{eqnarray} \label{Def_X}
\mathrm{Def}_X:&\mathfrak{Art}_k&\xrightarrow{\hspace*{2cm}}\mathfrak{Set} \nonumber \\
&A&\mapsto\frac{\left\{\text{deformations of }X\text{ over }A\right\}}{\text{isomorphism}}
\end{eqnarray}
where a deformation\footnote{From now on by deformation we will always mean infinitesimal deformation, i.e. a deformation over an Artinian base.} of $X$ over $A$ is a Cartesian diagram in $\mathfrak{Sch}_k$ 
\begin{equation*}
\xymatrix{\ar@{} |{\Box}[dr]X\ar@{^{(}->}[r]^i\ar[d] & \mathcal X\ar[d]^p \\
\mathrm{Spec}\left(k\right)\ar[r] & \mathrm{Spec}\left(A\right)}
\end{equation*}
with $i$ a closed immersion and $p$ flat and proper; equivalently a deformation of $X$ over $A$ can be viewed as a morphism of sheaves of $A$-algebras $\mathscr O_A\rightarrow\mathscr O_X$ such that $\mathscr O_A$ is flat over $A$ and $\mathscr O_A\otimes_Ak\simeq \mathscr O_X$. Of course, two $A$-deformations $\mathcal X_1\rightarrow\mathrm{Spec}\left(A\right)$ and $\mathcal X_2\rightarrow\mathrm{Spec}\left(A\right)$ of $X$ are said to be isomorphic if there is an isomorphism $\mathcal X_1\tilde{\longrightarrow}\mathcal X_2$ of schemes over $A$ inducing the identity on $X$: it is well-known that functor \eqref{Def_X} is a deformation functor in the sense of Schlessinger (see \cite{Man2} or \cite{Schl} for a definition). \\
Now let $\left(\mathfrak l,\langle\cdots\rangle_n\right)_{n>0}$ be a $L_{\infty}$-algebra over $k$ (see \cite{Man2} for a definition) and recall that the \emph{deformation functor associated to $\left(\mathfrak l,\langle\cdots\rangle_n\right)_{n>0}$} is defined to be
\begin{eqnarray*}
\mathrm{Def}_{\mathfrak l}:&\mathfrak{Art}_k&\xrightarrow{\hspace*{1.5cm}}\mathfrak{Set} \\
&A&\mapsto\frac{\mathrm{MC}_{\mathfrak l}\left(A\right)}{\text{homotopy equivalence}}
\end{eqnarray*}
where
\begin{equation*}
\mathrm{MC}_{\mathfrak l}\left(A\right):=\left\{x\in\mathfrak l^0\left[1\right]\otimes\mathfrak m_A\text{ s.t. }\sum_{n\geq 1}\frac{\left\langle x^{\odot n}\right\rangle_n}{n!}=0\right\}
\end{equation*}
is the set of solutions of the \emph{Maurer-Cartan equation} and two elements $x_0,x_1\in\mathrm{MC}_{\mathfrak l}\left(A\right)$ are said to be \emph{homotopy equivalent} if there exists a {} ``path'' $x\left(t,dt\right)\in\mathrm{MC}_{\mathfrak l\left[t,dt\right]}\left(A\right)$ such that $x\left(0\right)=x_0$ and $x\left(1\right)=x_1$; again, it is not hard to verify that $\mathrm{Def}_{\mathfrak l}$ is a deformation functor in the sense of Schlessinger. Notice that, if the higher products $\langle\cdots\rangle_n=0$ for all $n\geq3$, i.e. if the $L_{\infty}$-algebra is actually a differential graded Lie algebra (see \cite{dN1} or \cite{Man2} for a definition), we recover the more classical notion of deformation functor associated to a dgla. \\ 
A fundamental fact in Deformation Theory -- essentially due to Kodaira, Kuranishi and Spencer and developed in many ways by several other authors -- states that the functor of deformations $\mathrm{Def}_X$ associated to a scheme $X$ which satisfies the above conditions is isomorphic to the deformation functor associated to the \emph{Kodaira-Spencer dgla} of $X$, which is defined to be the differential graded Lie algebra $\left(KS_X,\left[-,-\right],D\right)$ where
\begin{eqnarray} \label{KS Lie struct}
&KS_X:=\mathbb R\Gamma\left(X,\mathscr T_X\right)\simeq\Gamma\left(X,\mathscr A^{0,*}_X\left(\mathscr T_X\right)\right)& \nonumber \\
&\left[fd\bar z_I\frac{\partial}{\partial z_i},gd\bar z_J\frac{\partial}{\partial z_j}\right]:=d\bar z_I\wedge d\bar z_J\left(f\frac{\partial g}{\partial z_i}\frac{\partial}{\partial z_j}-g\frac{\partial f}{\partial z_j}\frac{\partial}{\partial z_i}\right).& \nonumber \\
&D\left(\omega\frac{\partial}{\partial z_i}\right):=-\bar{\partial}\left(\omega\right)\frac{\partial}{\partial z_i}&
\end{eqnarray}
\begin{war} \label{KS resol}
In this paper the algebra $KS_X$ will always correspond to the specific resolution $\Gamma\left(X,\mathscr A^{0,*}_X\left(\mathscr T_X\right)\right)$ computing $\mathbb R\Gamma\left(X,\mathscr T_X\right)$, equipped with the Lie structure \eqref{KS Lie struct}. This becomes very relevant in comparisons with the work of Fiorenza, Manetti and Martinengo.
\end{war}
Now consider the natural transformation
\begin{eqnarray} \label{Def_X=Def_KS}
\mathscr O:\mathrm{Def}_{KS_X}&\xrightarrow{\hspace*{1cm}}&\quad\mathrm{Def}_X \nonumber \\
\forall A\in\mathfrak{Art}_k\quad\mathrm{Def}_{KS_X}\left(A\right)\ni\xi\qquad&\longmapsto&\left(\mathscr O_{\xi}\rightarrow\mathscr O_X\right)\in\mathrm{Def}_X\left(A\right).
\end{eqnarray}
where for all open $U\subseteq X$
\begin{equation*}
\mathscr O_{\xi}\left(U\right):=\left\{f\in \mathscr A^{0,0}_X\left(U\right)\otimes A\text{ s.t. }\bar{\partial}f=\xi\lrcorner\partial f\right\}
\end{equation*}
and the map $\mathscr O_{\xi}\rightarrow\mathscr O_X$ is induced by the projection $\mathscr A^{0,0}_X\otimes A\rightarrow\mathscr A^{0,0}_X$.
\begin{thm} \emph{(Kodaira-Spencer, Kuranishi, [...])} \label{Donatella}
In the above notations, map \eqref{Def_X=Def_KS} is an isomorphism of deformation functors.
\end{thm}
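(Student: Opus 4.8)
The plan is to show that the natural transformation $\mathscr O$ is bijective on $A$-points for every $A\in\mathfrak{Art}_k$, naturally in $A$; as $\mathrm{Def}_{KS_X}$ and $\mathrm{Def}_X$ are both Schlessinger deformation functors, this is all that is needed. The argument has three parts --- well-definedness of $\mathscr O$, injectivity and surjectivity --- the latter two carried out by induction along principal small extensions $0\to J\to A'\to A\to 0$ with $J\cdot\mathfrak m_{A'}=0$.

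First I would check that $\mathscr O$ is well defined. For $\xi\in\mathrm{MC}_{KS_X}(A)$ a direct Cartan-calculus computation shows that the twisted operator $\bar{\partial}_\xi:=\bar{\partial}-i_\xi\circ\partial$ on $\mathscr A^{0,*}_X\otimes A$ satisfies $\bar{\partial}_\xi^2=0$ if and only if $\xi$ solves the Maurer--Cartan equation $\bar{\partial}\xi+\tfrac12[\xi,\xi]=0$ of $KS_X$ (only the quadratic term appears, the higher brackets vanishing for degree reasons). Hence $\mathscr O_\xi=\ker\big(\bar{\partial}_\xi\colon\mathscr A^{0,0}_X\otimes A\to\mathscr A^{0,1}_X\otimes A\big)$ is a sheaf of $A$-algebras, and one shows that $\big(\mathscr A^{0,*}_X\otimes A,\bar{\partial}_\xi\big)$ is a soft resolution of it, that $\mathscr O_\xi$ is flat over $A$ and that $\mathscr O_\xi\otimes_A k\simeq\mathscr O_X$, arguing by induction on the length of $A$: along a small extension $\bar{\partial}_\xi$ restricts on $\mathscr A^{0,*}_X\otimes J$ to $\bar{\partial}\otimes\mathrm{id}$ (because $i_\xi\partial$ there lands in $\mathscr A^{0,*}_X\otimes\mathfrak m_{A'}J=0$), so the resulting short exact sequence of complexes and its long exact cohomology sequence yield the flatness and the reduction in the inductive step. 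The same bookkeeping shows that $\mathscr O_\xi$ depends only on the homotopy class of $\xi$: over an Artin ring homotopy equivalence coincides with gauge equivalence, and if $\xi_1$ is the gauge transform of $\xi_0$ by $a\in\Gamma(X,\mathscr A^{0,0}_X(\mathscr T_X))\otimes\mathfrak m_A$, then conjugation by the unipotent automorphism $e^{a}$ of $\mathscr A^{0,0}_X\otimes A$ intertwines $\bar{\partial}_{\xi_0}$ and $\bar{\partial}_{\xi_1}$ and restricts to an isomorphism $\mathscr O_{\xi_0}\xrightarrow{\sim}\mathscr O_{\xi_1}$ of $A$-deformations. Functoriality in $A$ is clear from the construction.

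For injectivity, an isomorphism $\phi\colon\mathscr O_{\xi_0}\xrightarrow{\sim}\mathscr O_{\xi_1}$ of $A$-deformations extends, by softness of the $\mathscr A^{0,p}_X$, to an automorphism of the sheaf of $A$-algebras $\mathscr A^{0,0}_X\otimes A$ inducing the identity modulo $\mathfrak m_A$ and intertwining $\bar{\partial}_{\xi_0}$ with $\bar{\partial}_{\xi_1}$; since such automorphisms form a pro-unipotent group, a descending induction on the powers of $\mathfrak m_A$ identifies it with $e^{a}$ for a unique $a\in\Gamma(X,\mathscr A^{0,0}_X(\mathscr T_X))\otimes\mathfrak m_A$, and the intertwining condition becomes precisely the gauge relation between $\xi_0$ and $\xi_1$, so they are homotopy equivalent. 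For surjectivity I would again induct along $0\to J\to A'\to A\to 0$: given a deformation $\mathscr O_{A'}\to\mathscr O_X$ over $A'$ whose restriction over $A$ has already been identified with some $\mathscr O_\xi$, flatness of $\mathscr O_{A'}$ together with the softness of $\mathscr A^{0,*}_X$ lets one realise $\mathscr O_{A'}$ as the kernel of an operator $\bar{\partial}_{\tilde\xi}=\bar{\partial}-i_{\tilde\xi}\partial$ on $\mathscr A^{0,0}_X\otimes A'$ for some $\tilde\xi\in\mathscr A^{0,1}_X(\mathscr T_X)\otimes\mathfrak m_{A'}$ lifting $\xi$; the identity $\bar{\partial}_{\tilde\xi}^2=0$ is then the Maurer--Cartan equation for $\tilde\xi$, and $\mathscr O_{\tilde\xi}\simeq\mathscr O_{A'}$ by construction. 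Starting the induction from $\xi=0$ over $k$ gives the desired preimage, and together with the two previous steps this shows that $\mathscr O$ is an isomorphism of deformation functors.

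I expect the \emph{surjectivity} step to be the main obstacle: extracting a Maurer--Cartan datum from an abstract flat deformation amounts to first trivialising the underlying sheaf of modules and then recognising the sheaf-of-algebras structure as a twist of $\bar{\partial}$ by a $\mathscr T_X$-valued $(0,1)$-form, which is the algebraic shadow of the Dolbeault/Newlander--Nirenberg description of (almost) complex structures; over a general field of characteristic $0$ this must be performed with the fixed resolution $\mathscr A^{0,*}_X$ of Warning~\ref{KS resol}, controlling flatness at each small extension and checking that the geometric and dgla-theoretic obstruction classes agree. A reader wanting full details may instead appeal to the sources cited in the statement.
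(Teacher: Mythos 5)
The paper gives no argument for this theorem at all: its ``proof'' is a pointer to the literature (Iacono's thesis, Theorem II.7.3, and \cite{Iac2} Theorem 3.4). Your outline is precisely the classical Kodaira--Spencer/Kuranishi strategy that those references implement -- check that $\bar{\partial}_\xi:=\bar{\partial}-i_\xi\partial$ squares to zero exactly on Maurer--Cartan elements, verify flatness and gauge-invariance of $\mathscr O_\xi$ by induction along small extensions, then prove bijectivity on $A$-points -- so in spirit you are reconstructing the content of the cited sources rather than diverging from the paper.

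However, the two central verifications in your sketch are asserted exactly where the real work lies. For surjectivity you say that flatness of $\mathscr O_{A'}$ together with softness of $\mathscr A^{0,*}_X$ ``lets one realise'' $\mathscr O_{A'}$ as $\ker\bigl(\bar{\partial}-i_{\tilde\xi}\partial\bigr)$ for some lift $\tilde\xi$ of $\xi$: this is the heart of the theorem (the infinitesimal Newlander--Nirenberg statement that a flat Artinian deformation of a smooth scheme is locally trivial, followed by a patching argument identifying the glued object with a twist of $\bar{\partial}$ by a $\mathscr T_X$-valued $(0,1)$-form), and you acknowledge it only by deferring to the references, so the proposal does not actually establish the statement. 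The injectivity step has a genuine soft spot as well: softness of the sheaves $\mathscr A^{0,p}_X$ lets you extend morphisms of sheaves of modules, but it does not by itself produce an extension of the algebra isomorphism $\phi:\mathscr O_{\xi_0}\rightarrow\mathscr O_{\xi_1}$ to an $A$-algebra automorphism of $\mathscr A^{0,0}_X\otimes A$ intertwining $\bar{\partial}_{\xi_0}$ and $\bar{\partial}_{\xi_1}$; and even granting such an extension, an algebra automorphism congruent to the identity modulo $\mathfrak m_A$ is the exponential of an arbitrary $\mathfrak m_A$-valued derivation of $\mathscr A^{0,0}_X\otimes A$, i.e.\ of a smooth vector field with both $(1,0)$ and $(0,1)$ components, not automatically of an element $a\in\Gamma\left(X,\mathscr A^{0,0}_X\left(\mathscr T_X\right)\right)\otimes\mathfrak m_A$ of the Kodaira--Spencer gauge group. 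Reducing to the $(1,0)$ part (or avoiding the global extension altogether by working on a trivialising cover and comparing \v{C}ech-theoretically, as the cited proofs do) requires a genuine argument that is missing here. So the route is the right one, but as written the proof has placeholders at its two decisive steps.
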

\begin{proof}
There is a variety of different proofs of this result in the literature: we refer to \cite{Iac1} Theorem II.7.3 for a very detailed algebraic one; see also \cite{Iac2} Theorem 3.4.
\end{proof}
\subsection{Mapping Cones and Deformations of Filtered Complexes}
The functor $\mathrm{Def}_X$ is the most natural candidate for the domain of a purely {}``deformation-theoretic'' version of Griffiths period map; now we wish to understand what the codomain of such a morphism should be, i.e. we seek a deformation functor which parametrises variations of Hodge structures over $X$. \\ 
Let $\left(V,d\right)$ be a differential graded $k$-vector space and $\left(W,d\right)$ a subcomplex of its; for any $A\in\mathfrak{Art}_k$, consider the groups\footnote{In this section, by a slight abuse of notation, the symbol $d$ may indifferently denote the differential of the complex $V$, the differential of the twisted complex $V\otimes A$ and the differential of the endomorphism complex $\mathrm{End}\left(\left(V,d\right)\right)$.}
\begin{eqnarray*}
\mathrm{Aut}^V\left(A\right)&:=&\left\{f\in\mathrm{Hom}^0_A\left(V\otimes A,V\otimes A\right)\text{ s.t. } f\equiv\mathrm{Id}_{\left(V,d\right)}\;\left(\mathrm{mod}\;\mathfrak m_A\right)\right\} \\
\mathrm{Aut}^{\left(V,d\right)}\left(A\right)&:=&\left\{f\in\mathrm{Aut}^V\left(A\right)\text{ s.t. }fd=df\right\} \\
\mathrm{Aut}^{W,V}\left(A\right)&:=&\left\{f\in\mathrm{Aut}^V\left(A\right)\text{ s.t. }f\left(W\otimes A\right)=W\otimes A\right\} \\
\tilde{\mathrm{Aut}}^{\left(V,d\right)}\left(A\right)&:=&\left\{f\in\mathrm{Aut}^{V,d}\left(A\right)\text{ s.t. }H^*\left(f\right)\text{ is the identity on }H^*\left(V\otimes A,d\right)\right\}
\end{eqnarray*} 
and define the functor of deformations of $\left(W,d\right)$ inside $\left(V,d\right)$ to be the functor of Artin rings
\begin{eqnarray} \label{Grass func}
\mathrm{Grass}_{W,V}:&\mathfrak{Art}_k&\xrightarrow{\hspace*{3cm}}\mathfrak{Set} \nonumber \\
&A&\mapsto\frac{\left\{f\in\mathrm{Aut}^V\left(A\right)\text{ s.t. }df\left(W\otimes A\right)\subseteq f\left(W\otimes A\right)\right\}}{\tilde{\mathrm{Aut}}^{\left(V,d\right)}\left(A\right)\times\mathrm{Aut}^{W,V}\left(A\right)}.
\end{eqnarray}
\begin{rem}
Formula \eqref{Grass func} is the original definition of the functor of deformations of the subcomplex $\left(W,d\right)$ as we find it in \cite{FMan1}; although it is quite elegant, it may not seem very intuitive, as there is no explicit reference to what a deformation of $\left(W,d\right)$ over a local Artinian $k$-algebra $A$ should be. Anyway a more careful look at it immediately shows that a deformation of $\left(W,d\right)$ over $A$ inside $\left(V,d\right)$ is a complex of free $A$-modules $\left(V\otimes A,d_A\right)$ such that its residue modulo $\mathfrak m_A$ equals $\left(V,d\right)$ and $d_A\left(W\otimes A\right)\subseteq W\otimes A$ (this is exactly what the {} ``numerator'' in formula \eqref{Grass func} parametrises); on the other hand two such deformations $\left(V\otimes A,d_A\right)$ and $\left(V\otimes A,d_A'\right)$ are isomorphic if there exists an isomorphism of cochain complexes $\varphi$ between them such that $\varphi\left(W\otimes A,d_A\right)=\varphi\left(W\otimes A,d_A'\right)$ and $H^i\left(\varphi\right)=\mathrm{Id}_{H^i\left(V\otimes A,d\right)}$ for all $i$ (this is exactly what the {} ``denominator'' in formula \eqref{Grass func} parametrises).
\end{rem}
Now consider the graded vector spaces
\begin{eqnarray*}
\mathrm{End}^*\left(\left(V,d\right)\right)&:=&\mathrm{Hom}^*\left(\left(V,d\right),\left(V,d\right)\right) \\
\mathrm{End}^{W}\left(\left(V,d\right)\right)&:=&\left\{f\in\mathrm{End}^*\left(\left(V,d\right)\right)\text{ s.t. }f\left(W\right)\subseteq W\right\}.
\end{eqnarray*}
They are endowed with natural structures of differential graded Lie algebras and there is an obvious inclusion
\begin{equation*}
\chi_{W,V}:\mathrm{End}^W\left(\left(V,d\right)\right)\hookrightarrow\mathrm{End}^*\left(\left(V,d\right)\right)
\end{equation*}
which is a morphism of dgla's; recall also that the \emph{mapping cone} $\left(C_{\chi_{W,V}},\delta\right)$ of the morphism $\chi_{W,V}$ is defined to be its homotopy cokernel, i.e. the complex 
\begin{equation*}
\underset{\longleftarrow}{\mathrm{holim}}\left(\mathrm{End}^W\left(\left(V,d\right)\right)\doublerightarrow{\chi_{W,V}}{0}\mathrm{End}^*\left(\left(V,d\right)\right)\right).
\end{equation*} 
More concretely, the mapping cone is given by the formulae 
\begin{eqnarray} \label{cone formula}
C_{\chi_{W,V}}^i&:=&{\mathrm{End}^{W}\left(\left(V,d\right)\right)}^i\oplus{\mathrm{End}^{i-1}\left(\left(V,d\right)\right)} \nonumber\\
\delta\left(\left(f,g\right)\right)&:=&\left(df,\chi\left(f\right)-dg\right).
\end{eqnarray}
\begin{prop} \label{mapping cone} \emph{(Fiorenza-Manetti)}
In the above notations, there is a canonical $L_{\infty}$-structure on the mapping cone $C_{\chi_{W,V}}$.
\end{prop}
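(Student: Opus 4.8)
The plan is to transfer the $L_\infty$-structure along the homotopy equivalence between the mapping cone $C_{\chi_{W,V}}$ and a differential graded Lie algebra to which it is quasi-isomorphic, using the homotopy transfer theorem. More precisely, the cone $C_{\chi_{W,V}}$ as a complex is the homotopy cokernel of $\chi_{W,V}\colon \mathrm{End}^W((V,d))\hookrightarrow\mathrm{End}^*((V,d))$, and since $\chi_{W,V}$ is an injection of dgla's with degreewise complementary image, one can identify $C_{\chi_{W,V}}$ up to quasi-isomorphism with an honest sub-object or quotient carrying a genuine graded Lie bracket — one natural candidate being to replace $\mathrm{End}^W$ by its image and regard the cone as measuring the failure of $\mathrm{End}^W$ to be an ideal. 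I would first make this identification explicit, writing down a deformation retract $(C_{\chi_{W,V}}, \delta)\rightleftarrows (L, d_L)$ where $(L,d_L)$ is a complex equipped with a strict dgla structure, together with the homotopy $h$ satisfying the side conditions of a special deformation retract (SDR).

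Next I would invoke the standard homotopy transfer / Homotopy Transfer Theorem for $L_\infty$-algebras (as in Fiorenza--Manetti's treatment of the Thom--Whitney or Deligne complexes, or via the explicit sum-over-trees formulas): given an SDR from a dgla to a complex, the target inherits an $L_\infty$-structure with the transferred brackets $\langle -\rangle_n$ expressed as sums over binary trees with internal edges decorated by the homotopy $h$, leaves by the inclusion, and the root by the projection, and the dgla bracket at the vertices. Applying this in the reverse direction — transferring \emph{from} the dgla side \emph{to} $C_{\chi_{W,V}}$ — produces the desired canonical $L_\infty$-structure on the mapping cone, and the quadratic bracket $\langle-\rangle_2$ will recover the naive bracket one would guess from the componentwise description in \eqref{cone formula}, namely $\langle(f_1,g_1),(f_2,g_2)\rangle_2 = ([f_1,f_2], [f_1,g_2]\pm[g_1,f_2])$, while the higher brackets $\langle-\rangle_n$ for $n\ge 3$ absorb the associators coming from the homotopy $h$.

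I would then verify that the transferred structure is independent (up to $L_\infty$-isomorphism) of the chosen SDR, which is automatic from the general theory, and check compatibility with the dgla maps $\mathrm{End}^W((V,d))\to C_{\chi_{W,V}}$ and $C_{\chi_{W,V}}\to\mathrm{End}^*((V,d))[1]$ so that the cone sits in an $L_\infty$ fibre sequence — this is what makes the construction ``canonical'' in the sense needed for the subsequent identification $\mathrm{Grass}_{W,V}\simeq\mathrm{Def}_{C_{\chi}}$. An alternative, perhaps cleaner, route is to observe that the mapping cone of a morphism of dgla's always carries an $L_\infty$-structure by a direct construction: write $\chi=\chi_{W,V}$, set $C^i = \mathrm{End}^W{}^i \oplus \mathrm{End}^{i-1}$, keep $\langle-\rangle_2$ as above, and define $\langle-\rangle_n$ for $n\ge 3$ to vanish except for the term feeding the $\mathrm{End}$-component, which is forced by the $L_\infty$-relations and the fact that $\chi$ is a strict dgla morphism; one then checks the generalized Jacobi identities hold because the obstructions telescope.

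The main obstacle I anticipate is bookkeeping the signs and the combinatorics of the higher brackets so that all the $L_\infty$ (generalized Jacobi) identities $\sum \pm \langle\langle -\rangle\rangle = 0$ are satisfied simultaneously; the conceptual content is light once one commits to homotopy transfer, but getting the Koszul signs right in the tree formulas, and confirming that only finitely many brackets are nonzero (or that the infinitely many are controlled), is where the real work lies. A secondary subtlety is making sure the chosen homotopy retract interacts well with the \emph{filtered} structure that will matter later — i.e. that when $W = F^p H^*(X)$ and $V = H^*(X)$ the construction respects the grading by the filtration — but for the bare statement of Proposition~\ref{mapping cone} this is not needed, and I would defer it.
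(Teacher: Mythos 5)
Your overall strategy --- produce the $L_\infty$-structure by homotopy transfer from a strict dgla quasi-isomorphic to the cone --- is exactly the first of the two Fiorenza--Manetti proofs that the paper cites (\cite{FMan2}, Sections 4 and 5), so the approach is the right one. The genuine gap is in the key input to that argument: the strict dgla model you propose to transfer from is misidentified. Since $\chi_{W,V}$ is injective, the cone $C_{\chi_{W,V}}$ is indeed quasi-isomorphic as a complex to the shifted quotient $\bigl(\mathrm{End}^*\left(\left(V,d\right)\right)/\mathrm{End}^W\left(\left(V,d\right)\right)\bigr)\left[-1\right]$, but this quotient carries \emph{no} induced Lie bracket precisely because $\mathrm{End}^W\left(\left(V,d\right)\right)$ is not an ideal (it only becomes a dgla, with trivial bracket, in the quasi-abelian/formal situation discussed later in the paper); nor is there an evident sub-dgla model. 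The model actually used for the transfer is the path-space (Thom--Whitney type) dgla of the homotopy fibre, e.g.
\begin{equation*}
\Bigl\{\left(f,p\left(t,dt\right)\right)\in\mathrm{End}^W\left(\left(V,d\right)\right)\times\mathrm{End}^*\left(\left(V,d\right)\right)\otimes k\left[t,dt\right]\ \text{s.t.}\ p\left(0\right)=0,\ p\left(1\right)=\chi_{W,V}\left(f\right)\Bigr\},
\end{equation*}
together with the explicit contraction onto $C_{\chi_{W,V}}$ given by evaluation and integration over $\left[0,1\right]$; without exhibiting this dgla and the side-condition homotopy, the transfer argument does not get off the ground.

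Your fallback ``direct'' construction is also incorrect as stated. The binary bracket $\left(\left(f_1,g_1\right),\left(f_2,g_2\right)\right)\mapsto\left(\left[f_1,f_2\right],\left[\chi f_1,g_2\right]\pm\left[g_1,\chi f_2\right]\right)$ without the factors $\tfrac12$ already violates the first nontrivial $L_\infty$-relation: computing $\delta q_2-q_2\left(\delta\otimes 1\pm 1\otimes\delta\right)$ with the differential \eqref{cone formula} produces the term $\left[\chi f_1,\chi f_2\right]=\chi\left(\left[f_1,f_2\right]\right)$ twice on one side and once on the other, and only the coefficient $\tfrac12$ repairs this. More importantly, the higher brackets are neither zero nor ``forced by telescoping'': the transferred $q_n$, $n\geq 2$, are the explicit Bernoulli-number formulas $\pm\frac{B_{n-1}}{\left(n-1\right)!}\sum_{\sigma}\pm\left[g_{\sigma\left(1\right)},\left[\dots,\left[g_{\sigma\left(n-1\right)},\chi\left(f_{\sigma\left(n\right)}\right)\right]\dots\right]\right]$ landing in the shifted $\mathrm{End}^*$-component, and verifying the generalized Jacobi identities for them is the substantive content of \cite{FMan2} Section 5, not a formality. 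So the proposal needs, at minimum, the correct path-space dgla with its SDR for the transfer route, or the actual formulas and their verification for the direct route.
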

\begin{proof}
See \cite{FMan2} Section 4 and Section 5.
\end{proof}
\begin{rem}
Fiorenza and Manetti gave two different proofs of Proposition \ref{mapping cone}: the first one is a very elegant but non-constructive proof based on the \emph{Homotopy Transfer Theorem} (see \cite{KS} and \cite{LV}, while \cite{Va} provides a gentler introduction), while the second proof relies on a careful explicit description of all the higher products defining the $L_{\infty}$-structure of $C_{\chi_{W,V}}$; anyway, we are not reporting such formulae since they are not really needed for the sake of this paper.
\end{rem}
Consider the natural transformation
\begin{eqnarray*} \label{Def_C=Grass}
\Psi_{\chi_{W,V}}:&\mathrm{Def}_{C_{\chi_{W,V}}}&\xrightarrow{\hspace*{1cm}}\mathrm{Grass}_{W,V} \\
\forall A\in\mathfrak{Art}_k\qquad\mathrm{Def}_{C_{\chi_{W,V}}}\left(A\right)\ni&\eta&\longmapsto \left(\eta\left(W\otimes A\right),d\right)\in\mathrm{Grass}_{W,V}\left(A\right).
\end{eqnarray*}
\begin{thm} \label{Grass=Def_C} \emph{(Fiorenza-Manetti)}
In the above notations, map \eqref{Def_C=Grass} is an isomorphism of deformation functors; in particular $\mathrm{Grass}_{W,V}$ is a deformation functor in the sense of Schlessinger.
\end{thm}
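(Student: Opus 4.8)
The plan is to make the natural transformation $\Psi_{\chi_{W,V}}$ completely explicit at the level of Maurer--Cartan elements, and then to verify that it becomes a bijection after quotienting by gauge equivalence. The conceptual point driving everything is that, by construction, $C_{\chi_{W,V}}$ is the homotopy fibre over $0$ of the dgla morphism $\chi_{W,V}\colon\mathrm{End}^W\left(\left(V,d\right)\right)\hookrightarrow\mathrm{End}^*\left(\left(V,d\right)\right)$; consequently its Maurer--Cartan functor (equivalently its Deligne groupoid) is the homotopy fibre of the Maurer--Cartan functors of the two endomorphism dgla's, and the whole statement reduces to comparing a $1$-truncated homotopy fibre product of Deligne groupoids with the double quotient appearing in the definition \eqref{Grass func} of $\mathrm{Grass}_{W,V}$.

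Concretely, I would first show that for $A\in\mathfrak{Art}_k$ a solution of the Maurer--Cartan equation of the $L_\infty$-algebra $C_{\chi_{W,V}}$ of Proposition~\ref{mapping cone} is the same datum as a pair $\left(\alpha,\phi\right)$, where $\alpha\in\mathrm{End}^W\left(\left(V,d\right)\right)^1\otimes\mathfrak m_A$ satisfies the Maurer--Cartan equation of $\mathrm{End}^W\left(\left(V,d\right)\right)$ --- so that $d_\alpha:=d+\alpha$ is a differential on $V\otimes A$ with $d_\alpha\left(W\otimes A\right)\subseteq W\otimes A$ --- and $\phi=e^{e}\in\mathrm{Aut}^V\left(A\right)$ (for some $e\in\mathrm{End}^{0}\left(\left(V,d\right)\right)\otimes\mathfrak m_A$) is a gauge conjugating $d_\alpha$ to $d$, i.e. $\phi\,d_\alpha\,\phi^{-1}=d$. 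Sending such a pair to the subcomplex $W_A:=\phi\left(W\otimes A\right)$ of $\left(V\otimes A,d\right)$ --- which is $d$-stable precisely because $\phi d_\alpha\phi^{-1}=d$ and $d_\alpha$ preserves $W\otimes A$ --- produces a representative of a class in $\mathrm{Grass}_{W,V}\left(A\right)$, and one checks this recovers $\Psi_{\chi_{W,V}}$. For well-definedness one must understand how the gauge action of $C_{\chi_{W,V}}^0\otimes\mathfrak m_A=\left(\mathrm{End}^W\right)^0\otimes\mathfrak m_A\oplus\mathrm{End}^{-1}\otimes\mathfrak m_A$ acts on $\left(\alpha,\phi\right)$: the first summand moves $\phi$ inside the coset $\phi\,\mathrm{Aut}^{W,V}\left(A\right)$, leaving $W_A$ unchanged, while the second summand replaces $\phi$ by $\psi\phi$ with $\psi$ ranging over automorphisms of $\left(V\otimes A,d\right)$ chain-homotopic to the identity. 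The one genuinely computational input enters here: since $\left(V,d\right)$ is a complex of $k$-vector spaces it is formal, hence $V\otimes A$ splits as $\left(H^*\left(V\right)\otimes A,0\right)\oplus\left(\text{contractible complex of free }A\text{-modules}\right)$, so an automorphism of $\left(V\otimes A,d\right)$ is chain-homotopic to the identity exactly when it induces the identity on $H^*\left(V\otimes A,d\right)$; that is, the relevant $\psi$'s are precisely the elements of $\tilde{\mathrm{Aut}}^{\left(V,d\right)}\left(A\right)$, which matches the denominator of \eqref{Grass func}. Thus $\Psi_{\chi_{W,V}}$ is well defined and natural in $A$.

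For bijectivity I would argue directly. Surjectivity: given $f\in\mathrm{Aut}^V\left(A\right)$ with $df\left(W\otimes A\right)\subseteq f\left(W\otimes A\right)$, set $\alpha:=f^{-1}d\left(f\right)\in\mathrm{End}^1\left(\left(V,d\right)\right)\otimes\mathfrak m_A$ and $e:=\log f$; then $d+\alpha=f^{-1}df$ squares to zero and preserves $W\otimes A$, so $\alpha\in\mathrm{End}^W$ is Maurer--Cartan there, $f$ conjugates $d+\alpha$ to $d$, and hence $\left(\alpha,f\right)$ is a Maurer--Cartan element of $C_{\chi_{W,V}}$ mapping to the class of $f\left(W\otimes A\right)$. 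Injectivity: if $\phi\left(W\otimes A\right)$ and $\phi'\left(W\otimes A\right)$ are equivalent in $\mathrm{Grass}_{W,V}\left(A\right)$, write $\phi'=\psi\phi\beta^{-1}$ with $\psi\in\tilde{\mathrm{Aut}}^{\left(V,d\right)}\left(A\right)$ and $\beta\in\mathrm{Aut}^{W,V}\left(A\right)$; then the $\left(\mathrm{End}^W\right)^0$-part of the gauge accounts for $\beta$ and the $\mathrm{End}^{-1}$-part accounts for the homotopy $\psi\simeq\mathrm{id}$ (again using the splitting of $V\otimes A$ to present $\psi$ through coboundaries), so the corresponding Maurer--Cartan elements of $C_{\chi_{W,V}}$ are gauge equivalent. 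Finally, since $\mathrm{Def}_{\mathfrak l}$ is a deformation functor in the sense of Schlessinger for every $L_\infty$-algebra $\mathfrak l$, the isomorphism $\Psi_{\chi_{W,V}}$ transports this property to $\mathrm{Grass}_{W,V}$.

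The main obstacle is the identification carried out in the second paragraph: matching the Maurer--Cartan and gauge data of the $L_\infty$-cone --- whose higher brackets are genuinely intricate, occupying Sections~4--5 of \cite{FMan2} --- with the transparent pair $\left(\alpha,\phi\right)$ and with the explicit double quotient of \eqref{Grass func}. Pushing this through by brute force with the explicit bracket formulae is possible but unpleasant; the clean route, which I would prefer, is to invoke that the Maurer--Cartan simplicial set (equivalently the Deligne $\infty$-groupoid, in the sense of Hinich and Getzler) carries homotopy fibres of nilpotent $L_\infty$-algebras to homotopy fibres, which reduces the whole comparison to bookkeeping in a $1$-truncated homotopy fibre product of Deligne groupoids, together with the formality of $\left(V,d\right)$ used to pin down $\tilde{\mathrm{Aut}}^{\left(V,d\right)}$.
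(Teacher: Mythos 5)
Your argument is essentially the proof the paper points to: the paper gives no independent argument but simply cites \cite{FMan1} Proposition 9.2, and what you reconstruct --- Maurer--Cartan elements of $C_{\chi_{W,V}}$ as pairs $\left(\alpha,e^{a}\right)$ with $\alpha$ Maurer--Cartan in $\mathrm{End}^W\left(\left(V,d\right)\right)$ and $e^{a}$ a gauge trivialisation of $\chi\left(\alpha\right)$, the assignment $\left(\alpha,e^{a}\right)\mapsto e^{a}\left(W\otimes A\right)$, surjectivity via $\alpha:=f^{-1}df-d$, and the matching of the denominator of \eqref{Grass func} with $\exp\left(\left(\mathrm{End}^W\right)^0\otimes\mathfrak m_A\right)$ and the stabiliser-type automorphisms coming from the $\mathrm{End}^{-1}$ directions (your identification of $\tilde{\mathrm{Aut}}^{\left(V,d\right)}\left(A\right)$ via the splitting of $\left(V,d\right)$) --- is exactly Fiorenza--Manetti's route. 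The only step stated more loosely than needed is the upgrade from ``chain homotopic to the identity'' to ``lies in the subgroup generated by the exponentials $e^{dh+hd}$'', which requires the standard induction along powers of $\mathfrak m_A$ on top of your splitting argument; this is a routine completion rather than a change of strategy.
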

\begin{proof}
See \cite{FMan1} Proposition 9.2.
\end{proof}
\subsection{Cartan Homotopies and Period Maps}
The work of Fiorenza and Manetti, especially Theorem \ref{Grass=Def_C}, suggests that a good candidate for the codomain of a purely deformation-theoretic version of Griffiths $p$\textsuperscript{th} local period map should be the functor $\mathrm{Grass}_{F^pH^*\left(X,k\right),H^*\left(X,k\right)}$, where
\begin{equation*}
H^*\left(X,k\right):=\mathbb H^*\left(X,\Omega^*_{X/k}\right) 
\end{equation*}
is the algebraic De Rham cohomology of the scheme $X$ and $F^{\bullet}$ is the Hodge filtration over it. Now we are almost ready to describe the actual morphism that Fiorenza and Manetti constructed in order to translate Griffiths period map in terms of deformation functors.
\begin{defn} \label{Cart htpy} (Fiorenza-Manetti)
Let $\left(\mathfrak g,d,[-,-]\right)$ and $\left(\mathfrak l,d,[-,-]\right)$ be two differential graded Lie algebras over $k$; a linear map $i\in\mathrm{Hom}^{-1}\left(\mathfrak g,\mathfrak l\right)$ is said to be a \emph{Cartan homotopy} if 
\begin{equation*}
\forall a,b\in\mathfrak g\qquad\qquad i\left(\left[a,b\right]\right)=\left[i\left(a\right),di\left(b\right)\right]\quad\text{and}\quad\left[i\left(a\right),i\left(b\right)\right]=0.\footnote{Again, we are denoting by the same symbol the differential and the bracket of the dgla's $\mathfrak g$, $\mathfrak l$ and $\mathrm{Hom}^*\left(\mathfrak g,\mathfrak l\right)$.}
\end{equation*}
\end{defn}
\begin{rem} \label{rem Cart htpy}
The following facts directly follow from Definition \ref{Cart htpy}
\begin{enumerate}
\item The differential of a Cartan homotopy is a morphism of differential graded Lie algebras (i.e. it preserves grading and differentials); 
\item The notion of Cartan homotopy is stable under composition with a dgla map and under tensorisation with a differential graded commutative algebra; \item The notion of Cartan homotopy generalises to maps of sheaves of dgla's;
\item Let $i:\mathfrak g\rightarrow\mathfrak l\left[-1\right]$ be a Cartan homotopy and $l:\mathfrak g\rightarrow\mathfrak l$ its differential: $e^{i}$ is an homotopy between $l$ and the zero dgla morphism $0$.
\end{enumerate}
\end{rem}
\begin{example}
The contraction map associated to the scheme $X$ is a Cartan homotopy of sheaves of dgla's (see Section 1.2 for a definition in the context of complex manifolds), while its derived globalisation provides us with an honest Cartan homotopy of dgla's: the latter will be a key ingredient of this paper (see Section 3.2).
\end{example}
The reason why we are interested in Cartan homotopies is that they behave very well with respect to mapping cones.
\begin{prop} \emph{(Fiorenza-Manetti)}
In the notations of Definition \ref{Cart htpy}, let $l:=di$; then the linear map
\begin{eqnarray*}
\check i:&\mathfrak g&\xrightarrow{\hspace*{0.75cm}}C_{l} \\
&a&\longmapsto\left(a,i\left(a\right)\right)
\end{eqnarray*}
is a $L_{\infty}$-morphism; in particular it induces a morphism between the associated deformation functors.
\end{prop}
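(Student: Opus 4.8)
The plan is to prove the statement in two complementary ways: conceptually, via the universal property of the homotopy fibre, and concretely, by checking the $L_\infty$-morphism relations against Fiorenza--Manetti's explicit brackets on the cone.

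For the conceptual argument I would first observe that, by the formulae \eqref{cone formula}, $C_l$ is exactly the homotopy limit of the pair $l,0:\mathfrak g\rightrightarrows\mathfrak l$ (equivalently, the homotopy fibre of $l:=di$ over $0$), where $l$ is a genuine dgla morphism by Remark \ref{rem Cart htpy}(1). Hence an $L_\infty$-map $\mathfrak g\to C_l$ is the same datum as an $L_\infty$-map $\varphi:\mathfrak g\to\mathfrak g$ together with a homotopy between $l\circ\varphi$ and $0\circ\varphi=0$. Taking $\varphi=\mathrm{id}_{\mathfrak g}$, the needed homotopy between $l$ and the zero morphism is supplied by $e^{i}$, which makes sense and is coherent precisely because of the two identities of Definition \ref{Cart htpy}: the vanishing $[i(a),i(b)]=0$ is what permits the exponentiation, while $i([a,b])=[i(a),di(b)]$ is the compatibility with brackets (this is the content of Remark \ref{rem Cart htpy}(4)). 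The universal property then produces an $L_\infty$-morphism $\mathfrak g\to C_l$, and unwinding the construction of $e^{i}$ shows that its linear Taylor coefficient is $a\mapsto(a,i(a))=\check i(a)$, which is the assertion. The induced morphism of deformation functors is then immediate, since $\mathrm{Def}_{(-)}$ is functorial for $L_\infty$-morphisms.

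Concretely -- and this is the version I would actually write out, as it displays the role of the Cartan conditions most clearly -- I would check that $\check i$, together with all higher Taylor coefficients set to zero, satisfies the $L_\infty$-morphism equations relative to the $L_\infty$-structure of Proposition \ref{mapping cone} (see \cite{FMan2} for the explicit brackets, whose higher terms carry Bernoulli coefficients). The degree-one equation just says that $\check i$ is a chain map, $\delta\circ\check i=\check i\circ d$; by \eqref{cone formula} this is precisely $l=di+id$, i.e.\ $l$ is the differential of $i$. The degree-two equation reduces, the higher coefficients being zero, to $\check i([a,b])=q_2(\check i(a),\check i(b))$: its $\mathfrak g$-component is a tautology, and its $\mathfrak l$-component is exactly the Cartan identity $i([a,b])=[i(a),l(b)]$, once one uses graded antisymmetry to rewrite $q_2$ evaluated on cone elements of the special form $(a,i(a))$. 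For $n\ge3$ the equation becomes $q_n(\check i(a_1),\dots,\check i(a_n))=0$; evaluating $q_n$ on the elements $(a_j,i(a_j))$ and using repeatedly $i([a,b])=[i(a),l(b)]$ together with $[i(a),i(b)]=0$, every summand collapses to $i$ applied to an $(n-1)$-fold iterated bracket of the $a_j$, and the whole Bernoulli-weighted symmetrised sum becomes $i$ of an iterated Jacobiator in the honest dgla $\mathfrak g$, hence zero. The last clause again follows by functoriality of $A\mapsto\mathrm{MC}_{\mathfrak g}(A)/{\sim}$.

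The main obstacle is the sign and combinatorial bookkeeping in the concrete argument: one must pin down Fiorenza--Manetti's precise Koszul-sign conventions for the cone brackets (coming from the shift on $\mathfrak l$ and from the antisymmetrisation carrying the Bernoulli coefficients), and then verify the combinatorial identity that the resulting symmetrised sum of nested brackets is $i$ of an expression killed by antisymmetry and the Jacobi identity in $\mathfrak g$. If one wishes to sidestep this, the conceptual route is cleaner, the only genuine work there being to make precise in which model the element $e^{i}$ is a homotopy of $L_\infty$-morphisms (say, via the path object $\mathfrak l\otimes k[t,dt]$) and to check that the homotopy-limit description returns $\check i$ on the nose.
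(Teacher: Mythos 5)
Your proposal is sound, but note that the paper itself gives no argument for this statement: its ``proof'' is the citation to \cite{FMan1} Proposition 7.4, and your concrete route is essentially a reconstruction of that cited proof (a direct check of the $L_\infty$-morphism identities for the strict linear map against the explicit Bernoulli-number brackets of \cite{FMan2}, using the two Cartan identities of Definition \ref{Cart htpy}). Two small points of precision on that route: for $n\ge 3$ each summand evaluated on the elements $\left(a_j,i\left(a_j\right)\right)$ either contains two $i$'s, in which case it dies outright by $\left[i\left(a\right),i\left(b\right)\right]=0$ after one application of $i\left(\left[a,b\right]\right)=\left[i\left(a\right),l\left(b\right)\right]$, or contains exactly one $i$, in which case it reduces (using that $l$ is a dgla morphism) to $i$ of a right-nested bracket of the $a_j$; the resulting vanishing is not literally ``$i$ of a Jacobiator'' but the vanishing of the full Koszul-signed symmetrisation of an $n$-fold nested bracket for $n\ge 3$, which does follow from antisymmetry and Jacobi (for $n=3$ it is twice the Jacobiator; in general the sign/trivial isotypic piece of the multilinear Lie words is absent for $n\ge 3$), so your mechanism is correct but that combinatorial lemma should be stated and proved rather than waved at. Your conceptual route via the homotopy-fibre description $C_l\simeq\underset{\longleftarrow}{\mathrm{holim}}\left(\mathfrak g\doublerightarrow{l}{0}\mathfrak l\right)$ and Remark \ref{rem Cart htpy}(4) is a genuinely different and attractive argument, but as you acknowledge it only produces \emph{some} $L_\infty$-map to the fibre, determined up to homotopy; to obtain the literal statement that the strict linear map $\check i$ is an $L_\infty$-morphism into the specific model of Proposition \ref{mapping cone} you must trace the identity-plus-$e^{i}$ datum through the transfer quasi-isomorphism of \cite{FMan2}, which is real work (though for the ``in particular'' clause about deformation functors the up-to-homotopy version already suffices, since weakly equivalent $L_\infty$-maps induce the same morphism $\mathrm{Def}_{\mathfrak g}\rightarrow\mathrm{Def}_{C_l}$).
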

\begin{proof}
See \cite{FMan1} Proposition 7.4.
\end{proof}
Now, in the notations of Proposition \ref{mapping cone}, for all $p>0$ set
\begin{equation*} 
V:=\mathbb R\Gamma\left(X,\Omega^*_{X/k}\right)\qquad\qquad W:=F^p\mathbb R\Gamma\left(X,\Omega^*_{X/k}\right)\qquad\qquad\chi^p:=\chi_{V,W}
\end{equation*}
while denote by $i$ the contraction map associated to $X$ and by $l$ its differential, i.e. the Lie derivative.
\begin{war} \label{RGamma resol}
As we did in Warning \ref{KS resol} in the case of the Kodaira-Spencer dgla, we will always fix a specific choice of functor for $\mathbb R\Gamma\left(X,\Omega^*_{X/k}\right)$, i.e. the one given by the Dolbeaut resolution: in other words throughout the paper we will have
\begin{equation} \label{RGamma}
\mathbb R\Gamma\left(X,\Omega^*_{X/k}\right)\simeq\Gamma\left(X,\mathscr A_X^{*,*}\right).
\end{equation}
\end{war}
\begin{thm} \emph{(Fiorenza-Manetti)}
The linear map
\begin{eqnarray*}
\mathrm{fm}^p:&KS_X&\xrightarrow{\hspace*{0.75cm}}C_{\chi^p} \\
&\xi&\longmapsto\left(l_{\xi},i_{\xi}\right)
\end{eqnarray*}
is a $L_{\infty}$-morphism; in particular it induces a morphism of deformation functors
\begin{equation} \label{fm}
\mathrm{fm}^p:\mathrm{Def}_{KS_X}\xrightarrow{\hspace*{0.75cm}}\mathrm{Def}_{C_{\chi^p}}.\footnote{Here, by a slight abuse of notation, the symbol $\mathrm{fm}^p$ is denoting both the $L_{\infty}$-map and the induced morphism of deformation functors.}
\end{equation}
\end{thm}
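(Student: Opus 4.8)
The plan is to realise $\mathrm{fm}^p$ as the composite of the $L_\infty$-morphism attached to the Cartan homotopy $i$ with a morphism of mapping cones, and then to invoke functoriality of the Maurer--Cartan construction.

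First I would globalise. With the Dolbeault models fixed in Warnings \ref{KS resol} and \ref{RGamma resol}, the contraction $i$ of \eqref{contr map} yields an honest Cartan homotopy of dgla's $i\colon KS_X\to\mathrm{End}^*(V)[-1]$, where $V:=\mathbb R\Gamma(X,\Omega^*_{X/k})=\Gamma(X,\mathscr A_X^{*,*})$ by \eqref{RGamma}: contraction of a $\mathscr T_X$-valued $(0,*)$-form against a mixed form is a genuine linear operator, and the two identities of Definition \ref{Cart htpy} pass from the sheaf level (the Example following Remark \ref{rem Cart htpy}) to global sections by Remark \ref{rem Cart htpy}(2)--(3). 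Its differential $l:=di$ is then a strict morphism of dgla's by Remark \ref{rem Cart htpy}(1).

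The one delicate point is that $l$ takes values in the filtration-preserving subalgebra $\mathrm{End}^{F^p}(V)$, where $F^\bullet V=\bigoplus_{a\geq\bullet}\Gamma(X,\mathscr A_X^{a,*})$ is the Dolbeault incarnation of the Hodge filtration $F^\bullet\mathbb R\Gamma(X,\Omega^*_{X/k})$. At the level of sheaves this is clear, since $l_\xi$ is a degree-zero operator on $\Omega^*_X$; after resolving one must check that the spurious holomorphic-degree-lowering component of $l_\xi$ vanishes. Computing $l=di$ in $\mathrm{Hom}^*(KS_X,\mathrm{End}^*(V))$ gives $l_\xi=[D,i_\xi]+i_{-\bar\partial\xi}$, with $D=\partial+\bar\partial$ the total differential of $V$, and the Leibniz-type identity $[\bar\partial,i_\xi]=i_{\bar\partial\xi}$ (valid on local sections, as contraction with a local holomorphic vector field anticommutes with $\bar\partial$) makes the $\bar\partial$-contributions cancel, so that $l_\xi=\pm[\partial,i_\xi]$. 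Since contraction drops holomorphic degree by exactly one while $\partial$ raises it by one, $l_\xi(F^pV)\subseteq F^pV$; hence $l$ corestricts to a strict dgla morphism $\bar l\colon KS_X\to\mathrm{End}^{F^p}(V)$ with $\chi^p\circ\bar l=l$.

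It then remains to assemble the pieces. By the Cartan-homotopy proposition above (\cite{FMan1}, Proposition 7.4) applied to $i$, the assignment $\xi\mapsto(\xi,i_\xi)$ is an $L_\infty$-morphism $KS_X\to C_l$ into the cone of $l$. The morphism of arrows given by $\bar l$ on sources and $\mathrm{id}_{\mathrm{End}^*(V)}$ on targets (which commutes with $l$ and $\chi^p$ because $\chi^p\circ\bar l=l$) induces, by naturality of the construction of Proposition \ref{mapping cone} in the underlying diagram, a strict $L_\infty$-morphism $C_l\to C_{\chi^p}$, $(a,x)\mapsto(\bar l(a),x)$. Composing the two, $\xi\mapsto(l_\xi,i_\xi)=\mathrm{fm}^p$ is an $L_\infty$-morphism; as a consistency check, its linear term is a chain map into $(C_{\chi^p},\delta)$ of \eqref{cone formula} precisely because $l=di$ and $\chi^p\bar l=l$. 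Since an $L_\infty$-morphism induces a natural transformation of the associated deformation functors (as already recorded for $\check i$ in the excerpt), this gives \eqref{fm}. The hard part is the middle paragraph: everything else follows formally from results already proved, but extracting the dgla-level Cartan homotopy from the sheaf-level one, and in particular verifying that its differential is genuinely filtration-preserving rather than merely filtration-lowering, requires honest bookkeeping in the Dolbeault model.
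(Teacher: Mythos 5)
Your argument is correct, but note that the paper itself offers no proof of this statement: it simply cites \cite{FMan1} Theorem 12.1, so the only meaningful comparison is with Fiorenza and Manetti's original argument, which your proposal essentially reconstructs. The three ingredients you use are exactly theirs: the globalised contraction on the fixed Dolbeault models of Warnings \ref{KS resol} and \ref{RGamma resol} is a genuine Cartan homotopy (as asserted in the Example following Remark \ref{rem Cart htpy}); the Cartan-formula computation $l_{\xi}=[D,i_{\xi}]+i_{-\bar{\partial}\xi}=\pm\left[\partial,i_{\xi}\right]$, using $[\bar{\partial},i_{\xi}]=i_{\bar{\partial}\xi}$, is the key point showing that $l$ corestricts to $\mathrm{End}^{F^p}\left(\mathbb R\Gamma\left(X,\Omega^*_{X/k}\right)\right)$ rather than merely shifting the filtration; and the assembly is \cite{FMan1} Proposition 7.4 together with the map of cones $C_l\rightarrow C_{\chi^p}$, $(a,x)\mapsto\left(\bar l(a),x\right)$. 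The one step you invoke that is not literally stated in this paper is the strict functoriality of the Fiorenza--Manetti $L_{\infty}$-structure on mapping cones with respect to a commuting square of dgla morphisms; this is a routine verification from the explicit higher brackets of \cite{FMan2} (they are universal expressions in the two brackets and in the connecting morphism, hence are intertwined by any pair of dgla maps commuting with it), so the omission is harmless, but it deserves the one-line justification if you want the proof to be self-contained.
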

\begin{proof}
See \cite{FMan1} Theorem 12.1.
\end{proof}
\begin{rem} \label{formal}
Recall that, as a consequence of the $E_1$-degeneration of the Hodge-to De Rham spectral sequence of $X$, the canonical inclusion of complexes $F^p\mathbb R\Gamma\left(X,\Omega^*_{X/k}\right)\hookrightarrow\mathbb R\Gamma\left(X,\Omega^*_{X/k}\right)$ descends to cohomology, i.e. the induced linear map $H^*\left(F^p\mathbb R\Gamma\left(X,\Omega^*_{X/k}\right)\right)\rightarrow H^*\left(X,k\right)$ is injective. This is equivalent to say that for all $p$ there is a quasi-isomorphism of complexes between $F^p\mathbb R\Gamma\left(X,\Omega^*_{X/k}\right)$ and $F^pH^*\left(X,k\right)$\footnote{Notice that the case $p=0$ is trivial.}, which in turn induces a quasi-isomorphism of dgla's between $\mathrm{End}^{F^p\mathbb R\Gamma\left(X,\Omega^*_{X/k}\right)}\left(\mathbb R\Gamma\left(X,\Omega^*_{X/k}\right)\right)$ and $\mathrm{End}^{F^pH^*\left(X,k\right)}\left(H^*\left(X,k\right)\right)$.
\end{rem}
Now denote $\hat{\chi}^p:=\chi_{H^*\left(X,k\right),F^pH^*\left(X,k\right)}$: Remark \ref{formal} entails in particular the existence of a homotopy equivalence of $L_{\infty}$-algebras
\begin{equation*}
h: C_{\chi^p}\longrightarrow C_{\hat{\chi}^p}
\end{equation*}
which induces, by the \emph{Basic Theorem of Deformation Theory} (see \cite{Man2}), an isomorphism 
\begin{equation} \label{h}
h: \mathrm{Def}_{C_{\chi^p}}\longrightarrow \mathrm{Def}_{C_{\hat{\chi}^p}}\footnote{Again, the symbol $h$ is denoting both the $L_{\infty}$-map and the induced morphism of deformation functors.}
\end{equation}
between the corresponding deformation functors. In the same fashion, the natural transformation
\begin{equation} \label{H Grass}
H^*:\mathrm{Grass}_{F^p\mathbb R\Gamma\left(X,\Omega^*_{X/k}\right),\mathbb R\Gamma\left(X,\Omega^*_{X/k}\right)}\xrightarrow{\hspace*{0.75cm}}\mathrm{Grass}_{F^pH^*\left(X,k\right),H^*\left(X,k\right)}.
\end{equation}
induced by the algebraic De Rham cohomology functor is an isomorphism: for a proof see \cite{FMan1} Theorem 10.6.
\begin{defn} \label{alg FM}
For all $p>0$ define the \emph{algebraic $p$\textsuperscript{th} Fiorenza-Manetti local period map} to be the morphism
\begin{equation*}
\mathrm{FM}^p:\mathrm{Def}_{KS_X}\xrightarrow{\hspace*{0.75cm}}\mathrm{Def}_{C_{\hat{\chi}^p}}
\end{equation*}
given by the composition of maps \eqref{h} and \eqref{fm}.
\end{defn}
\begin{defn} \label{geom FM}
For all $p>0$ define the \emph{geometric $p$\textsuperscript{th} Fiorenza-Manetti local period map} to be the morphism
\begin{eqnarray*} \label{FM lpm}
\mathcal P^p:\mathrm{Def}_X\qquad&\xrightarrow{\hspace*{0.5cm}}&\mathrm{Grass}_{F^pH^*\left(X\right),H^*\left(X\right)} \\
\forall A\in\mathfrak{Art}_k\qquad\mathrm{Def}_X\left(A\right)\ni\left(\mathscr O_A\overset{\xi}{\rightarrow}\mathscr O_X\right)&\longmapsto& \quad F^pH^*\left(X,\mathscr  O_A\right)\in\mathrm{Grass}_{F^pH^*\left(X\right),H^*\left(X\right)}\left(A\right).
\end{eqnarray*}
\end{defn}
Now we are finally ready to lift Griffiths period map to a morphism of deformation functors.
\begin{thm} \emph{(Fiorenza-Manetti)} \label{Fiorenza-Manetti}
There is a natural isomorphism between maps $\mathrm{FM^p}$ and $\mathcal P^p$, meaning that the diagram
\begin{equation*}
\xymatrix{\mathrm{Def}_{KS_X}\ar[rr]^{\mathrm{FM}^p}\ar[d]_{\mathscr O}^{\wr} & & \mathrm{Def}_{C_{\hat{\chi}^p}}\ar[d]^{\Psi_{\hat{\chi}^p}}_{\wr} \\
\mathrm{Def}_X\ar[rr]^{\mathcal P^p} & & \mathrm{Grass}_{F^pH^*\left(X\right),H^*\left(X\right)}}
\end{equation*}
commutes. Moreover the tangent morphism to the functor $\mathcal P^p$ is the same as map \eqref{per diff}.
\end{thm}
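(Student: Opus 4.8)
The plan is to construct an explicit natural transformation between the two functors built out of $\mathrm{Def}_{KS_X}$ and then check commutativity at the level of Maurer--Cartan elements, finally identifying the tangent map. First I would recall that, by Theorem \ref{Donatella} and Theorem \ref{Grass=Def_C}, both vertical arrows $\mathscr O$ and $\Psi_{\hat\chi^p}$ are isomorphisms, so the content of the statement is really that the top composite $\mathrm{FM}^p=h\circ\mathrm{fm}^p$ transports, under these identifications, to the geometric period map $\mathcal P^p$. Fix $A\in\mathfrak{Art}_k$ and a Maurer--Cartan element $\xi\in\mathrm{MC}_{KS_X}(A)$, i.e. $\xi\in\Gamma(X,\mathscr A^{0,1}_X(\mathscr T_X))\otimes\mathfrak m_A$ with $\bar\partial\xi+\tfrac12[\xi,\xi]=0$. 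Under $\mathscr O$ this corresponds to the deformation $\mathscr O_\xi\to\mathscr O_X$, whose relative de Rham complex I would compute explicitly in the Dolbeault model \eqref{RGamma}: the point is that the twisted differential on $\Gamma(X,\mathscr A^{*,*}_X)\otimes A$ is precisely $(\partial+\bar\partial)+l_\xi=(\partial+\bar\partial)+[\,\cdot\,,\xi]$ acting through the Lie derivative, because $\mathscr O_\xi$ is cut out by $\bar\partial f=\xi\lrcorner\,\partial f$. This is where Warnings \ref{KS resol} and \ref{RGamma resol} do their job: the specific resolutions are chosen exactly so that the Lie derivative $l$ and contraction $i$ of the scheme $X$ act on the same cochain model that computes $\mathbb R\Gamma(X,\mathscr T_X)$ and $\mathbb R\Gamma(X,\Omega^*_{X/k})$.

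Next I would show that the Hodge filtration of the deformed relative de Rham complex $F^p\mathbb R\Gamma(X,\mathscr O_\xi)$, as a subcomplex of $\mathbb R\Gamma(X,\Omega^*_{X/k})\otimes A$, coincides with $e^{i_\xi}$ applied to $F^p\mathbb R\Gamma(X,\Omega^*_{X/k})\otimes A$. Concretely one checks that conjugation by $e^{i_\xi}$ intertwines the untwisted differential $d=\partial+\bar\partial$ with the twisted one $d+l_\xi$ — this is exactly Remark \ref{rem Cart htpy}(4), that $e^i$ is a homotopy between $l$ and $0$, together with the Cartan homotopy identities; applying the gauge transformation $e^{i_\xi}$ therefore identifies the deformed complex $(V\otimes A, d+l_\xi)$ with $(V\otimes A,d)$ as complexes, carrying the naive constant filtration to the subspace $e^{i_\xi}(F^p V\otimes A)$. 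Under the dictionary of Theorem \ref{Grass=Def_C} the Maurer--Cartan element $\mathrm{fm}^p(\xi)=(l_\xi,i_\xi)\in\mathrm{MC}_{C_{\chi^p}}(A)$ corresponds via $\Psi_{\chi^p}$ to exactly this deformation $e^{i_\xi}(F^p V\otimes A)\subseteq V\otimes A$ — this is the Fiorenza--Manetti recipe, in which the second component $i_\xi$ of the cone element produces the automorphism $e^{i_\xi}$ realizing the deformed subcomplex. Composing with $h$ and with the cohomology isomorphism \eqref{H Grass} (Remark \ref{formal}) converts this into $F^pH^*(X,\mathscr O_\xi)\subseteq H^*(X,k)\otimes A$, which is precisely $\mathcal P^p(\mathscr O_\xi\to\mathscr O_X)$. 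Thus the square commutes on objects; naturality in $A$ is automatic since every map involved is a natural transformation, and the gauge/isomorphism ambiguities are absorbed because $\Psi_{\hat\chi^p}$ and $\mathscr O$ pass to homotopy/isomorphism classes.

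For the tangent statement I would specialize to $A=k[\varepsilon]/(\varepsilon^2)$, where the Maurer--Cartan equation linearizes and $\mathrm{Def}_{KS_X}(k[\varepsilon])\cong H^1(X,\mathscr T_X)$. For $\xi=v\varepsilon$ with $\bar\partial v=0$ one has $e^{i_{v\varepsilon}}=1+i_v\varepsilon$, so the deformed subspace is $F^pV\otimes k[\varepsilon]$ perturbed by $i_v$, and passing to cohomology the first-order term is the class of the contraction $[\omega]\mapsto[i_v\omega]=[v\lrcorner\,\omega]$ in $H^*(X,k)/F^pH^*(X,k)$. This is by definition the map \eqref{dG plpm}, summed over $k$, i.e. map \eqref{per diff}; combined with the fact (Theorem \ref{Fiorenza-Manetti}, already cited from \cite{FMan1}) that this agrees with Griffiths' computed differential, the identification is complete. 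The main obstacle I anticipate is the careful bookkeeping in the second step: verifying that the concrete Dolbeault-model description of the twisted relative de Rham complex of $\mathscr O_\xi$ matches, on the nose and not merely up to quasi-isomorphism, the subcomplex that $\Psi_{\chi^p}$ assigns to $(l_\xi,i_\xi)$, since this requires tracking the sign conventions in the cone formula \eqref{cone formula}, the Lie-derivative action, and the gauge action of $e^{i_\xi}$ simultaneously; everything else is formal once this compatibility of models is pinned down, which is exactly why the Warnings fixing the resolutions were inserted.
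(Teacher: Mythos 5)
Your outline is mathematically sound, but be aware that the paper itself does not prove Theorem \ref{Fiorenza-Manetti}: its ``proof'' is just a citation of \cite{FMan1} (Theorem 12.3 and Corollary 12.5). What you have written is essentially a reconstruction of that cited argument, and its key ingredients are the right ones: the deformation $\mathscr O_\xi$ attached to a Maurer--Cartan element $\xi$ has relative de Rham complex modelled by the twisted Dolbeault complex $\left(\Gamma\left(X,\mathscr A^{*,*}_X\right)\otimes A,\partial+\bar\partial+l_\xi\right)$; the Cartan-homotopy identities plus the Maurer--Cartan equation give $e^{-i_\xi}\, d\, e^{i_\xi}=d+l_\xi$ (the term $i_{\bar\partial\xi+\frac12\left[\xi,\xi\right]}$ vanishes), so $e^{i_\xi}$ identifies the deformed filtered complex with the undeformed one carrying $F^p\otimes A$ to $e^{i_\xi}\left(F^p\otimes A\right)$, which is exactly $\Psi_{\chi^p}\left(l_\xi,i_\xi\right)$; $E_1$-degeneration and \eqref{H Grass} pass this to cohomology; and linearization at $k\left[\varepsilon\right]/\left(\varepsilon^2\right)$ produces the contraction \eqref{per diff}. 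Two remarks on the comparison. First, the paper's citation buys brevity, while your route buys self-containedness; notably, the bookkeeping you flag as the main obstacle (matching the twisted Dolbeault model with the relative de Rham complex of the deformation, compatibly with the Hodge filtration) is precisely the kind of computation the paper does carry out later in the derived setting, via the zig-zag \eqref{resol} and the $\mathfrak m_A$-adic filtration spectral sequences in the proofs of Theorem \ref{main} and Theorem \ref{univ lpm ddf}, so your sketch fits the paper's methods well. Second, the thinnest point of your sketch is the final identification of $F^pH^*\left(X,\mathscr O_\xi\right)$ inside $H^*\left(X,k\right)\otimes A$ with the class of $e^{i_\xi}\left(F^p\mathbb R\Gamma\left(X,\Omega^*_{X/k}\right)\otimes A\right)$: this needs the $A$-flatness/freeness of the deformed Hodge pieces and base change (i.e.\ the relative degeneration argument behind Remark \ref{formal}), which you only gesture at; that is a gap of detail rather than of idea, and it is exactly what \cite{FMan1} supplies.
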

\begin{proof}
See \cite{FMan1} Theorem 12.3 and Corollary 12.5.
\end{proof}
\subsection{Flag Functors and the Fiorenza-Manetti Period Map}
Both $\mathrm{FM^p}$ and $\mathcal P^p$ depend on a filtration parameter: we would like to get rid of it, in order to define universal versions of the algebraic and geometric Fiorenza-Manetti period map. \\
Observe that the target functor of any universal version of the geometric Fiorenza-Manetti period map should not be simply the product of the deformation functors $\mathrm{Grass}_{F^pH^*\left(X,k\right),H^*\left(X,k\right)}$, because the only deformations of the sequence $\left(F^pH^*\left(X,k\right)\right)_p$ of subcomplexes of $H^*\left(X,k\right)$ which may belong to its image are those preserving the property that $F^{\bullet}$ is a filtration. \\
For this reason, let $\left(V,\mathcal F^{\bullet}\right)$ be a filtered complex and define the \emph{flag functor associated to $\left(V,\mathcal F^{\bullet}\right)$} to be
\begin{eqnarray*}
\mathrm{Flag}_V^{\mathcal F^{\bullet}}:&\mathfrak{Art}_k&\xrightarrow{\hspace*{4.5cm}}\mathfrak{Set} \\
&A&\mapsto\left\{\left(\left(U,\mathcal G^p\right)\right)_p\text{ s.t. }\left( U,\mathcal G^p\right)\in\mathrm{Grass}_{\mathcal F^pV,V}\left(A\right),\mathcal G^p U\hookrightarrow\mathcal G^{p-1} U\right\}.
\end{eqnarray*}
Consider the complex
\begin{equation} \label{filt-pres dgla definition}
\mathrm{End}^{\mathcal F^{\bullet}}\left(V\right):=\underset{p}{\bigcap}\mathrm{End}^{\mathcal F^pV}\left(V\right)
\end{equation}
which may be seen as the subcomplex of $\mathrm{End}\left(V\right)$ made of filtration-preserving endomorphisms. 
\begin{example}
Notice that if our base filtered complex $\left(V,\mathcal F^{\bullet}\right)$ is the algebraic De Rham complex (or cohomology) of a scheme $X$ equipped with the Hodge filtration, than the complex of filtration-preserving endomorphisms is nothing but the complex of non-negatively graded endomorphisms. In other words, in the notations of formula \eqref{filt-pres dgla definition} we have that 
\begin{equation*}
\mathrm{End}^{F^{\bullet}}\left(\mathbb R\Gamma\left(X,\Omega^*_{X/k}\right)\right)=\mathrm{End}^{\geq0}\left(\mathbb R\Gamma\left(X,\Omega^*_{X/k}\right)\right).
\end{equation*}
\end{example}
Again $\mathrm{End}^{\mathcal F^{\bullet}}\left(V\right)$ is endowed with a natural structure of differential graded Lie algebra: this comes with a natural inclusion of dgla's
\begin{equation*}
\chi:\mathrm{End}^{\mathcal F^{\bullet}}\left(V\right)\hookrightarrow\mathrm{End}\left(V\right).
\end{equation*}
and let 
\begin{equation*}
C^{\mathcal F^{\bullet}}_V:=\underset{\longleftarrow}{\mathrm{holim}}\left(\mathrm{End}^{\mathcal F^{\bullet}}\left(V\right)\doublerightarrow{\chi}{0}\mathrm{End}^*\left(V\right)\right).
\end{equation*}
be its homotopy cokernel.
\begin{prop} \label{gen und Flag dgla} \emph{(Fiorenza-Martinengo)}
In the above notations there is an isomorphism of functors
\begin{equation*}
\mathrm{Flag}_V^{\mathcal F^{\bullet}}\simeq\mathrm{Def}_{C^{\mathcal F^{\bullet}}_V}
\end{equation*}
In particular $\mathrm{Flag}_V^{\mathcal F^{\bullet}}$ is a deformation functor.
\end{prop}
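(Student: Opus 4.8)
The plan is to adapt Fiorenza and Manetti's proof of Theorem~\ref{Grass=Def_C} (see \cite{FMan1}, Proposition~9.2) from a single subcomplex to an entire flag, the guiding principle being that a flag deformation of $\left(V,\mathcal F^{\bullet}\right)$ is, filtration step by filtration step, a subcomplex deformation, subject to one nesting compatibility which turns out to be automatic on the Maurer--Cartan side. First I would record that $\mathrm{End}^{\mathcal F^{\bullet}}\left(V\right)=\bigcap_p\mathrm{End}^{\mathcal F^pV}\left(V\right)$ is genuinely a sub-dgla of $\mathrm{End}\left(V\right)$: it is obviously closed under the bracket, and it is closed under the differential of the endomorphism complex precisely because each $\mathcal F^pV$ is a subcomplex of $\left(V,d\right)$. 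Hence $\chi$ is an inclusion of dgla's, Proposition~\ref{mapping cone} applies and endows $C^{\mathcal F^{\bullet}}_V$ with a canonical $L_{\infty}$-structure; in particular $\mathrm{Def}_{C^{\mathcal F^{\bullet}}_V}$ is a deformation functor in the sense of Schlessinger, so the ``in particular'' clause of the statement will follow for free once the functor isomorphism is established.

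Next I would build the comparison morphism. For each $p$ the inclusion of dgla's $\mathrm{End}^{\mathcal F^{\bullet}}\left(V\right)\hookrightarrow\mathrm{End}^{\mathcal F^pV}\left(V\right)$ induces an $L_{\infty}$-morphism of mapping cones $C^{\mathcal F^{\bullet}}_V\to C_{\chi_{\mathcal F^pV,V}}$, hence a morphism $\mathrm{Def}_{C^{\mathcal F^{\bullet}}_V}\to\mathrm{Def}_{C_{\chi_{\mathcal F^pV,V}}}$, which I compose with the isomorphism of Theorem~\ref{Grass=Def_C}; taking the product over $p$ gives a natural transformation $\Psi\colon\mathrm{Def}_{C^{\mathcal F^{\bullet}}_V}\to\prod_p\mathrm{Grass}_{\mathcal F^pV,V}$. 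Explicitly a Maurer--Cartan class in $C^{\mathcal F^{\bullet}}_V\otimes\mathfrak m_A$ is represented, via \eqref{cone formula}, by a pair $\left(f,g\right)$ with $f\in\mathrm{End}^{\mathcal F^{\bullet}}\left(V\right)^1\otimes\mathfrak m_A$ and $g\in\mathrm{End}^0\left(V\right)\otimes\mathfrak m_A$, and $\Psi$ sends it to the family $\left(\left(e^{g}\left(\mathcal F^pV\otimes A\right),d\right)\right)_p$, where $e^g\in\mathrm{Aut}^V\left(A\right)$ is the automorphism of $V\otimes A$ attached to $g$. The essential observation is that $e^g$ is one and the same automorphism of $V\otimes A$ for every $p$, so since $\mathcal F^pV\subseteq\mathcal F^{p-1}V$ the images $e^g\left(\mathcal F^pV\otimes A\right)$ are automatically nested; thus $\Psi$ in fact lands in the subfunctor $\mathrm{Flag}_V^{\mathcal F^{\bullet}}$, and one is reduced to proving that $\Psi\colon\mathrm{Def}_{C^{\mathcal F^{\bullet}}_V}\to\mathrm{Flag}_V^{\mathcal F^{\bullet}}$ is a natural isomorphism.

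To construct the inverse I would start from a flag deformation $\left(\left(U,\mathcal G^p\right)\right)_p\in\mathrm{Flag}_V^{\mathcal F^{\bullet}}\left(A\right)$, choose a representative with $U=\left(V\otimes A,d_A\right)$, $d_A\equiv d\pmod{\mathfrak m_A}$, and $\mathcal G^p$ a $d_A$-stable free $A$-submodule reducing to $\mathcal F^pV$, the $\mathcal G^p$ being nested. The technical heart is then the following elementary fact over the Artinian ring $A$: there exists a \emph{single} automorphism $e^g\in\mathrm{Aut}^V\left(A\right)$ with $e^g\left(\mathcal F^pV\otimes A\right)=\mathcal G^p$ simultaneously for all $p$. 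For one subcomplex this is immediate; for a flag it is proved by descending induction on $p$, extending a compatible automorphism one filtration step at a time and using freeness of the modules involved together with the fact that the flag reduces to $\mathcal F^{\bullet}$. Transporting back, $f:=e^{-g}d_Ae^g-d$ lies in $\mathrm{End}^{\mathcal F^{\bullet}}\left(V\right)^1\otimes\mathfrak m_A$ and, together with $g$, defines a Maurer--Cartan element of $C^{\mathcal F^{\bullet}}_V$ whose image under $\Psi$ is the given flag; independence of all the choices, up to the gauge action of $\left(C^{\mathcal F^{\bullet}}_V\right)^0\otimes\mathfrak m_A=\left(\mathrm{End}^{\mathcal F^{\bullet}}\left(V\right)^0\oplus\mathrm{End}^{-1}\left(V\right)\right)\otimes\mathfrak m_A$, is then checked exactly as in \cite{FMan1}, the two summands matching respectively the filtration-preserving automorphisms and the cohomologically trivial chain automorphisms that appear in the denominator defining $\mathrm{Flag}_V^{\mathcal F^{\bullet}}$.

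I expect the main obstacle to be precisely that ``elementary fact'': realising the whole flag of deformed subcomplexes as the image of the fixed flag $\mathcal F^{\bullet}$ under a \emph{single} automorphism of $V\otimes A$, and --- more delicately --- verifying that the ambiguity in this choice is absorbed exactly by the gauge group $\left(C^{\mathcal F^{\bullet}}_V\right)^0\otimes\mathfrak m_A$, so that $\Psi$ descends to a bijection on gauge/homotopy classes. Once this is settled the remainder is a filtered refinement of Fiorenza and Manetti's bookkeeping carried out uniformly in $p$, and naturality in $A$ is clear from the construction.
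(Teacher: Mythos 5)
The paper itself gives no argument for this proposition---the proof is a pointer to \cite{FMar}, Sections 5 and 6---so there is no internal text to match yours against word for word. That said, your reconstruction is the natural filtered refinement of Fiorenza and Manetti's proof of Theorem~\ref{Grass=Def_C}, and it is certainly in the spirit of what \cite{FMar} does. The forward map is right, and the key observation that the nesting condition comes for free because the automorphism $e^g$ attached to a Maurer--Cartan pair $\left(f,g\right)$ is uniform in $p$ is exactly the point. You are also right that the strict inclusions $C^{\mathcal F^{\bullet}}_V\hookrightarrow C_{\chi_{\mathcal F^pV,V}}$ are honest $L_{\infty}$-morphisms: the Fiorenza--Manetti higher brackets on a cone depend only on the ambient dgla, here $\mathrm{End}^*\left(V\right)$, which is the same for both. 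The lemma you isolate---that a flag of nested $d_A$-stable free $A$-submodules of $V\otimes A$ reducing to $\mathcal F^{\bullet}V$ is the image of $\mathcal F^{\bullet}V\otimes A$ under a \emph{single} $e^g$ with $g\in\mathrm{End}^0\left(V\right)\otimes\mathfrak m_A$---is correct and is the genuine new input beyond \cite{FMan1}: each successive quotient $\mathcal G^{p-1}/\mathcal G^p$ has vanishing $\mathrm{Tor}_1^A\left(-,k\right)$ and is therefore free over the local Artinian ring $A$, so one lifts an adapted basis of $\left(V,\mathcal F^{\bullet}\right)$ step by step, and nilpotency of $\mathfrak m_A$ supplies the logarithm $g$. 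The one step you leave as a gesture, and which I would want written out before calling the proof complete, is the bijectivity on gauge classes: the definition of $\mathrm{Flag}_V^{\mathcal F^{\bullet}}$ records each $\left(U,\mathcal G^p\right)$ already as a class in $\mathrm{Grass}_{\mathcal F^pV,V}\left(A\right)$, so one needs to argue that representatives can be chosen with a common ambient $U$ and that the residual ambiguity is precisely $\tilde{\mathrm{Aut}}^{\left(V,d\right)}\left(A\right)\times\bigcap_p\mathrm{Aut}^{\mathcal F^pV,V}\left(A\right)$, matching the gauge action of $\left(C^{\mathcal F^{\bullet}}_V\right)^0\otimes\mathfrak m_A$. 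That is still the filtered Fiorenza--Manetti bookkeeping you invoke, but it is where the argument really lives.
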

\begin{proof}
See \cite{FMar} Section 5 and Section 6.
\end{proof}
Now consider the functors $\mathrm{Flag}_{\mathbb R\Gamma\left(X,\Omega^*_{X/k}\right)}^{F^{\bullet}}$ and $\mathrm{Flag}_{H^*\left(X,k\right)}^{F^{\bullet}}$: the same arguments used to deal with map \eqref{H Grass} imply that the morphism
\begin{equation} \label{flag htpy stability}
H^*:\mathrm{Flag}_{\mathbb R\Gamma\left(X,\Omega^*_{X/k}\right)}^{F^{\bullet}}\xrightarrow{\hspace*{0.75cm}}\mathrm{Flag}_{H^*\left(X,k\right)}^{F^{\bullet}}
\end{equation}
is well-defined and an isomorphism. \\
\begin{rem} \label{Martinengo}
In the language of \cite{FMar} a pair of differential graded Lie algebras $\left(\mathfrak g,\mathfrak l\right)$ is said to be a \emph{formal pair} if there is an inclusion of dgla's $\mathfrak g\hookrightarrow\mathfrak l$ inducing an injective morphism $H^*\left(\mathfrak g\right)\hookrightarrow H^*\left(\mathfrak l\right)$ on cohomology: in particular by Remark \ref{formal} we have that for any smooth proper $k$-scheme $X$ and for all $p\geq 0$ the pair $\left(\mathrm{End}^{F^p}\left(\mathbb R\Gamma\left(X,\Omega^*_{X/k}\right)\right),\mathrm{End}\left(\mathbb R\Gamma\left(X,\Omega^*_{X/k}\right)\right)\right)$ is formal. Moreover The formality argument of Remark \eqref{formal} is uniform in $p$, therefore there is a filtered quasi-isomorphism between $\left(\mathbb R\Gamma\left(X,\Omega^*_{X/k}\right),F^{\bullet}\right)$ and $\left(H^*\left(X,k\right),F^{\bullet}\right)$, providing in turn a week equivalence between the dgla's $\mathrm{End}^{\geq 0}\left(\mathbb R\Gamma\left(X,\Omega^*_{X/k}\right)\right)$ and $H^*\left(\mathcal End^{\geq 0}\left(\Omega^*_{X/k}\right)\right)$. It follows that the pair $\left(\mathrm{End}^{\geq0}\left(\mathbb R\Gamma\left(X,\Omega^*_{X/k}\right)\right),\mathrm{End}\left(\mathbb R\Gamma\left(X,\Omega^*_{X/k}\right)\right)\right)$ is formal as well.
\end{rem}
It is easy to see that if $\left(\mathfrak g,\mathfrak l\right)$ is a formal pair of dgla's then the homotopy fibre $\underset{\longleftarrow}{\mathrm{holim}}\left(\mathfrak g\doublerightarrow{\mathrm{incl.}}{0}\mathfrak l\right)$ is quasi-abelian and in fact a model for it is given by the complex $\nicefrac{\mathfrak l}{\mathfrak g}\left[-1\right]$ endowed with the trivial bracket: see \cite{FMar} Section 5 for a more detailed explanation. In particular, if we apply this to the De Rham-theoretic case we have that
\begin{eqnarray} \label{tgt dgrass} 
C_{\mathbb R\Gamma\left(X,F^p\Omega^*_{X/k}\right),\mathbb R\Gamma\left(X,\Omega^*_{X/k}\right)}&\simeq&\underset{\longleftarrow}{\mathrm{holim}}\left(\mathrm{End}^{F^p}\left(\mathbb R\Gamma\left(X,\Omega^*_{X/k}\right)\right)\doublerightarrow{\quad}{\quad}\mathrm{End}\left(\mathbb R\Gamma\left(X,\Omega^*_{X/k}\right)\right)\right) \nonumber \\
&\simeq&\frac{\mathrm{End}^*\left(\mathbb R\Gamma\left(X,\Omega^*_{X/k}\right)\right)}{\mathrm{End}^{F^p}\left(\mathbb R\Gamma\left(X,\Omega^*_{X/k}\right)\right)}\left[-1\right] \nonumber \\
&\simeq&\frac{H^*\left(\mathcal End^*\left(\Omega^*_{X/k}\right)\right)}{H^*\left(\mathcal End^{F^p}\left(\Omega^*_{X/k}\right)\right)}\left[-1\right]
\end{eqnarray}
and
\begin{eqnarray} \label{tgt hoflag}
C^{F^{\bullet}}_{\mathbb R\Gamma\left(X,\Omega^*_{X/k}\right)}&\simeq&\underset{\longleftarrow}{\mathrm{holim}}\left(\mathrm{End}^{\geq0}\left(\mathbb R\Gamma\left(X,\Omega^*_{X/k}\right)\right)\doublerightarrow{\quad}{\quad}\mathrm{End}\left(\mathbb R\Gamma\left(X,\Omega^*_{X/k}\right)\right)\right) \nonumber \\
&\simeq&\frac{\mathrm{End}^*\left(\mathbb R\Gamma\left(X,\Omega^*_{X/k}\right)\right)}{\mathrm{End}^{\geq0}\left(\mathbb R\Gamma\left(X,\Omega^*_{X/k}\right)\right)}\left[-1\right] \nonumber \\
&\simeq&\frac{H^*\left(\mathcal End^*\left(\Omega^*_{X/k}\right)\right)}{H^*\left(\mathcal End^{\geq0}\left(\Omega^*_{X/k}\right)\right)}\left[-1\right]
\end{eqnarray}
where $\mathcal End^*\left(\Omega^*_{X/k}\right)$, $\mathcal End^{\geq0}\left(\Omega^*_{X/k}\right)$ and $\mathcal End^{F^p}\left(\Omega^*_{X/k}\right)$ denote respectively the endomorphism sheaf of $\Omega^*_{X/k}$, the sheaf of its non-negatively graded endomorphisms and the sheaf of those endomorphisms preserving the $p^{\mathrm{th}}$ piece of the Hodge filtration. \\
Summing up the preceding considerations, we obtain a very explicit description of the flag functor associated to the algebraic De Rham complex of $X$.
\begin{cor} \label{und Flag dgla}
There is a chain of isomorphism of deformation functors
\begin{equation} \label{chain}
\mathrm{Flag}_{H^*\left(X,k\right)}^{F^{\bullet}}\simeq\mathrm{Flag}_{\mathrm{End}^*\left(\mathbb R\Gamma\left(X,\Omega^*_{X/k}\right)\right)}^{F^{\bullet}}\simeq\mathrm{Def}_{\frac{\mathrm{End}^*\left(\mathbb R\Gamma\left(X,\Omega^*_{X/k}\right)\right)}{\mathrm{End}^{\geq 0}\left(\mathbb R\Gamma\left(X,\Omega^*_{X/k}\right)\right)}\left[-1\right]}.
\end{equation}
\end{cor}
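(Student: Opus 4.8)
The plan is to deduce the chain \eqref{chain} by composing two isomorphisms, each of which is either proved above or is a formal consequence of the quoted results; one reads the chain from the two ends and meets in the middle. First I would invoke \eqref{flag htpy stability}: the natural transformation $H^*\colon\mathrm{Flag}_{\mathbb R\Gamma\left(X,\Omega^*_{X/k}\right)}^{F^{\bullet}}\to\mathrm{Flag}_{H^*\left(X,k\right)}^{F^{\bullet}}$ sends a filtered $A$-deformation of $\left(\mathbb R\Gamma\left(X,\Omega^*_{X/k}\right),F^{\bullet}\right)$ to the filtered complex of its $A$-linear hypercohomology, and — exactly as for $\mathrm{Grass}$ in \eqref{H Grass} — the $E_1$-degeneration of the Hodge-to-De Rham spectral sequence of $X$, which holds uniformly in the filtration degree by Remark \ref{Martinengo}, makes this assignment well defined and bijective on every Artinian ring, hence an isomorphism of functors. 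By Proposition \ref{gen und Flag dgla} the flag functor $\mathrm{Flag}_{\mathbb R\Gamma\left(X,\Omega^*_{X/k}\right)}^{F^{\bullet}}$ is controlled entirely by the inclusion of filtration-preserving endomorphisms $\mathrm{End}^{\geq0}\left(\mathbb R\Gamma\left(X,\Omega^*_{X/k}\right)\right)\hookrightarrow\mathrm{End}^*\left(\mathbb R\Gamma\left(X,\Omega^*_{X/k}\right)\right)$, which is exactly the endomorphism-level reformulation recorded as the middle term of \eqref{chain}; this gives the left-hand isomorphism.

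For the right-hand isomorphism I would again use Proposition \ref{gen und Flag dgla} to identify this middle functor with $\mathrm{Def}_{C^{F^{\bullet}}_{\mathbb R\Gamma\left(X,\Omega^*_{X/k}\right)}}$, where $C^{F^{\bullet}}_{\mathbb R\Gamma\left(X,\Omega^*_{X/k}\right)}$ is the homotopy cokernel of that inclusion, and then feed in formality. By Remark \ref{Martinengo} the pair $\left(\mathrm{End}^{\geq0}\left(\mathbb R\Gamma\left(X,\Omega^*_{X/k}\right)\right),\mathrm{End}^*\left(\mathbb R\Gamma\left(X,\Omega^*_{X/k}\right)\right)\right)$ is a formal pair of dgla's, so by the general fact recalled just before \eqref{tgt dgrass} its homotopy fibre over $0$ is quasi-abelian, with explicit model the complex $\frac{\mathrm{End}^*\left(\mathbb R\Gamma\left(X,\Omega^*_{X/k}\right)\right)}{\mathrm{End}^{\geq0}\left(\mathbb R\Gamma\left(X,\Omega^*_{X/k}\right)\right)}\left[-1\right]$ with trivial bracket — this is precisely the computation \eqref{tgt hoflag}. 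Since that comparison is an $L_{\infty}$-homotopy equivalence of mapping cones, the Basic Theorem of Deformation Theory (\cite{Man2}) yields an isomorphism $\mathrm{Def}_{C^{F^{\bullet}}_{\mathbb R\Gamma\left(X,\Omega^*_{X/k}\right)}}\simeq\mathrm{Def}_{\frac{\mathrm{End}^*\left(\mathbb R\Gamma\left(X,\Omega^*_{X/k}\right)\right)}{\mathrm{End}^{\geq0}\left(\mathbb R\Gamma\left(X,\Omega^*_{X/k}\right)\right)}\left[-1\right]}$, which is the last isomorphism of \eqref{chain}. Composing the two finishes the proof.

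There is no genuinely hard step here: the statement is a bookkeeping consequence of Proposition \ref{gen und Flag dgla}, Remark \ref{Martinengo} and the computation \eqref{tgt hoflag}. The only points deserving care are (i) checking that the equivalence produced by formality is a bona fide $L_{\infty}$-quasi-isomorphism, not merely a quasi-isomorphism of underlying complexes, so that the Basic Theorem of Deformation Theory legitimately applies; and (ii) checking that the filtered quasi-isomorphism $\left(\mathbb R\Gamma\left(X,\Omega^*_{X/k}\right),F^{\bullet}\right)\overset{\sim}{\to}\left(H^*\left(X,k\right),F^{\bullet}\right)$ underlying the first isomorphism is compatible with \emph{all} steps of $F^{\bullet}$ simultaneously — i.e. that the $E_1$-degeneration argument of Remark \ref{formal} is uniform in $p$ — which is exactly the content of Remark \ref{Martinengo}. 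Everything else is a formal concatenation of the cited isomorphisms.
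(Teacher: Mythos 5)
Your proposal is correct and follows essentially the same route as the paper: the left-hand isomorphism is exactly the map \eqref{flag htpy stability}, and the right-hand one is obtained by combining Proposition \ref{gen und Flag dgla} with the formality computation \eqref{tgt hoflag} (justified via Remark \ref{Martinengo} and the Basic Theorem of Deformation Theory), which is precisely the paper's argument. Your reading of the middle term of \eqref{chain} as the flag functor of $\mathbb R\Gamma\left(X,\Omega^*_{X/k}\right)$ itself (the subscript $\mathrm{End}^*\left(\mathbb R\Gamma\left(X,\Omega^*_{X/k}\right)\right)$ in the displayed chain is a typo) is the intended one, so no gap remains.
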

\begin{proof}
The first isomorphism in chain \eqref{chain} is map \eqref{flag htpy stability}, the other one follows combining Proposition \ref{gen und Flag dgla} and formula \eqref{tgt hoflag}. 
\end{proof}
Now we are ready to define some universal version of the Fiorenza-Manetti morphism.
\begin{defn} \label{univ geom Fio-Man}
Define the \emph{universal geometric Fiorenza-Manetti period map} to be the natural transformation
\begin{eqnarray*} 
\mathcal P:&\mathrm{Def}_X&\xrightarrow{\hspace*{0.75cm}}\mathrm{Flag}_{H^*\left(X,k\right)}^{F^{\bullet}} \\
\forall A\in\mathfrak{Art}_k\quad\quad&\left(\mathscr O_A\overset{\xi}{\rightarrow}\mathscr O_X\right)&\longmapsto\left(\mathcal P^p\left(\left(\mathscr O_A\overset{\xi}{\rightarrow}\mathscr O_X\right)\right)\right)_p.
\end{eqnarray*}
\end{defn}
Notice that Definition \ref{geom FM} ensures that $\mathcal P$ is a well-defined morphism of functors. \\ 
Map $\mathcal P$ is a good universal version of the geometric Fiorenza-Manetti period map; we would like to complete the picture with a natural universal version of the algebraic Fiorenza-Manetti map, that is we would like to construct a morphism of differential graded Lie algebras 
\begin{equation*}
\mathrm{FM}:KS_X\longrightarrow\frac{\mathrm{End}^*\left(\mathbb R\Gamma\left(X,\Omega^*_{X/k}\right)\right)}{\mathrm{End}^{\geq 0}\left(\mathbb R\Gamma\left(X,\Omega^*_{X/k}\right)\right)}\left[-1\right]
\end{equation*}
such that the diagram
\begin{equation*}
\xymatrix{\mathrm{Def}_{KS_X}\ar[rrr]^{\mathrm{FM}}\ar[d]^{\wr} & & & \mathrm{Def}_{\frac{\mathrm{End}^*\left(\mathbb R\Gamma\left(X,\Omega^*_{X/k}\right)\right)}{\mathrm{End}^{\geq 0}\left(\mathbb R\Gamma\left(X,\Omega^*_{X/k}\right)\right)}\left[-1\right]}\ar[d]_{\wr} \\
\mathrm{Def}_X\ar[rrr]^{\mathcal P} & & & \mathrm{Flag}_{H^*\left(X,k\right)}^{F^{\bullet}}}
\end{equation*}
commutes: we will construct it in Section 3.2.

\section{The Period Map as a Morphism of $\infty$-Groupoids}

Theorem \ref{Fiorenza-Manetti} attests two very interesting facts: the first one is that Definition \ref{alg FM} and Definition \ref{geom FM} are naturally equivalent (and this enables us to simply talk about the \emph{Fiorenza-Manetti local period map}, dropping any further adjective) and the second one is that map \eqref{FM lpm} really extends the period mapping \eqref{G lpm} to a morphism of deformation theories, as the tangent maps are the same. In this perspective, the period map is seen to play a remarkable unifying role in Deformation Theory and Hodge Theory: as a matter of fact a number of highly non-trivial classical results such as \emph{Kodaira Principle} and \emph{Bogomolov-Tian-Todorov Theorem} are recovered as corollaries of Theorem \ref{Fiorenza-Manetti} (see \cite{FMan1}, \cite{FMan3}, \cite{FMar} and \cite{IacMan} for more details). \\
Anyway the contemporary viewpoint on Deformation Theory claims that Schlessinger's deformation functors are not the most suitable tools in order to study general local moduli problems, as they are often unable to capture most of the hidden geometry of such problems. As a matter of fact Schlessinger's functors do not generally take into account automorphisms and higher autoequivalences of the objects they classify and in most cases they do not give a proper description of obstructions, either. Moreover the correspondence between differential graded Lie algebras and deformation functors in the context of classical Deformation Theory is not fully satisfying\footnote{Notice that an instance of such a drawback has already appeared in Section 2.2, since the mapping cone \eqref{cone formula} is endowed with a non-trivial $L_{\infty}$-structure.}, but the most important drawback of Schlessinger's functors for the sake of this paper is that in general they are not formal neighbourhoods of any global moduli space; this is precisely the case of the functor $\mathrm{Def}_X$ defined in Section 2.1: there does not exist any (classical) moduli space of proper smooth schemes of dimension $d>1$, thus for a general choice of the scheme $X$ the functor $\mathrm{Def}_X$ cannot be describing infinitesimally any algebraic space.
\subsection{Quick Review of Derived Deformation Theory}
The critical aspects we have briefly listed above mark some of the reasons that have been leading to the development of Derived Deformation Theory: the rough idea behind this subject is that Deformation Theory is not really a {} ``categorical'' subject, but rather an {} ``$\left(\infty,1\right)$-categorical'' one, meaning that its constructions and invariants should be homotopical (or derived) in nature. In particular the basic objects of Derived Deformation Theory should be homotopy analogues of Schlessinger's functors -- i.e. functors defined over (some subcategory of) $\mathfrak{dgArt}_k$ rather than $\mathfrak{Art}_k$ -- satisfying homotopical versions of Schlessinger's axioms and preserving the homotopical structure of the category of Artinian dg-algebras. Foundational work on Derived Deformation Theory includes \cite{Getz}, \cite{Hin2}, \cite{Kon}, \cite{Lu1}, \cite{Lu3}, \cite{Man1}, \cite{Pr1} and \cite{TV}, while a gentle introduction to the subject can be found in \cite{dN1}: here we quickly review some of the main concepts just to fix notations. \\
There are several different ways to enhance a classical deformation functor to a derived one, giving rise to various consistent derived deformation theories; in \cite{Pr1} Pridham proved that all these variants are homotopy equivalent\footnote{All approaches to Derived Deformation Theory are described by a well-defined $\left(\infty,1\right)$-category: Pridham proved that all such $\left(\infty,1\right)$-categories are equivalent; for more details see \cite{Pr1}.}, thus in this paper by derived deformation functor we will always mean a \emph{Hinich derived deformation functor}\footnote{In the literature people also refer to such functors as \emph{formal moduli problems} or \emph{formal stacks}.}. The latter is a functor 
\begin{equation*}
\mathbf F:\mathfrak{dgArt}_k^{\leq 0}\longrightarrow\mathfrak{sSet}
\end{equation*}
satisfying weaker versions of Schlessiger's axioms for classical deformation problems: for a precise definition see \cite{dN1} or the original paper \cite{Hin2}, but essentially $\mathbf F$ is required to be \emph{homotopic} -- i.e. to map quasi-isomorphisms in $\mathfrak{dgArt}_k^{\leq 0}$ to weak equivalences in $\mathfrak{sSet}$ -- and \emph{homotopy-homogeneous} -- i.e. such that for all surjections $A\twoheadrightarrow B$ and all maps $C\rightarrow B$ in $\mathfrak{dgArt}_k^{\leq 0}$ the natural map
\begin{equation*}
\mathbf F\left(A\times_B C\right)\longrightarrow\mathbf F\left(A\right)\times^{h}_{\mathbf F\left(B\right)}\mathbf F\left(C\right)\footnote{The symbol $-\times_-^h-$ denotes the homotopy fibre product in $\mathfrak{sSet}$.}
\end{equation*}
is a weak equivalence. In case $\mathbf F$ is only \emph{homotopy-surjecting} -- i.e. for all tiny acyclic extension $A\rightarrow B$ in $\mathfrak{dgArt}_k^{\leq 0}$ the induced map $\pi_0\left(\mathbf F\left(A\right)\right)\rightarrow\pi_0\left(\mathbf F\left(B\right)\right)$ is surjective -- we will say that it is a \emph{derived pre-deformation functor}. \\
All the geometry of Hinich functors is captured by certain cohomological invariants which generalise tangent spaces and obstruction theories for classical deformation functors: let us briefly recall how to construct them. Given a derived deformation functor $\mathbf F:\mathfrak{dgArt}_k^{\leq 0}\longrightarrow\mathfrak{sSet}$, consider as in \cite{Pr1} Section 1.6 the functor
\begin{eqnarray*}
\mathrm{tan}\,\mathbf F:&\mathfrak{dgVect}_k^{\leq 0}&\xrightarrow{\hspace*{0.50cm}}\mathfrak{sVect}_k \\
&V&\longmapsto\mathbf F\left(k\oplus V\right)
\end{eqnarray*}
and recall that the \emph{$j$-th generalised tangent space} of $\mathbf F$ is said to be the group
\begin{equation*}
H^j\left(\mathbf F\right):=\pi_i\left(\mathrm{tan}\,\mathbf F\left(k\left[-n\right]\right)\right)\qquad\qquad\text{where }n-i=j
\end{equation*}
and the definition is well-given because of \cite{Pr1} Corollary 1.46. Generalised tangent spaces extend the underived notions of tangent and obstruction spaces in the sense that if $\mathbf F$ is a derived deformation functor, the group $H^{j}\left(\mathbf F\right)$ parametrises infinitesimal $j$-automorphisms associated to it; in particular $H^0\left(\mathbf F\right)$ encodes first-order derived deformations and $H^1\left(\mathbf F\right)$ encodes second-order derived deformations, i.e. all obstructions (see \cite{Pr1} Section 1.6).\\
One of the properties of derived deformation functors which are most interesting to us is that they provide the right notion of formal stack, i.e. that they describe derived geometric stacks infinitesimally. Foundational work on higher stacks and Derived Algebraic Geometry includes \cite{Lu1}, \cite{Lu2}, \cite{Pr2}, \cite{Toe} and \cite{TV}: here we only recall that given a (possibly non-geometric) derived stack over $k$
\begin{equation*}
\mathcal F:\mathfrak{dgAlg}_k^{\leq 0}\rightarrow\mathfrak{sSet}
\end{equation*}
a point $x$ over it, the \emph{formal neighbourhood} of $F$ at $x$ is defined as
\begin{eqnarray*}
\hat{\mathbf F}_x:&\mathfrak{dgArt}^{\leq 0}_k&\xrightarrow{\hspace*{1cm}}\mathfrak{sSet}\\
&A&\longmapsto \mathcal F\left(A\right)\times_{\mathcal F\left(k\right)}^h\left\{x\right\}.
\end{eqnarray*}
A well-known folklore result in Derived Algebraic Geometry is that $\hat{\mathbf F}_x$ is a derived deformation functor: a proof of it is hidden somewhere in \cite{Lu1} and \cite{TV}; see also \cite{Toe} and \cite{Pr1}.\\
Now denote by $\Omega^*_{\mathrm{DR}}\left(\Delta^*\right)$ the simplicial differential graded commutative algebra of \emph{polynomial differential forms}, given in simplicial level $n$ by
\begin{equation*}
\Omega^*_{\mathrm{DR}}\left(\Delta^n\right):=\frac{k\left[x_0,x_1,\ldots,x_n,dx_0,dx_1,\ldots,dx_n\right]}{\sum x_i=1,\sum dx_i=0}
\end{equation*}
where $x_0,x_1,\ldots,x_n$ live in cochain degree $0$ and $dx_0,dx_1,\ldots,dx_n$ in cochain degree 1; more generally, given a simplicial set $S$, the symbol $\Omega^*_{\mathrm{DR}}\left(S\right)$ will stand for the simplicial differential graded commutative algebra of \emph{polynomial differential forms on} $S$, which is defined in dg level $p$ by
\begin{equation*}
\Omega^p_{\mathrm{DR}}\left(S\right):=\mathrm{Hom}_{\mathfrak{sSet}}\left(S,\Omega^p_{\mathrm{DR}}\left(\Delta^*\right)\right).
\end{equation*} 
Also recall that the \emph{Hinich nerve} of a dgla $\mathfrak g$ is defined to be the derived deformation functor
\begin{align*}
\mathbb R\mathrm{Def}_{\mathfrak g}:\mathfrak{dg}&\mathfrak{Art}^{\leq 0}_k\xrightarrow{\hspace*{1.25cm}}\mathfrak{sSet} \\
&\phantomarrow{\mathfrak{dgArt}^{\leq 0}_k}{A} \mathbb R\mathrm{MC}_{\mathfrak g\otimes\Omega^*_{\mathrm{DR}}\left(\Delta^*\right)}\left(A\right)
\end{align*}
where $\mathbb R\mathrm{MC}_{\mathfrak g\otimes\Omega^*_{\mathrm{DR}}\left(\Delta^*\right)}\left(A\right)$ is the simplicial set determined in level $n$ by the set
\begin{equation*}
\mathbb R\mathrm{MC}_{\mathfrak g\otimes\Omega^*_{\mathrm{DR}}\left(\Delta^n\right)}\left(A\right):=\left\{x\in \left(\mathfrak g\otimes\Omega^*_{\mathrm{DR}}\left(\Delta^n\right)\otimes\mathfrak m_A\right)^1\text{ s.t. } d\left(x\right)+\frac{1}{2}\left[x,x\right]=0\right\}.
\end{equation*}
\begin{thm} \label{compare def theories} \emph{(Hinich, Lurie, Pridham)}
The functor
\begin{eqnarray*}
\mathbb R\mathrm{Def}:&\mathfrak{dgLie}_k&\xrightarrow{\hspace*{0.5cm}}\mathfrak{Def}^{\mathrm{Hin}}_k \\
&\mathfrak g&\longmapsto\mathbb R\mathrm{Def}_{\mathfrak g}
\end{eqnarray*}
is an equivalence of $\left(\infty,1\right)$-categories, thus it induces an equivalence on the homotopy categories
\begin{equation*}
\mathrm{Ho}\left(\mathfrak{dgLie}_k\right)\simeq\mathrm{Ho}\left(\mathfrak{Def}^{\mathrm{Hin}}_k\right).
\end{equation*}
\end{thm}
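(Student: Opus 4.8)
The plan is to follow the route of Hinich \cite{Hin2}, Lurie \cite{Lu1} and Pridham \cite{Pr1}, splitting the statement into three pieces: (a) that $\mathbb R\mathrm{Def}_{\mathfrak g}$ really lands in $\mathfrak{Def}^{\mathrm{Hin}}_k$ and that $\mathbb R\mathrm{Def}$ descends to an $(\infty,1)$-functor; (b) essential surjectivity; (c) full faithfulness on mapping spaces. The stated equivalence of homotopy categories will then be the $\pi_0$-shadow of the $(\infty,1)$-equivalence.

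For (a) I would first check that each $\mathbb R\mathrm{MC}_{\mathfrak g\otimes\Omega^*_{\mathrm{DR}}(\Delta^*)}(A)$ is a Kan complex and that a surjection $A\twoheadrightarrow B$ in $\mathfrak{dgArt}_k^{\leq 0}$ induces a Kan fibration on $\mathbb R\mathrm{MC}$-spaces (the standard lifting argument with $\Omega^*_{\mathrm{DR}}(\Delta^*)$ as path object). Homotopy invariance is then immediate: tensoring with the finite-dimensional complex $\mathfrak m_A$ is exact, so a quasi-isomorphism $\mathfrak g\to\mathfrak h$ gives a filtered quasi-isomorphism $\mathfrak g\otimes\mathfrak m_A\to\mathfrak h\otimes\mathfrak m_A$ of nilpotent dglas, and the Maurer--Cartan space sends such maps to weak equivalences — by descending induction along the powers of $\mathfrak m_A$, the square-zero steps being controlled by the fact that the $\mathrm{MC}$-space of an abelian dgla $V$ is the Dold--Kan space of $V[1]$. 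Homotopy-homogeneity I would prove the same way: $\mathfrak g\otimes-$ preserves the fibre product $A\times_B C$, while $\mathbb R\mathrm{MC}$ turns the surjection $A\to B$ into a fibration, so the comparison map is an isomorphism onto a strict pullback along a fibration, which computes the homotopy pullback. Homotopy invariance in $\mathfrak g$ is precisely what lets one descend $\mathbb R\mathrm{Def}$ to the localisations and, tracking the simplicial enrichments, promote it to an $(\infty,1)$-functor.

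Part (b) is where the real work — and the main obstacle — lies. Given an abstract $\mathbf F\in\mathfrak{Def}^{\mathrm{Hin}}_k$ one must manufacture a dgla $\mathfrak g$ with $\mathbb R\mathrm{Def}_{\mathfrak g}\simeq\mathbf F$, and the idea is to reconstruct it from the generalised tangent spaces $H^j(\mathbf F)$ together with the obstruction data of $\mathbf F$. Concretely, I would equip the tangent complex $T_{\mathbf F}$ (with $H^j(T_{\mathbf F})=H^j(\mathbf F)$) with a canonical $L_\infty$-structure whose brackets are extracted from homotopy-homogeneity applied to the square-zero extensions $k\oplus(V\oplus W)\to k\oplus V$ — these measure the failure of $\mathbf F(k\oplus-)$ to be additive and hence encode the Massey-type products — and then prove $\mathbb R\mathrm{Def}_{T_{\mathbf F}}\simeq\mathbf F$ by induction on the Postnikov/length filtration of $A$, using that a homotopy-homogeneous functor is determined by its values on the square-zero extensions $k\oplus V$ (matched by construction) together with the gluing encoded by the $L_\infty$-structure; finally one rectifies $T_{\mathbf F}$ to a genuine dgla, since every $L_\infty$-algebra is quasi-isomorphic to one. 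Lurie's alternative bypasses the explicit brackets by identifying $\mathfrak{Def}^{\mathrm{Hin}}_k$ with the Koszul dual of $\mathfrak{dgLie}_k$ through Chevalley--Eilenberg cochains, but essential surjectivity stays the substantive point either way.

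Finally, for (c) it suffices to show $\mathbb R\mathrm{Def}$ induces weak equivalences $\mathrm{Map}_{\mathfrak{dgLie}_k}(\mathfrak g,\mathfrak h)\to\mathrm{Map}_{\mathfrak{Def}^{\mathrm{Hin}}_k}(\mathbb R\mathrm{Def}_{\mathfrak g},\mathbb R\mathrm{Def}_{\mathfrak h})$. I would model the source by the Maurer--Cartan space of the convolution $L_\infty$-algebra attached to $\mathfrak g$ and $\mathfrak h$ (for $\mathfrak g$ cofibrant, the $\mathrm{MC}$-space of $\mathfrak h$ tensored with the completed Chevalley--Eilenberg cochains of $\mathfrak g$); since $\mathbb R\mathrm{Def}_{\mathfrak h}(A)=\mathbb R\mathrm{MC}_{\mathfrak h\otimes\Omega^*_{\mathrm{DR}}(\Delta^*)}(A)$, the adjunction between polynomial forms and Chevalley--Eilenberg cochains identifies this with the derived space of natural transformations of the two nerves. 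A cheaper alternative is to reduce to generators: $\mathfrak{dgLie}_k$ is compactly generated by the free dg Lie algebras on $k[n]$, $n\in\mathbb Z$, whose images under $\mathbb R\mathrm{Def}$ corepresent $\mathbf F\mapsto H^{-n}(\mathbf F)$ and are (using (b)) compact generators downstairs; checking that the mapping spaces between these generators agree, together with the fact that $\mathbb R\mathrm{Def}$ preserves the relevant homotopy colimits, then upgrades by a presentability argument to full faithfulness on all of $\mathfrak{dgLie}_k$.
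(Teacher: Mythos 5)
The paper does not give a proof of this theorem: it appeals directly to \cite{Pr1} Corollary 4.56. So your sketch is not so much a different route from the paper's proof as a sketch of the cited result itself, which is a much more substantial task than the paper undertakes. Your three-part decomposition — well-definedness and descent to an $(\infty,1)$-functor; essential surjectivity; full faithfulness — matches the standard architecture in Hinich, Lurie and Pridham, and part (a) together with the convolution-algebra version of (c) are along the right lines (filtering by $\mathfrak m_A^{\bullet}$ to reduce homotopy invariance to the abelian Dold--Kan case, $\Omega^*_{\mathrm{DR}}(\Delta^{\bullet})$ as path object for the fibration property, convolution $L_\infty$-algebras for mapping spaces).

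The central step — extracting the $L_\infty$-structure on $T_{\mathbf F}$ in (b) — is however misdescribed. You say the brackets ``measure the failure of $\mathbf F(k\oplus-)$ to be additive'' over the projections $k\oplus(V\oplus W)\to k\oplus V$. But $k\oplus(V\oplus W)\simeq(k\oplus V)\times_k(k\oplus W)$, so homotopy-homogeneity applied here \emph{forces} $\mathbf F(k\oplus-)$ to be additive; that is precisely the content of the axiom, not its failure. The $L_\infty$-structure is not visible on square-zero extensions of $k$ at all: it is recovered from the behaviour of $\mathbf F$ on Artinian algebras with $\mathfrak m_A^2\neq 0$, i.e., from the obstruction maps along non-split small extensions. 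Making this precise — via simplicial deformation complexes (Pridham), Koszul duality with Chevalley--Eilenberg cochains (Lurie), or extended deformation functors (Manetti) — is where the bulk of the work sits in every one of the cited sources, so your outline currently replaces the hardest step by an incorrect heuristic. Two smaller cautions: in the generator version of (c), the index ``$H^{-n}(\mathbf F)$'' is off by a shift, since $H^i(\mathbb R\mathrm{Def}_{\mathfrak g})\simeq H^{i+1}(\mathfrak g)$ (Remark \ref{Del grpd coho}); and the assertion that $\mathbb R\mathrm{Def}$ preserves the relevant homotopy colimits is genuinely nontrivial — it cannot be deduced from the equivalence being proved, and $\mathbb R\mathrm{MC}$ is not obviously cocontinuous, so this needs an independent argument (or should be avoided by sticking with the convolution approach). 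None of this breaks the architecture, but the bracket-extraction step would have to be replaced before the sketch becomes a proof.
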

\begin{proof}
See \cite{Pr1} Corollary 4.56.
\end{proof}
Despite its great theoretical properties, the Hinich nerve is seldom handy enough to make concrete computations. For this reason, recall that the \emph{(derived) Deligne groupoid} associated to a differential graded Lie algebra $\mathfrak g$ is defined to be the formal groupoid
\begin{eqnarray*}
\mathrm{Del}_{\mathfrak g}:&\mathfrak{dgArt}^{\leq 0}&\xrightarrow{\hspace*{1cm}}\mathfrak{Grpd} \\
&A&\longmapsto\left[\nicefrac{\widetilde{\mathrm{MC}}_{\mathfrak g}\left(A\right)}{\widetilde{\mathrm{Gg}}_{\mathfrak g}\left(A\right)}\right]
\end{eqnarray*}
where
\begin{eqnarray} \label{MC}
\widetilde{\mathrm{MC}}_{\mathfrak g}:&\mathfrak{dgArt}_k^{\leq 0}&\xrightarrow{\hspace*{3cm}}\mathfrak{Set} \nonumber \\
&A&\mapsto\left\{x\in \left(\mathfrak g\otimes\mathfrak m_A\right)^1\quad\text{s.t.}\quad d\left(x\right)+\frac{1}{2}\left[x,x\right]=0\right\}
\end{eqnarray}
\begin{eqnarray} \label{Gg}
\widetilde{\mathrm{Gg}}_{\mathfrak g}:&\mathfrak{dgArt}_k^{\leq 0}&\xrightarrow{\hspace*{1cm}}\mathfrak{Grp} \nonumber \\
&A&\mapsto\mathrm{exp}\left(\left(\mathfrak g\otimes \mathfrak m_A\right)^0\right)
\end{eqnarray}
and let
\begin{eqnarray*}
\mathrm{BDel}_{\mathfrak g}:\mathfrak{dgArt}_k^{\leq 0}\longrightarrow\mathfrak{sSet}
\end{eqnarray*}
denote its nerve.
\begin{rem}
Notice that formula \eqref{MC} and formula \eqref{Gg} are just straightforward generalisations of the notions of Maurer-Cartan and gauge functor in underived Deformation Theory; these objects are used to define \emph{extended deformation functors} in the sense of Manetti (see \cite{Man1} or \cite{Man2}). In \cite{Pr1} Pridham also proved that there is an equivalence of $\left(\infty,1\right)$-categories between $\mathfrak{Def}_k^{\mathrm{Man}}$ and $\mathfrak{Def}_k^{\mathrm{Hin}}$.
\end{rem}
\begin{war}
The nerve of the Deligne groupoid associated to a differential graded Lie algebra is a derived pre-deformation functor but not a derived deformation functor: as a matter of fact it is not homotopic in general. Moreover, although it might be a bit confusing, we will tend to refer to both $\mathrm{Del}_{\mathfrak g}$ and $\mathrm{BDel}_{\mathfrak g}$ as the Deligne groupoid associated to the differential graded Lie algebra $\mathfrak g$.
\end{war}
Fix $\mathfrak g\in\mathfrak{dgLie}_k$: we can define the functor
\begin{eqnarray*}
&\underline{\mathrm{BDel}}_{\mathfrak g}:\mathfrak{dgArt}_k^{\leq 0}\xrightarrow{\hspace*{5cm}}\mathfrak{sSet}\qquad\qquad\qquad\qquad\qquad\qquad\qquad\qquad& \\
&\qquad\qquad\; A\longmapsto\mathrm{diag}\left(\xymatrix{\mathrm{BDel}_{\mathfrak g}\left(A\right)\ar[r] & \mathrm{BDel}_{\mathfrak g\otimes\Omega^*\left(\Delta^1\right)}\left(A\right)\ar@<2.5pt>[l]\ar@<-2.5pt>[l]\ar@<2.5pt>[r]\ar@<-2.5pt>[r] & \mathrm{BDel}_{\mathfrak g\otimes\Omega^*\left(\Delta^2\right)}\left(A\right)\ar[l]\ar@<5pt>[l]\ar@<-5pt>[l]\ar[r]\ar@<5pt>[r]\ar@<-5pt>[r] & \cdots\ar@<-7.5pt>[l]\ar@<-2.5pt>[l]\ar@<2.5pt>[l]\ar@<7.5pt>[l]}\right)&
\end{eqnarray*}
which is sometimes called the \emph{simplicial Deligne groupoid} of $\mathfrak g$.
\begin{thm} \label{Del grpd} \emph{(Pridham)}
Let $\mathfrak g$ be a differential graded Lie algebra concentrated in non-negative degrees; we have that
\begin{itemize}
\item the functor $\underline{\mathrm{BDel}}_{\mathfrak g}$ is a derived deformation functor;
\item the functor $\underline{\mathrm{BDel}}_{\mathfrak g}$ is the universal derived deformation functor under $\mathrm{BDel}_{\mathfrak g}$;
\item the functors $\underline{\mathrm{BDel}}_{\mathfrak g}$ and $\mathbb R\mathrm{Def}_{\mathfrak g}$ are weakly equivalent.
\end{itemize}
\end{thm}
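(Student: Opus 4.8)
The plan is to reduce the three assertions to a single computation, the weak equivalence of the third bullet, from which the first follows at once (the Hinich nerve is a Hinich derived deformation functor by Theorem \ref{compare def theories}) and the second is a formal consequence about the reflection onto derived deformation functors. Throughout fix $A\in\mathfrak{dgArt}_k^{\leq 0}$ and write $Z_{n,m}$ for the set of $m$-simplices of the nerve of the Deligne groupoid of the nilpotent dgla $\mathfrak g\otimes\Omega^*_{\mathrm{DR}}(\Delta^n)\otimes\mathfrak m_A$, so that $Z_{\bullet,\bullet}$ is a bisimplicial set with $Z_{n,\bullet}=\mathrm{BDel}_{\mathfrak g\otimes\Omega^*_{\mathrm{DR}}(\Delta^n)}(A)$ and $\underline{\mathrm{BDel}}_{\mathfrak g}(A)=\mathrm{diag}\,Z$. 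Note that, since $\Omega^*_{\mathrm{DR}}(\Delta^n)$ sits in degrees $0,\dots,n$, the dgla $\mathfrak g\otimes\Omega^*_{\mathrm{DR}}(\Delta^n)$ is again concentrated in non-negative degrees; this is used at every step.

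First I would verify homotopy-homogeneity directly, as it is elementary and independent of the hard comparison. For any dgla $\mathfrak h$ the functor $\mathrm{BDel}_{\mathfrak h}$ is \emph{strictly} homogeneous: because $\mathfrak m_{A\times_B C}=\mathfrak m_A\times_{\mathfrak m_B}\mathfrak m_C$, the assignments \eqref{MC} and \eqref{Gg} carry fibre products to fibre products, so $\mathrm{Del}_{\mathfrak h}(A\times_B C)=\mathrm{Del}_{\mathfrak h}(A)\times_{\mathrm{Del}_{\mathfrak h}(B)}\mathrm{Del}_{\mathfrak h}(C)$ as groupoids and hence $\mathrm{BDel}_{\mathfrak h}(A\times_B C)$ is the corresponding strict fibre product of simplicial sets. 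When $\mathfrak h$ is concentrated in non-negative degrees and $A\twoheadrightarrow B$ is a surjection, the gauge group $\widetilde{\mathrm{Gg}}_{\mathfrak h}(A)=\exp(\mathfrak h^0\otimes\mathfrak m_A^0)$ surjects onto $\widetilde{\mathrm{Gg}}_{\mathfrak h}(B)$, so $\mathrm{Del}_{\mathfrak h}(A)\to\mathrm{Del}_{\mathfrak h}(B)$ is an isofibration of groupoids and $\mathrm{BDel}_{\mathfrak h}(A)\to\mathrm{BDel}_{\mathfrak h}(B)$ is a Kan fibration; the strict fibre product above is therefore already a homotopy fibre product. Taking $\mathfrak h=\mathfrak g\otimes\Omega^*_{\mathrm{DR}}(\Delta^n)$ and using that $\mathrm{diag}$ — equivalently, geometric realisation of simplicial spaces — preserves homotopy-cartesian squares, one concludes that $\underline{\mathrm{BDel}}_{\mathfrak g}$ is homotopy-homogeneous; and $\underline{\mathrm{BDel}}_{\mathfrak g}(k)\simeq *$ is immediate since $\mathfrak m_k=0$. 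Thus $\underline{\mathrm{BDel}}_{\mathfrak g}$ will be a derived deformation functor as soon as it is shown to be homotopic, which will follow from its being weakly equivalent to $\mathbb R\mathrm{Def}_{\mathfrak g}$.

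For that equivalence I would first build the comparison map. The classical construction integrating a string of gauge transformations into a flat Maurer--Cartan element over the extra simplicial direction gives, for every dgla $\mathfrak h$, a natural map of simplicial sets $\mathrm{BDel}_{\mathfrak h}(A)\to\mathbb R\mathrm{MC}_{\mathfrak h\otimes\Omega^*_{\mathrm{DR}}(\Delta^\bullet)}(A)$; taking $\mathfrak h=\mathfrak g\otimes\Omega^*_{\mathrm{DR}}(\Delta^n)$, evaluating the target at simplicial level $n$, and then applying the multiplication $\Omega^*_{\mathrm{DR}}(\Delta^n)\otimes\Omega^*_{\mathrm{DR}}(\Delta^n)\to\Omega^*_{\mathrm{DR}}(\Delta^n)$ (a dg-algebra map, hence sending Maurer--Cartan elements to Maurer--Cartan elements) produces, compatibly with faces and degeneracies, a natural transformation $\underline{\mathrm{BDel}}_{\mathfrak g}\to\mathbb R\mathrm{Def}_{\mathfrak g}$. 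The main obstacle is to show this transformation is a levelwise weak equivalence, and this is where all the content sits: $\mathrm{BDel}_{\mathfrak g}$ itself is \emph{not} homotopic — on a derived Artinian ring the Deligne groupoid, being a $1$-groupoid, cannot carry the higher homotopy that $\mathbb R\mathrm{Def}_{\mathfrak g}$ must have — so one has to prove that the thickening $\mathrm{diag}_n\,\mathrm{BDel}_{\mathfrak g\otimes\Omega^*_{\mathrm{DR}}(\Delta^n)}$ supplies precisely the missing cells. I would do this by identifying $\mathrm{diag}\,Z$ with $\mathrm{hocolim}_{\Delta^{\mathrm{op}}}Z_{\bullet,m}$, using the Dupont contraction to make the Eilenberg--Zilber maps between $\Omega^*_{\mathrm{DR}}(\Delta^p)\otimes\Omega^*_{\mathrm{DR}}(\Delta^q)$ and $\Omega^*_{\mathrm{DR}}(\Delta^p\times\Delta^q)$ strict enough to control the resulting spectral sequence, and then arguing by induction up the Postnikov tower: because $\mathfrak g$ is concentrated in non-negative degrees, $\mathrm{BDel}_{\mathfrak g}$ already computes $\pi_0$ and $\pi_1$ correctly, and the hypothesis $\mathfrak g^{<0}=0$ is exactly what guarantees that the only further homotopy to be recovered is that arising from the negative-degree (derived) part of $A$, which the simplicial thickening by the contractible complex $\Omega^*_{\mathrm{DR}}(\Delta^\bullet)$ indeed produces — and produces no more. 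Granting this, $\underline{\mathrm{BDel}}_{\mathfrak g}\simeq\mathbb R\mathrm{Def}_{\mathfrak g}$, which is a derived deformation functor, so the first two bullets are established (homotopy-invariance being inherited through the equivalence).

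It remains to identify $\underline{\mathrm{BDel}}_{\mathfrak g}$, equipped with the canonical map from $\mathrm{BDel}_{\mathfrak g}$, as the universal derived deformation functor under $\mathrm{BDel}_{\mathfrak g}$. The canonical map is the one induced on diagonals by the unit $k\to\Omega^*_{\mathrm{DR}}(\Delta^\bullet)$ of simplicial differential graded algebras, $\mathrm{BDel}_{\mathfrak g}$ being viewed as constant in the first bisimplicial variable; composed with the equivalence of the previous step it is, up to homotopy, the evident map $\mathrm{BDel}_{\mathfrak g}\to\mathbb R\mathrm{Def}_{\mathfrak g}$ comparing the Deligne nerve with the Hinich nerve. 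Working in the category of functors $\mathfrak{dgArt}_k^{\leq 0}\to\mathfrak{sSet}$ with its homotopy-theoretic reflection onto derived deformation functors (\cite{Pr1}), one checks that this last map exhibits $\mathbb R\mathrm{Def}_{\mathfrak g}$ as the reflection of $\mathrm{BDel}_{\mathfrak g}$ — the two functors have the same $\pi_0$ and the map is a local equivalence — so the canonical map $\mathrm{BDel}_{\mathfrak g}\to\underline{\mathrm{BDel}}_{\mathfrak g}$ is a local equivalence with derived-deformation-functor target; equivalently, for every derived deformation functor $\mathbf G$ the restriction $\mathrm{Map}(\underline{\mathrm{BDel}}_{\mathfrak g},\mathbf G)\to\mathrm{Map}(\mathrm{BDel}_{\mathfrak g},\mathbf G)$ is a weak equivalence, which is the stated universal property.
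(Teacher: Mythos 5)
The paper offers no argument for this theorem: it simply cites \cite{Pr4} Section 3 for the first two bullets and \cite{Hin4} Section 3 for the third, so your proposal is an attempt to reprove cited results rather than to reproduce a proof in the text. The overall architecture you choose (strict homogeneity of the Deligne groupoid, a comparison map to the Hinich nerve, universality via Pridham's reflection onto derived deformation functors) is reasonable, but two steps are genuinely incomplete. The central one is the third bullet itself: you construct a plausible comparison map $\underline{\mathrm{BDel}}_{\mathfrak g}\to\mathbb R\mathrm{Def}_{\mathfrak g}$ and then outline a strategy (Dupont contraction, a spectral sequence for the diagonal, induction up the Postnikov tower), but the decisive claim --- that the thickening by $\Omega^*_{\mathrm{DR}}\left(\Delta^{\bullet}\right)$ supplies exactly the homotopy missing from the $1$-groupoid $\mathrm{BDel}_{\mathfrak g}$ and no more --- is precisely the content of Hinich's theorem, and you conclude with ``granting this''. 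Since in your scheme both the homotopicity of $\underline{\mathrm{BDel}}_{\mathfrak g}$ (first bullet) and the identification of the universal property (second bullet) are deduced from that equivalence, the entire argument rests on the step that is not carried out.

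The second gap is in your homogeneity argument: you assert that the diagonal of bisimplicial sets preserves homotopy-cartesian squares. This is false in general; it requires a Bousfield--Friedlander-type $\pi_*$-Kan hypothesis, which you neither state nor verify for the bisimplicial objects $n\mapsto\mathrm{BDel}_{\mathfrak g\otimes\Omega^*_{\mathrm{DR}}\left(\Delta^n\right)}\left(A\right)$, whose vertical $\pi_0$ and $\pi_1$ vary with $n$. The step is repairable --- for instance by replacing the diagonal with $\bar W$ of the simplicial Deligne groupoid, which is weakly equivalent to it by the Cegarra--Remedios result recalled in Section 3.3, and using that $\bar W$ is right Quillen so that strict pullbacks along fibrations of simplicial groupoids are homotopy pullbacks --- but as written the claim is unjustified. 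The remaining ingredients (strict homogeneity of $\widetilde{\mathrm{MC}}$ and $\widetilde{\mathrm{Gg}}$ via exactness of $-\otimes_k\mathfrak h$, Kan fibrancy of $\mathrm{BDel}$ on surjections from surjectivity of the gauge groups, and the reduction of the universal property to exhibiting a weak equivalence from $\mathrm{BDel}_{\mathfrak g}$ to a derived deformation functor within Pridham's framework) are sound, but they do not fill the two gaps above.
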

\begin{proof}
See \cite{Pr4} Section 3 for the proof of the first two claims, while the last statement is proved in \cite{Hin4} Section 3.
\end{proof}
As a consequence of Theorem \ref{Del grpd} we have that all geometric and homotopy-theoretic information concerning the Hinich nerve of a differential graded Lie algebra $\mathfrak g$ are completely determined by its associated Deligne groupoid, which is a much more down-to-earth object as it is essentially a formal groupoid. Unfortunately, as $\mathrm{BDel}_{\mathfrak g}$ does not map quasi-isomorphisms to weak equivalences, the description of higher tangent spaces we gave above in this section is no longer valid; nonetheless Pridham found a coherent way to define good cohomology theories for derived pre-deformation functors. As a matter of fact fix a derived pre-deformation functor $\mathbf F:\mathfrak{dgArt}_k^{\leq 0}\rightarrow\mathfrak{sSet}$ and define as in \cite{Pr2bis} Section 3.3 the $j$-th generalised tangent space of $\mathbf F$ to be 
\begin{equation*}
H^j\left(\mathbf F\right):=
\begin{cases} 
\pi_{-j}\left(\mathbf F\left(\frac{k\left[\varepsilon\right]}{\left(\varepsilon^2\right)}\right)\right) & \text{ if }j\leq 0 \\
\nicefrac{\pi_0\left(\mathrm{tan}\left(\mathbf F\left(k\left[i\right]\right)\right)\right)}{\pi_0\left(\mathrm{tan}\left(\mathbf F\left(\mathrm{cone}\left(k\left[i\right]\right)\right)\right)\right)} & \text{ otherwise } 
\end{cases}
\end{equation*}
which is seen to be consistent with the definition given above in this section in case $\mathbf F$ is also homotopic (see \cite{Pr2bis} Lemma 3.15). \\ Now fix $\mathfrak g$ to be a differential graded Lie algebra over $k$ concentrated in non-negative degrees and apply the above definitions to its Deligne groupoid. We have that
\begin{eqnarray*}
&H^{-1}\left(\mathrm{BDel}_{\mathfrak g}\right)=\pi_1\left(\mathrm{BDel}_{\mathfrak g}\left(\frac{k\left[\varepsilon\right]}{\left(\varepsilon^2\right)}\right)\right)=\pi_1\left(\left[\nicefrac{\widetilde{\mathrm{MC}}_{\mathfrak g}\left(\frac{k\left[\varepsilon\right]}{\left(\varepsilon^2\right)}\right)}{\widetilde{\mathrm{Gg}}_{\mathfrak g}\left(\frac{k\left[\varepsilon\right]}{\left(\varepsilon^2\right)}\right)}\right]\right)=& \\
&\pi_1\left(\left[\nicefrac{\mathrm{MC}_{\mathfrak g}\left(\frac{k\left[\varepsilon\right]}{\left(\varepsilon^2\right)}\right)}{\mathrm{Gg}_{\mathfrak g}\left(\frac{k\left[\varepsilon\right]}{\left(\varepsilon^2\right)}\right)}\right]\right)\simeq\mathrm{Stab}_{\mathrm{Gg}_{\mathfrak g}\left(\frac{k\left[\varepsilon\right]}{\left(\varepsilon^2\right)}\right)}\left(0\right)&
\end{eqnarray*}
but
\begin{eqnarray*}
&\mathrm{MC}_{\mathfrak g}\left(\frac{k\left[\varepsilon\right]}{\left(\varepsilon^2\right)}\right)=\left\{x\otimes\varepsilon\in\mathfrak g^1\otimes\frac{k\left[\varepsilon\right]}{\left(\varepsilon^2\right)}\text{ s.t. }d\left(x\right)=0\right\}=Z^1\left(\mathfrak g\right)\varepsilon& \\
&\mathrm{Gg}_{\mathfrak g}\left(\frac{k\left[\varepsilon\right]}{\left(\varepsilon^2\right)}\right)=\mathrm{exp}\left(\mathfrak g^0\otimes\left(\varepsilon\right)\right)\simeq\mathrm{Id}+\mathfrak g^0\varepsilon& 
\end{eqnarray*}
and notice that the gauge action just reduces to
\begin{eqnarray} \label{gauge level 0}
\mathrm{Id}+\mathfrak g^0\varepsilon\times Z^1\left(\mathfrak g\right)\varepsilon&\overset{*}{\xrightarrow{\hspace*{1cm}}}& \quad Z^1\left(\mathfrak g\right)\varepsilon \nonumber \\
\left(\mathrm{Id}+a\varepsilon,x\varepsilon\right)\quad&\longmapsto& \left(x+d\left(a\right)\right)\varepsilon
\end{eqnarray}
therefore we get
\begin{eqnarray*}
&\mathrm{Stab}_{\mathrm{Gg}_{\mathfrak g}\left(\frac{k\left[\varepsilon\right]}{\left(\varepsilon^2\right)}\right)}\left(0\right)=\left\{\left(\mathrm{Id}+a\varepsilon\right)\in\mathrm{Id}+\mathfrak g^0\varepsilon\,\text{ s.t. }\left(\mathrm{Id}+a\varepsilon\right)*0=0\right\}\simeq& \\
&\left\{a\in\mathfrak g^0\text{ s.t. }d\left(a\right)=0\right\}=Z^0\left(\mathfrak g\right)\simeq H^0\left(\mathfrak g\right)&
\end{eqnarray*}
where the last identification follows from the fact that $\mathfrak g$ lives in non-negative degrees. \\ 
Similarly we see that
\begin{eqnarray*}
&H^0\left(\mathrm{BDel}_{\mathfrak g}\right)=\pi_0\left(\mathrm{BDel}_{\mathfrak g}\left(\frac{k\left[\varepsilon\right]}{\left(\varepsilon^2\right)}\right)\right)=&\\
&\pi_0\left(\left[\nicefrac{\widetilde{\mathrm{MC}}_{\mathfrak g}\left(\frac{k\left[\varepsilon\right]}{\left(\varepsilon^2\right)}\right)}{\widetilde{\mathrm{Gg}}_{\mathfrak g}\left(\frac{k\left[\varepsilon\right]}{\left(\varepsilon^2\right)}\right)}\right]\right)= \nicefrac{\mathrm{MC}_{\mathfrak g}\left(\frac{k\left[\varepsilon\right]}{\left(\varepsilon^2\right)}\right)}{\mathrm{Gg}_{\mathfrak g}\left(\frac{k\left[\varepsilon\right]}{\left(\varepsilon^2\right)}\right)}\simeq\nicefrac{Z^1\left(\mathfrak g\right)\varepsilon}{\mathrm{Id}+\mathfrak g^0\varepsilon}&
\end{eqnarray*}
thus the quotient of $\mathrm{MC}_{\mathfrak g}\left(\frac{k\left[\varepsilon\right]}{\left(\varepsilon^2\right)}\right)$ under the gauge action \eqref{gauge level 0} is given by $H^1\left(\mathfrak g\right)$. \\
At last observe that $\mathrm{cone}\left(k\oplus k\left[j-1\right]\right)$ is a path object for $k\oplus k\left[j\right]$, so the same kind of computation gives us that for all $j\geq 0$
\begin{eqnarray*}
&H^j\left(\mathrm{BDel}_{\mathfrak g}\right)=\nicefrac{\pi_0\left(\mathrm{tan}\left(\mathrm{BDel}_{\mathfrak g}\left(k\left[i\right]\right)\right)\right)}{\pi_0\left(\mathrm{tan}\left(\mathrm{BDel}_{\mathfrak g}\left(\mathrm{cone}\left(k\left[i\right]\right)\right)\right)\right)}=&\\
&\nicefrac{Z^0\left(\mathfrak g\otimes\left(k\oplus k\left[j\right]\right)\right)}{Z^0\left(\mathfrak g\otimes\left(k\oplus\mathrm{cone}\left(k\oplus k\left[j-1\right]\right)\right)\right)}=\nicefrac{Z^{j+1}\left(\mathfrak g\right)}{\mathfrak g^j}&
\end{eqnarray*}
and again $\mathfrak g^j$ acts on $Z^{j+1}\left(\mathfrak g\right)$ by differentials, so the quotient is $H^{j+1}\left(\mathfrak g\right)$.
\begin{rem} \label{Del grpd coho}
Let $\mathfrak g$ be any differential graded Lie algebra; by combining Theorem \ref{Del grpd} and the above observations we have that  
\begin{eqnarray*}
&H^i\left(\mathbb R\mathrm{Def}_{\mathfrak g}\right)\simeq H^i\left(\mathrm{BDel}_{\mathfrak g}\right)=\overset{\overset{\quad d}{\quad\curvearrowleft}}{\nicefrac{Z^{i+1}}{g^i}}\simeq H^{i+1}\left(\mathfrak g\right)&\qquad\forall i\geq 0. \\
&H^{-1}\left(\mathbb R\mathrm{Def}_{\mathfrak g}\right)\simeq H^{-1}\left(\mathrm{BDel}_{\mathfrak g}\right)\simeq\mathrm{Stab}_{\mathrm{Gg}_{\mathfrak g^0}\left(\frac{k\left[\varepsilon\right]}{\left(\varepsilon^2\right)}\right)}\left(0\right)\simeq H^0\left(\mathfrak g\right)&
\end{eqnarray*}
\end{rem}
\subsection{The Algebraic Fiorenza-Manetti-Martinengo Period Map}
The above considerations give many motivations to try to lift the period map from a morphism of classical deformation functors to the context of Derived Deformation Theory; Fiorenza and Martinengo started to address such a question, tackling it from an entirely algebraic viewpoint. \\
Let $X$ still be a proper smooth scheme of dimension $d$ over a field $k$ of characteristic $0$ and, again, take the Cartan homotopy defined by the contraction of differential forms with vector fields
\begin{equation*}
i:\mathscr T_X\longrightarrow\mathcal End^*\left(\Omega^*_{X/k}\right)\left[-1\right]
\end{equation*}
and the Lie derivative
\begin{equation*}
l:\mathscr T_X\longrightarrow\mathcal End^*\left(\Omega^*_{X/k}\right)
\end{equation*}
which corresponds to the differential of $i$ in the $\mathrm{Hom}$ complex. Now consider the linear map of dgla's
\begin{equation*}
\tilde i:KS_X\simeq\mathbb R\Gamma\left(X,\mathscr T_X\right)\longrightarrow\mathrm{End}^*\left(\mathbb R\Gamma\left(X,\Omega^*_{X/k}\right)\right)\left[-1\right]
\end{equation*}
defined as the composition of $\mathbb R\Gamma\left(X,i\right)$ with the map 
\begin{equation*}
\mathbb R\Gamma\left(X,\mathcal End^*\left(\Omega^*_{X/k}\right)\right)\rightarrow\mathrm{End}^*\left(\mathbb R\Gamma\left(X,\Omega^*_{X/k}\right)\right)
\end{equation*}
induced by the action of derived global sections of the endomorphism sheaf of $\Omega^*_{X/k}$ on derived global sections of $X$: this is still a Cartan homotopy; denote by $\tilde l$ the associated morphism of dgla's, which is essentially the derived globalisation of the Lie derivative. Recall that $\mathrm{End}^{\geq 0}\left(\mathbb R\Gamma\left(X, \Omega^*_{X/k}\right)\right)$ is the differential graded Lie subalgebra of $\mathrm{End}^*\left(\mathbb R\Gamma\left(X,\Omega^*_{X/k}\right)\right)$ consisting of non-negatively graded endomorphisms of the (derived global sections of the) algebraic De Rham complex and that this is the same as the subalgebra of endomorphisms preserving the Hodge filtration. Notice also that the image of $\tilde l$ is contained in $\mathrm{End}^{\geq 0}\left(\mathbb R\Gamma\left(X,\Omega^*_{X/k}\right)\right)$, therefore in the end there is a diagram of dgla's
\begin{equation*}
\xymatrix{KS_X\simeq\mathbb R\Gamma\left(X,\mathscr T_X\right)\ar[r]^{\tilde l} & \mathrm{End}^{\geq 0}\left(\mathbb R\Gamma\left(X,\Omega^*_{X/k}\right)\right)\ar@<0.75ex>[r]^{\text{incl.}}\ar@<-0.75ex>[r]_0 & \mathrm{End}^*\left(\mathbb R\Gamma\left(X,\Omega^*_{X/k}\right)\right)}
\end{equation*}
where $0$ stands for the zero map. Since $\tilde i$ is a Cartan homotopy, by Remark \ref{rem Cart htpy} $e^{\tilde i}$ gives an homotopy between $\tilde l$ and the zero map, thus there is an induced morphism of dgla's to the homotopy fibre
\begin{equation} \label{FMM dgla}
KS_X\overset{\left(\tilde l,e^{\tilde i}\right)}{\xrightarrow{\hspace*{1cm}}}\underset{\longleftarrow}{\mathrm{holim}}\left(\mathrm{End}^{\geq 0}\left(\mathbb R\Gamma\left(X,\Omega^*_{X/k}\right)\right)\doublerightarrow{\quad}{\quad}\mathrm{End}^*\left(\mathbb R\Gamma\left(X,\Omega^*_{X/k}\right)\right)\right)
\end{equation}
as observed in \cite{FMar} Section 6; moreover, recall from formula \eqref{tgt hoflag} that a model for the above homotopy fibre is the abelian dgla
\begin{equation*}
\frac{\mathrm{End}^*\left(\mathbb R\Gamma\left(X,\Omega^*_{X/k}\right)\right)}{\mathrm{End}^{\geq 0}\left(\mathbb R\Gamma\left(X,\Omega^*_{X/k}\right)\right)}\left[-1\right].
\end{equation*}
\begin{defn} \label{alg Fio-Man}
Define the \emph{universal algebraic Fiorenza-Manetti local period map} to be the morphism of deformation functors
\begin{equation*}
\mathrm{FM}:\mathrm{Def}_{KS_X}\xrightarrow{\hspace*{1cm}}\mathrm{Def}_{\frac{\mathrm{End}^*\left(\mathbb R\Gamma\left(X,\Omega^*_{X/k}\right)\right)}{\mathrm{End}^{\geq 0}\left(\mathbb R\Gamma\left(X,\Omega^*_{X/k}\right)\right)}\left[-1\right]}
\end{equation*}
induced by map \eqref{FMM dgla}.
\end{defn}
\begin{defn} \label{alg Fio-Man-Mar}
Define the \emph{(universal) algebraic Fiorenza-Manetti-Martinengo local period map} to be the morphism of derived deformation functors
\begin{equation*}
\mathrm{FMM}:\mathbb R\mathrm{Def}_{KS_X}\xrightarrow{\hspace*{1cm}}\mathbb R\mathrm{Def}_{\frac{\mathrm{End}^*\left(\mathbb R\Gamma\left(X,\Omega^*_{X/k}\right)\right)}{\mathrm{End}^{\geq 0}\left(\mathbb R\Gamma\left(X,\Omega^*_{X/k}\right)\right)}\left[-1\right]}
\end{equation*}
induced by map \eqref{FMM dgla}.
\end{defn}
In Section 2.4 we described a universal version of the geometric period map (see Definition \ref{univ geom Fio-Man}), but we did not construct its Lie-theoretic counterpart: Fiorenza and Martinengo showed that this is precisely given by map \eqref{FMM dgla}.
\begin{thm} \label{Fiorenza-Martinengo} \emph{(Fiorenza-Martinengo)} 
The diagram
\begin{equation*}
\xymatrix{\mathbb R\mathrm{Def}_{KS_X}\ar[rrr]^{\mathrm{FMM}}\ar[d]_{\pi^0\pi_{\leq 0}} & & & \mathbb R\mathrm{Def}_{\frac{\mathrm{End}^*\left(\mathbb R\Gamma\left(X,\Omega^*_{X/k}\right)\right)}{\mathrm{End}^{\geq 0}\left(\mathbb R\Gamma\left(X,\Omega^*_X/k\right)\right)}\left[-1\right]}\ar[d]^{\pi^0\pi_{\leq 0}} \\
\mathrm{Def}_{KS_X}\ar[rrr]^{\mathrm{FM}}\ar[d]^{\wr} & & & \mathrm{Def}_{\frac{\mathrm{End}^*\left(\mathbb R\Gamma\left(X,\Omega^*_{X/k}\right)\right)}{\mathrm{End}^{\geq 0}\left(\mathbb R\Gamma\left(X,\Omega^*_{X/k}\right)\right)}\left[-1\right]}\ar[d]_{\wr} \\
\mathrm{Def}_X\ar[rrr]^{\mathcal P} & & & \mathrm{Flag}_{H^*\left(X,k\right)}^{F^{\bullet}}}
\end{equation*}
is well-defined and commutes.
\end{thm}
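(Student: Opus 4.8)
The plan is to verify the diagram one square at a time: the top square turns out to be essentially formal, while the bottom square reduces, component by component in the filtration index $p$, to the classical comparison of Theorem \ref{Fiorenza-Manetti}. For the top square, observe that by Definition \ref{alg Fio-Man} and Definition \ref{alg Fio-Man-Mar} both $\mathrm{FM}$ and $\mathrm{FMM}$ are induced by one and the same morphism of differential graded Lie algebras, namely \eqref{FMM dgla}; hence commutativity of the top square is precisely the compatibility of the truncation operator $\pi^0\pi_{\leq 0}$ with the assignment $\mathfrak g\mapsto\mathbb R\mathrm{Def}_{\mathfrak g}$, i.e. the existence of a natural identification $\pi^0\pi_{\leq 0}\,\mathbb R\mathrm{Def}_{\mathfrak g}\simeq\mathrm{Def}_{\mathfrak g}$ intertwining the functorial action on dgla maps. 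This is immediate from the definitions: for an ordinary local Artinian $k$-algebra $A$ one has $\mathbb R\mathrm{MC}_{\mathfrak g\otimes\Omega^*_{\mathrm{DR}}\left(\Delta^{\bullet}\right)}\left(A\right)=\mathrm{MC}_{\mathfrak g\otimes\Omega^*_{\mathrm{DR}}\left(\Delta^{\bullet}\right)}\left(A\right)$, whose set of connected components is exactly $\mathrm{MC}_{\mathfrak g}\left(A\right)$ modulo the equivalence generated by polynomial homotopies, that is $\mathrm{Def}_{\mathfrak g}\left(A\right)$, and this is plainly natural in $\mathfrak g$; applying it to $KS_X$ and to the abelian dgla $\frac{\mathrm{End}^*\left(\mathbb R\Gamma\left(X,\Omega^*_{X/k}\right)\right)}{\mathrm{End}^{\geq 0}\left(\mathbb R\Gamma\left(X,\Omega^*_{X/k}\right)\right)}\left[-1\right]$ settles the top square.

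For the bottom square, the lower-left vertical arrow is the isomorphism $\mathscr O$ of Theorem \ref{Donatella} and the lower-right vertical arrow is the chain of isomorphisms of Corollary \ref{und Flag dgla} (which rests on Proposition \ref{gen und Flag dgla}, formula \eqref{tgt hoflag} and map \eqref{flag htpy stability}). Since $\mathrm{Flag}_{H^*\left(X,k\right)}^{F^{\bullet}}$ is, by its very definition, a subfunctor of $\prod_p\mathrm{Grass}_{F^pH^*\left(X,k\right),H^*\left(X,k\right)}$ cut out by the flag condition, two morphisms into it coincide as soon as their components coincide, so it suffices to check the square after post-composition with each projection $\pi_p$. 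On the geometric side, $\pi_p\circ\mathcal P$ is, by Definition \ref{univ geom Fio-Man} together with Definition \ref{geom FM}, the $p$\textsuperscript{th} geometric Fiorenza--Manetti map $\mathcal P^p$ of Definition \ref{geom FM}. On the algebraic side, I would first note that the inclusions of dgla's $\mathrm{End}^{\geq 0}=\mathrm{End}^{F^{\bullet}}\hookrightarrow\mathrm{End}^{F^p}$ over $\mathrm{End}^*$ induce a morphism of homotopy cokernels $C^{F^{\bullet}}_{\mathbb R\Gamma\left(X,\Omega^*_{X/k}\right)}\to C_{\chi^p}$, which under the abelian models \eqref{tgt dgrass} and \eqref{tgt hoflag} is simply the shifted quotient projection, and which corresponds, via Theorem \ref{Grass=Def_C} and Proposition \ref{gen und Flag dgla}, to the forgetful map $\pi_p$ on deformation functors; next, by the Fiorenza--Manetti dictionary between the homotopy-fibre presentation built from $e^{\tilde i}$ (Remark \ref{rem Cart htpy}(4)) and the mapping-cone presentation built from $\xi\mapsto\left(l_{\xi},i_{\xi}\right)$, the composite of \eqref{FMM dgla} with this projection is homotopic to the map $\mathrm{fm}^p$ of \eqref{fm}. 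Crucially, by Remark \ref{Martinengo} the formality quasi-isomorphisms identifying the cones with their abelian models — including the one inside \eqref{h} — can be chosen uniformly in $p$, hence compatibly with the identifications used in Corollary \ref{und Flag dgla}. Feeding all of this into Theorem \ref{Fiorenza-Manetti} for each $p$ shows that $\pi_p$ applied to $\left(\text{lower-right iso}\right)\circ\mathrm{FM}$ equals $\mathcal P^p\circ\mathscr O$, i.e. equals $\pi_p$ applied to $\mathcal P\circ\mathscr O$, which is what we want. Well-definedness of all the arrows is inherited from the results quoted above; the only point worth recording is that the tuples produced by $\mathcal P$ and by $\mathrm{FM}$ are genuine flags, which is built into Definition \ref{univ geom Fio-Man} and Definition \ref{geom FM} on one side and into the definition of $\mathrm{Flag}^{F^{\bullet}}$ on the other.

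The hard part will be precisely the algebraic-side compatibility in the second paragraph: one must reconcile the \emph{single} strict dgla morphism \eqref{FMM dgla} — a morphism into a homotopy fibre obtained from the exponential $e^{\tilde i}$ of a Cartan homotopy, involving the non-negatively graded endomorphisms $\mathrm{End}^{\geq 0}$ — with the family of $L_{\infty}$-morphisms $\mathrm{fm}^p$ into the mapping cones $C_{\chi^p}$ built from the $F^p$-preserving endomorphisms, and to do so coherently in $p$ so that the flag condition is respected and the formality identifications of Remark \ref{formal} and Remark \ref{Martinengo} intertwine with the cone projections and with \eqref{h}. Once this bookkeeping is in place, the theorem follows formally from Theorem \ref{Donatella}, Theorem \ref{Grass=Def_C}, Theorem \ref{Fiorenza-Manetti} and Corollary \ref{und Flag dgla}.
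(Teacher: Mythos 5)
The paper's own proof of this statement is a bare citation to \cite{FMar} Section~6, so there is no in-text argument to compare against; you are therefore reconstructing that argument from scratch. Your top-square argument is correct and complete: both $\mathrm{FM}$ and $\mathrm{FMM}$ are by Definition \ref{alg Fio-Man} and Definition \ref{alg Fio-Man-Mar} induced by the \emph{same} dgla morphism \eqref{FMM dgla}, and the natural identification $\pi^0\pi_{\leq 0}\,\mathbb R\mathrm{Def}_{\mathfrak g}\simeq\mathrm{Def}_{\mathfrak g}$, functorial in $\mathfrak g$, does the rest. Your reduction of the bottom square to the $\pi_p$-components of $\mathrm{Flag}^{F^{\bullet}}$ is also correctly set up: $\pi_p\circ\mathcal P=\mathcal P^p$ on the nose, the inclusion $\mathrm{End}^{\geq 0}\hookrightarrow\mathrm{End}^{F^p}$ over $\mathrm{End}^*$ does induce the shifted quotient projection on the abelian models \eqref{tgt dgrass}, \eqref{tgt hoflag}, and Remark \ref{Martinengo} does supply the needed uniformity in $p$ of the formality identifications.

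The genuine gap is the step you yourself flag as ``the hard part'' and then phrase as a dictionary rather than a proof: you assert that the composite of the strict dgla morphism $\left(\tilde l,e^{\tilde i}\right)$ of \eqref{FMM dgla} with the projection $C^{F^{\bullet}}_{\mathbb R\Gamma\left(X,\Omega^*_{X/k}\right)}\to C_{\chi^p}$ is homotopic, \emph{as an $L_{\infty}$-morphism}, to $\mathrm{fm}^p\colon\xi\mapsto\left(l_{\xi},i_{\xi}\right)$, and that this homotopy is compatible with the quasi-isomorphism $h$ of \eqref{h}. That compatibility between the exponential/homotopy-fibre presentation of Remark \ref{rem Cart htpy}(4) and the mapping-cone presentation of \eqref{cone formula} is precisely the nontrivial content of \cite{FMar} Section~6; stating it as ``the Fiorenza-Manetti dictionary'' does not establish it, so at the decisive moment your argument quietly collapses back to the same citation the paper gives. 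To genuinely close the gap you would need to exhibit the $L_{\infty}$-homotopy explicitly (using the higher products behind Proposition \ref{mapping cone} and the Cartan-homotopy identities of Definition \ref{Cart htpy}) and check that it is carried along by the $p$-uniform formality quasi-isomorphisms of Remark \ref{formal} and Remark \ref{Martinengo} before appealing to Theorem \ref{Fiorenza-Manetti}.
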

\begin{proof}
See \cite{FMar} Section 6.
\end{proof}
\subsection{Affine DG$_{\geq 0}$-Categories and the Dold-Kan Correspondence}
Theorem \ref{Fiorenza-Martinengo} says that morphism $\mathrm{FMM}$ is the correct derived enhancement of the universal Fiorenza-Manetti local period map; however the geometric interpretation of such a result is somehow indirect, thus it would be worth to find an equivalent morphism of derived deformation functors having more evident geometric meaning. Of course the key step in order to do this consists of finding the right domain and codomain for such a morphism, i.e. defining two derived deformation functors $\mathbb R\mathrm{Def}_X$ and $\mathrm{hoFlag}_{\mathbb R\Gamma\left(X,\Omega^*_{X/k}\right)}^{F^{\bullet}}$ such that
\begin{itemize}
\item $\mathbb R\mathrm{Def}_X$ is weakly equivalent to $\mathbb R\mathrm{Def}_{KS_X}$ and similarly $\mathrm{hoFlag}_{\mathbb R\Gamma\left(X,\Omega^*_{X/k}\right)}^{F^{\bullet}}$ is weakly equivalent to $\mathbb R\mathrm{Def}_{\frac{\mathrm{End}^*\left(\mathbb R\Gamma\left(X,\Omega^*_{X/k}\right)\right)}{\mathrm{End}^{\geq 0}\left(\mathbb R\Gamma\left(X,\Omega^*_{X/k}\right)\right)}\left[-1\right]}$;
\item $\mathbb R\mathrm{Def}_X$ and $\mathrm{hoFlag}_{\mathbb R\Gamma\left(X,\Omega^*_{X/k}\right)}^{F^{\bullet}}$ are derived enhancements of $\mathrm{Def}_X$ and $\mathrm{Flag}_{H^*\left(X,k\right)}^{F^{\bullet}}$, respectively.
\end{itemize}
In order to construct such functors we need some homotopy-theoretic background. 
\begin{war}
In this section we will deal with non-negatively graded differential graded chain structures rather than non-positively graded cochain ones, though the pictures they provide are largely equivalent; the reason for this lies in the fact that -- at least in the framework of this paper -- the codomain of a derived deformation functor is the simplicial model category of simplicial sets, which is more directly related to chain structures than cochain ones.
\end{war}
First of all, recall that the \emph{normalisation} of a simplicial $k$-vector space $\left(V_{\bullet},\partial_i,\sigma_j\right)$ is defined to be the non-negatively graded chain complex of $k$-vector spaces $\left(\mathbf NV,\delta\right)$ where
\begin{equation} \label{norm}
\left(\mathbf NV\right)_n:=\bigcap_i\mathrm{ker}\left(\partial_i:V_n\rightarrow V_{n-1}\right)
\end{equation}
and $\delta_n:=\left(-1\right)^n\partial_n$. Moreover, given a map $f:V_{\bullet}\rightarrow W_{\bullet}$ of simplicial $k$-vector spaces, we can define the chain map
\begin{equation*}
\mathbf N\left(f\right):\mathbf NV_{\bullet}\longrightarrow\mathbf NW_{\bullet}
\end{equation*}
identified by the relation $\mathbf N\left(f\right)_n:=f_n|_{\mathbf NV_n}$; notice that this construction gives us a well-defined morphism of chain complexes. In the end, there is a normalisation functor
\begin{equation*}
\mathbf N:\mathfrak{sVect}_k\longrightarrow\mathfrak{Ch}_{\geq 0}\left(\mathfrak{Vect}_k\right).
\end{equation*}
at our disposal. \\
On the other hand, let $V$ be a chain complex of $k$-vector spaces and recall that its \emph{denormalisation} is defined to be the simplicial vector space $\left(\left(\mathbf KV\right)_{\bullet},\partial_i,\sigma_j\right)$ given in level $n$ by the vector space 
\begin{equation*}
\left(\mathbf KV\right)_n:=\underset{\eta\text{ surjective}}{\underset{\eta\in\mathrm{Hom}_{\Delta}\left(\left[p\right],\left[n\right]\right)}{\prod}}V_p\left[\eta\right]\qquad\qquad\qquad \left(V_p\left[\eta\right]\simeq V_p\right).
\end{equation*}
\begin{rem} \label{denorm}
Notice that
\begin{equation*}
\left(\mathbf KV\right)_n\simeq V_0\oplus V_1^{\oplus n}\oplus V_2^{\oplus\binom{n}{2}}\oplus\cdots\oplus V_k^{\oplus\binom{n}{k}}\oplus\cdots\oplus V_n^{\oplus\binom{n}{n}}.
\end{equation*}
\end{rem}
In order to complete the definition of the denormalisation of $V$ we need to define face and degeneracy maps: we will describe a combinatorial procedure to determine all of them. For all morphisms $\alpha:\left[m\right]\rightarrow\left[n\right]$ in $\Delta$, we want to define a linear map $\mathbf K\left(\alpha\right):\left(\mathbf KV\right)_n\rightarrow\left(\mathbf KV\right)_m$; this will be done by describing all restrictions $\mathbf K\left(\alpha,\eta\right):V_p\left[\eta\right]\rightarrow\left(\mathbf KV\right)_m$, for any surjective non-decreasing map $\eta\in\mathrm{Hom}_{\Delta}\left(\left[p\right],\left[n\right]\right)$. \\
For all such $\eta$, take the composite $\eta\circ\alpha$ and consider its epi-monic factorisation\footnote{The existence of such a decomposition is one of the key properties of the category $\Delta$.} $\epsilon\circ\eta'$, as in the diagram
\begin{equation*}
\xymatrix{\left[m\right]\ar[r]^{\alpha}\ar[d]_{\eta'} & \left[n\right]\ar[d]^{\eta} \\
\left[q\right]\ar[r]^{\epsilon} & \left[p\right].}
\end{equation*}
Now
\begin{itemize}
\item if $p=q$ (in which case $\epsilon$ is just the identity map), then set $\mathbf K\left(\alpha,\eta\right)$ to be the natural identification of $V_p\left[\eta\right]$ with the summand $V_p\left[\eta'\right]$ in $\left(\mathbf KV\right)_m$;
\item if $p=q+1$ and $\epsilon$ is the unique injective non-decreasing map from $\left[p\right]$ to $\left[p+1\right]$ whose image misses $p$, then set $\mathbf K\left(\alpha,\eta\right)$ to be the differential $d_p:V_p\rightarrow V_{p-1}$;
\item in all other cases set $\mathbf K\left(\alpha,\eta\right)$ to be the zero map.
\end{itemize}
The above constructions determine the whole of the simplicial vector space $\left(\left(\mathbf KV\right)_{\bullet},\partial_i,\sigma_j\right)$. As done for normalisation, for any chain map $f:V\rightarrow W$ we can define a morphism of simplicial $k$-vector spaces
\begin{equation*}
\mathbf K\left(f\right):\mathbf KV\longrightarrow\mathbf KW
\end{equation*}
by setting
\begin{eqnarray*}
&V_0\times V_1^{\oplus n}\times V_2^{\oplus\binom{n}{2}}\times\cdots\times V_n^{\oplus\binom{n}{n}}&\overset{\mathbf K\left(f\right)_n}{\xrightarrow{\hspace*{1cm}}}\qquad\;\; W_0\times W_1^{\oplus n}\times W_2^{\oplus\binom{n}{2}}\times\cdots\times W_n^{\oplus\binom{n}{n}} \\
&\left(v_0,\left(v_1^i\right)_i,\left(v_2^j\right)_j,\ldots,v_n\right)&\longmapsto\qquad\left(f_0\left(v_0\right),\left(f_1\left(v_1^i\right)\right)_i,\left(f_2\left(v_2^j\right)\right)_j,\ldots,f_n\left(v_n\right)\right).
\end{eqnarray*}
Again, there is a denormalisation functor 
\begin{equation*}
\mathbf K:\mathfrak{Ch}_{\geq 0}\left(\mathfrak{Vect}_k\right)\longrightarrow\mathfrak{sVect}_k.
\end{equation*}
at our disposal.
\begin{thm} \label{Dold-Kan} \emph{(Dold, Kan)}
The functors $\mathbf N$ and $\mathbf K$ form an equivalence of categories between $\mathfrak{sVect}_k$ and $\mathfrak{Ch}_{\geq 0}\left(\mathfrak{Vect}_k\right)$. 
\end{thm}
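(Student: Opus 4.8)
The plan is to produce natural isomorphisms $\mathbf{N}\mathbf{K}\cong\mathrm{id}_{\mathfrak{Ch}_{\geq0}(\mathfrak{Vect}_k)}$ and $\mathbf{K}\mathbf{N}\cong\mathrm{id}_{\mathfrak{sVect}_k}$, both of which I would derive from a single structural fact about an arbitrary simplicial vector space $V_\bullet$. Write $D_nV:=\sum_{j}\sigma_j(V_{n-1})\subseteq V_n$ for the span of the degenerate elements, and for a surjection $\eta\colon[n]\twoheadrightarrow[p]$ in $\Delta$ let $\sigma_\eta\colon V_p\to V_n$ be the associated iterated degeneracy operator. The claim is that there is a direct sum decomposition $V_n=\bigoplus_{\eta}\sigma_\eta\big((\mathbf{N}V)_p\big)$ indexed by all such $\eta$, in which each $\sigma_\eta$ restricted to $(\mathbf{N}V)_p$ is injective; in particular $V_n=(\mathbf{N}V)_n\oplus D_nV$, the summand $\eta=\mathrm{id}_{[n]}$ sitting against all the others.

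To establish this I would first show that the inclusion $D_nV\hookrightarrow V_n$ admits a natural retraction --- equivalently $V_n=(\mathbf{N}V)_n\oplus D_nV$ --- by exhibiting an explicit idempotent on $V_n$ with image $(\mathbf{N}V)_n$ and kernel $D_nV$, assembled as a suitably ordered composite of the operators $\mathrm{id}-\sigma_j\partial_j$ and verified against the simplicial identities alone. The refined decomposition then follows by induction on $n$, peeling off one degeneracy at a time and invoking uniqueness of the epi-mono factorization in $\Delta$, equivalently the fact that every degenerate simplex is $\sigma_\eta$ of a unique simplex that is non-degenerate in the appropriate sense.

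Granting the decomposition, $\mathbf{K}\mathbf{N}\cong\mathrm{id}$ is immediate on underlying graded vector spaces, since by the very definition of $\mathbf{K}$ one has $(\mathbf{K}\mathbf{N}V)_n=\bigoplus_{\eta\colon[n]\twoheadrightarrow[p]}(\mathbf{N}V)_p[\eta]$, which is exactly the right-hand side above; it then remains to check that the combinatorial recipe for $\mathbf{K}(\alpha,\eta)$ --- identity, the differential $\delta_p=(-1)^p\partial_p$, or zero, according to the epi-mono factorization of $\eta\circ\alpha$ --- reproduces the genuine face and degeneracy operators of $V$ under this identification, which is a direct manipulation of the simplicial identities (the ``identity'' and ``zero'' clauses are formal, and the ``$p=q+1$'' clause is precisely where the last face map and the sign convention enter). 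Dually, for a chain complex $C$ the summand of $(\mathbf{K}C)_n$ indexed by $\eta=\mathrm{id}_{[n]}$ is $C_n$ while every other summand sits inside $D_n(\mathbf{K}C)$, so applying the decomposition to $V=\mathbf{K}C$ gives $(\mathbf{N}\mathbf{K}C)_n\cong C_n$, and matching differentials again reduces to the ``$p=q+1$'' clause in the definition of the face maps of $\mathbf{K}C$, which matches $(-1)^n\partial_n$ with $d_n$ after the evident sign normalization. Naturality of all these identifications is automatic, since $\mathbf{N}$ and $\mathbf{K}$ are assembled levelwise out of restrictions and direct sums along maps compatible with the structure morphisms. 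The one genuinely substantial point is the decomposition itself --- showing the $\sigma_\eta$ are injective on $\mathbf{N}V$ with linearly independent images jointly spanning $D_nV$ --- which needs a careful inductive argument with the simplicial identities; everything downstream is bookkeeping with factorizations in $\Delta$ and signs.
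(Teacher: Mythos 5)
Your proof sketch is correct and is the standard argument for the Dold--Kan correspondence; the paper itself gives no proof but simply cites Weibel Theorem 8.4.1 and Goerss--Jardine, and your outline --- the idempotent built from the $\mathrm{id}-\sigma_j\partial_j$ giving $V_n=(\mathbf{N}V)_n\oplus D_nV$, the refined decomposition $V_n\cong\bigoplus_{\eta}\sigma_\eta((\mathbf{N}V)_p)$ indexed by surjections in $\Delta$, and then matching both composites against the combinatorial definition of $\mathbf{K}$ via epi-mono factorizations --- is exactly the route those references take.
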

\begin{proof}
See \cite{GJ} Corollary 2.3 or \cite{We} Theorem 8.4.1.
\end{proof}
The Dold-Kan correspondence described in Theorem \ref{Dold-Kan} is known to induce a number of very interesting $\infty$-equivalences; for instance the Eilenberg-Zilber shuffle product and the Alexander-Whitney map, which we will discuss in more details later in this section, allow us to extend normalisation and denormalisation to a pair of functors
\begin{equation*}
\mathbf N:\mathfrak{sAlg}_k\rightleftarrows\mathfrak{dg}_{\geq 0}\mathfrak{Alg}_k:\mathbf K
\end{equation*}
which is seen to be a Quillen equivalence. Moreover recall that 
\begin{itemize}
\item a $\mathrm{dg}_{\geq 0}$-category over $k$ is a category enriched in $\mathfrak{Ch}_{\geq 0}\left(\mathfrak{Vect}_k\right)$;
\item a $k$-simplicial category is a category enriched in $\mathfrak{sVect}_k$;
\item a simplicial category is a category enriched in $\mathfrak{sSet}$;
\item a simplicial groupoid is a simplicial object in $\mathfrak{Grpd}$: equivalently a simplicial groupoid is a simplicial category in which all $1$-morphisms are invertible.
\end{itemize}
All the above structures form well-understood model categories; furthermore it is well-known in the homotopy-theoretic folklore that Theorem \ref{Dold-Kan} induces a Quillen equivalence
\begin{equation} \label{DKcat}
\mathbf N:\mathfrak{sCat}_k\rightleftarrows\mathfrak{dg}_{\geq 0}\mathfrak{Cat}_k:\mathbf K.\footnote{There is some abuse of notation in this statement.}
\end{equation}
Tabuada also constructed an explicit Quillen equivalence between $\mathfrak{dg}_{\geq 0}\mathfrak{Cat}_k$ and $\mathfrak{sCat}$ (see \cite{Tab}, where an explicit proof of formula \eqref{DKcat} can be found, as well). \\
We will use slightly more general versions of the Dold-Kan correspondence provided by Theorem \ref{Dold-Kan} and its corollaries, so we need to develop a few technical tools. \\
Define $\mathfrak{Aff}_k$ to be the category whose objects are $k$-vector spaces and whose morphisms are affine maps between $k$-vector spaces, i.e.
\begin{equation*}
\mathrm{Hom}_{\mathfrak{Aff}_k}\left(V,W\right):=\left\{v\mapsto f\left(v\right)+b\text{ s.t. }f\text{ linear},b\in W\right\}\simeq\mathrm{Hom}_{\mathfrak{Vect}_k}\left(V,W\right)\times W.
\end{equation*}
$\mathfrak{Aff}_k$ can be thought of as the category of affine spaces over $k$ and affine maps. Given $V,W\in\mathfrak{Aff}_k$, define their tensor product to be
\begin{equation} \label{tens aff}
V\tilde{\otimes}W:=V\oplus W\oplus\left(V\otimes W\right)
\end{equation}
where the tensor product $V\otimes W$ in the right-hand side of formula \eqref{tens aff} is just the tensor product as vector spaces; in a similar way, given two affine maps
\begin{eqnarray*}
&\phi\in\mathrm{Hom}_{\mathfrak{Aff}_k}\left(V,W\right)\quad\text{where}\quad\phi\left(v\right):=f\left(v\right)+b& \\ 
&\psi\in\mathrm{Hom}_{\mathfrak{Aff}_k}\left(U,Z\right)\quad\text{where}\quad\psi\left(u\right):=g\left(u\right)+d&
\end{eqnarray*}
the tensor product map is given by
\begin{eqnarray} \label{und tens morph aff}
\phi\otimes\psi:&V\oplus W\oplus \left(V\otimes W\right)&\longrightarrow\quad U\oplus Z\oplus \left(U\otimes Z\right)\nonumber \\
&\left(u,v,x\otimes y\right)&\mapsto\left(u+b,v+d,f\left(x\right)\otimes g\left(y\right)\right).
\end{eqnarray}
Formula \eqref{tens aff} and formula \eqref{und tens morph aff} determine a monoidal structure on $\mathfrak{Aff}_k$: we will be more precise about this a little bit further in this section, when dealing with dg$_{\geq 0}$-affine spaces.
\begin{defn} \label{quasi dg vect}
Define a \emph{(chain) differential graded affine space} over $k$ in non-negative degrees (\emph{$dg_{\geq 0}$-affine space} for short) to be a pair $\left(A_0,V\right)$ where
\begin{equation*}
V:\qquad V_0\overset{d}{\longleftarrow} V_1\overset{d}{\longleftarrow} V_2\overset{d}{\longleftarrow}\cdots\overset{d}{\longleftarrow} V_n\overset{d}{\longleftarrow}
\end{equation*}
is a non-negatively graded chain complex of $k$-vector spaces and $A_0$ is an affine space over $k$ whose difference vector space is $V_0$.
\end{defn}
If $\left(A_0,V\right)$ and $\left(B_0,W\right)$ are $\mathrm{dg}_{\geq 0}$-affine spaces over $k$, a morphism $\phi:\left(A_0,V\right)\rightarrow \left(B_0,W\right)$ will be a chain map which is affine in degree $0$ and linear in higher degrees: more formally the set of morphisms between $\left(A_0,V\right)$ and $\left(B_0,W\right)$ is defined to be
\begin{equation*}
\mathrm{Hom}_{\mathfrak{Ch}_{\geq 0}\left(\mathfrak{Aff}_k\right)}\left(\left(A_0,V\right),\left(B_0,W\right)\right):=\left\{\underline v\mapsto f\left(\underline v\right)+b\text{ s.t. }f\in\mathrm{Hom}_{\mathfrak{Ch}_{\geq 0}\left(\mathfrak{Vect}_k\right)}\left(V,W\right),b\in W_0\right\}.
\end{equation*}
In the end we have a well-defined category of $\mathrm{dg}_{\geq 0}$-affine spaces over $k$, which we will denote as $\mathfrak{Ch}_{\geq 0}\left(\mathfrak{Aff}_k\right)$.
\begin{rem}
We have defined the objects of $\mathfrak{Ch}_{\geq 0}\left(\mathfrak{Aff}_k\right)$ as pairs where the first term is an affine space and the second term is a chain complex of vector spaces just to make the affine structure explicit; an equivalent and more compact characterisation of $\mathfrak{Ch}_{\geq 0}\left(\mathfrak{Aff}_k\right)$ is 
\begin{eqnarray*}
&\mathrm{Ob}\left(\mathfrak{Ch}_{\geq 0}\left(\mathfrak{Aff}_k\right)\right):=\mathrm{Ob}\left(\mathfrak{Ch}_{\geq 0}\left(\mathfrak{Vect}_k\right)\right)& \\
&\mathrm{Hom}_{\mathfrak{Ch}_{\geq 0}\left(\mathfrak{Aff}_k\right)}\left(\left(A_0,V\right),\left(B_0,W\right)\right)\simeq\mathrm{Hom}_{\mathfrak{Ch}_{\geq 0}\left(\mathfrak{Vect}_k\right)}\left(V,W\right)\times W_0.&
\end{eqnarray*}
In particular $\mathfrak{Ch}_{\geq 0}\left(\mathfrak{Aff}_k\right)$ is a $k$-linear category.
\end{rem}
The category $\mathfrak{Ch}_{\geq 0}\left(\mathfrak{Aff}_k\right)$ is both complete and cocomplete: limits and colimits are constructed from those in $\mathfrak{Ch}_{\geq 0}\left(\mathfrak{Vect}_k\right)$. For example if $\left(A_0,V\right)$ and $\left(B_0,W\right)$ are $\mathrm{dg}_{\geq 0}$-affine spaces their product will be just $\left(A_0\times B_0,V\times W\right)$, where $V\times W$ is the product of $V$ and $W$ in $\mathfrak{Ch}_{\geq 0}\left(\mathfrak{Vect}_k\right)$ and $A_0\times B_0$ is the affine space over $k$ whose difference vector space is $V_0\times W_0$. \\
We can also put a tensor structure over $\mathfrak{Ch}_{\geq 0}\left(\mathfrak{Aff}_k\right)$: given two $\mathrm{dg}_{\geq 0}$-affine spaces $\left(A_0,V\right)$ and $\left(B_0,W\right)$, define their tensor product $\left(A_0,V\right)\otimes\left(B_0,W\right)$ to be the $\mathrm{dg}_{\geq 0}$-affine space determined by the chain complex
\begin{equation} \label{tens struc aff}
V\oplus W\oplus \left(V\otimes W\right).
\end{equation}
Similarly, given 
\begin{eqnarray*}
&\phi\in\mathrm{Hom}_{\mathfrak{Ch}_{\geq 0}\left(\mathfrak{Aff}_k\right)}\left(\left(A_0,V\right),\left(B_0,W\right)\right)\quad\text{where}\quad\phi\left(\underline v\right):=f\left(\underline v\right)+b& \\ 
&\psi\in\mathrm{Hom}_{\mathfrak{Ch}_{\geq 0}\left(\mathfrak{Aff}_k\right)}\left(\left(C_0,U\right),\left(D_0,Z\right)\right)\quad\text{where}\quad\psi\left(\underline u\right):=g\left(\underline u\right)+d&
\end{eqnarray*}
the tensor product map is given by
\begin{eqnarray} \label{tens morph aff}
\phi\otimes\psi:&V\oplus W\oplus \left(V\otimes W\right)&\longrightarrow\quad U\oplus Z\oplus \left(U\otimes Z\right)\nonumber \\
&\left(\underline u,\underline v,\underline x\otimes \underline y\right)&\mapsto\left(\underline u+b,\underline v+d,f\left(\underline x\right)\otimes g\left(\underline y\right)\right).
\end{eqnarray}
Formula \eqref{tens struc aff} and formula \eqref{tens morph aff} determine a monoidal structure on $\mathfrak{Ch}_{\geq 0}\left(\mathfrak{Aff}_k\right)$: in particular the unit is given by the object $\left(\{*\},0\right)$, the associator is induced by the monoidal structure on $\mathfrak{Ch}_{\geq 0}\left(\mathfrak{Vect}_k\right)$ and the unitors are simply given by
\begin{equation*}
\left(V\oplus 0\oplus\left(V\otimes 0\right)\right)\overset{\sim}{\longrightarrow}V
\end{equation*}
\begin{equation*}
\left(0\oplus V\oplus\left(0\otimes V\right)\right)\overset{\sim}{\longrightarrow}V.
\end{equation*}
The reader can check that the above definitions verify the pentagon and the triangle identities.
\begin{rem}
Let $\left(A_0,V\right)$ and $\left(B_0,W\right)$ be dg$_{\geq 0}$-affine spaces: notice that
\begin{equation} \label{coh}
\left(A_0,V\right)\otimes\left(B_0,W\right)\simeq A_0\tilde{\otimes}B_0
\end{equation}
where formula \eqref{coh} is a canonical identification in $\mathfrak{Aff}_k$; an analogous coherence statement holds for morphisms.
\end{rem}
\begin{defn}
Define a \emph{simplicial affine space} over $k$ to be just a simplicial object in $\mathfrak{Aff}_k$.
\end{defn}
Let $\mathfrak{sAff}_k$ be the category of simplicial affine spaces over $k$, i.e.
\begin{equation*}
\mathfrak{sAff}_k:=\mathfrak{Aff}_k^{\Delta^{\mathrm{op}}}.
\end{equation*}
\begin{rem} \label{sadj}
There is a natural linearisation functor
\begin{equation*}
\mathbf L:\mathfrak{sAff}_k\longrightarrow\mathfrak{sVect}_k
\end{equation*}
which just deletes the non-linear part in the face and degeneracy maps defining a simplicial affine space, as well as the non-linear part of morphisms between simplicial affine spaces; in the same fashion there is a forgetful functor
\begin{equation*}
\mathbf U:\mathfrak{sVect}_k\longrightarrow\mathfrak{sAff}_k
\end{equation*} 
which just takes (maps of) simplicial vector spaces and looks at them as (maps of) simplicial affine ones.
\end{rem}
\begin{war}
The pair of functors $(\mathbf U,\mathbf L)$ does not provide an adjunction between $\mathfrak{sAff}_k$ and $\mathfrak{sVect}_k$.
\end{war}
The category $\mathfrak{sAff}_k$ has all small limits and colimits, which are just taken levelwise; moreover define the tensor product in $\mathfrak{sAff}_k$ to be constructed by simply taking the tensor product in $\mathfrak{Aff}_k$ in all levels: it is straightforward to check that this equips such a category with a monoidal structure.\\
Now define the normalisation of a $\mathrm{dg}_{\geq 0}$-affine space over $k$ to be the functor
\begin{eqnarray*}
\breve{\mathbf N}:&\mathfrak{sAff}_k&\xrightarrow{\hspace*{1cm}}\quad\mathfrak{Ch}_{\geq 0}\left(\mathfrak{Aff}_k\right) \\
&A_{\bullet}&\longmapsto\qquad\left(A_0,\mathbf N\left(\mathbf L\left(A_{\bullet}\right)\right)\right) \\
&\vcenter{\xymatrix{A_{\bullet}\ar[d]^{\phi} \\
B_{\bullet}}}&\mapsto
\left(\vcenter{\xymatrix{A_0\ar[d]^{\phi_0} \\
B_0}}\;\;,\;\;\vcenter{\xymatrix{\mathbf N\left(\mathbf L\left(A_{\bullet}\right)\right)\ar[d]^{\mathbf N\left(\mathbf L\left(\phi\right)\right)} \\ \mathbf N\left(\mathbf L\left(B_{\bullet}\right)\right)}}\right)
\end{eqnarray*}
and observe that such a definition is well-given as the $0$-th term of the chain complex $\mathbf N\left(\mathbf L\left(A_{\bullet}\right)\right)$ is precisely the difference vector space of $A_0$; in other words, the normalisation of a simplicial vector space does not affect the object in degree $0$, as follows from formula \eqref{norm}. \\
Analogously, define the denormalisation of a simplicial affine space over $k$ to be the functor
\begin{eqnarray*}
\breve{\mathbf K}:&\mathfrak{Ch}_{\geq 0}\left(\mathfrak{Aff}_k\right)&\xrightarrow{\hspace*{3.5cm}}\mathfrak{sAff}_k \\
&\left(A_0,V\right)&\longmapsto\xymatrix{A_0\ar[r] & A_0\times V_1\ar@<2.5pt>[l]\ar@<-2.5pt>[l]\ar@<2.5pt>[r]\ar@<-2.5pt>[r] & A_0\times V_1^{\oplus 2}\times V_2 \ar[l]\ar@<5pt>[l]\ar@<-5pt>[l]\ar[r]\ar@<5pt>[r]\ar@<-5pt>[r] & \cdots\ar@<-7.5pt>[l]\ar@<-2.5pt>[l]\ar@<2.5pt>[l]\ar@<7.5pt>[l]}
\end{eqnarray*}
where the maps involving $A_0$ and $A_0\times V_1$ are
\begin{eqnarray*}
&A_0\times V_1\ni\left(a,v\right)\mapsto a+d\left(v\right)\in A_0& \\
&A_0\times V_1\ni\left(a,v\right)\mapsto a\in A_0& \\
&A_0\ni a\mapsto\left(a,0\right)\in A_0\times V_1&
\end{eqnarray*}
and all other faces and degeneracies -- which do not involve the affine space $A_0$, but rather only the vector spaces $V_i$ -- are defined as done for classical denormalisation (see Remark \ref{denorm} and subsequent discussion). \\
In a similar way, given
\begin{equation*}
\phi\in\mathrm{Hom}_{\mathfrak{Ch}_{\geq 0}\left(\mathfrak{Aff}_k\right)}\left(\left(A_0,V\right),\left(B_0,W\right)\right)\quad\text{where}\quad\phi\left(\underline v\right):=f\left(\underline v\right)+b
\end{equation*}
the morphism $\breve{\mathbf K}\left(\phi\right)$ of simplicial affine spaces is defined in level $n$ by the affine map
\begin{eqnarray*}
&A_0\times V_1^{\oplus n}\times V_2^{\oplus\binom{n}{2}}\times\cdots\times V_n^{\oplus\binom{n}{n}}&\longrightarrow \qquad\;\; B_0\times W_1^{\oplus n}\times W_2^{\oplus\binom{n}{2}}\times\cdots\times W_n^{\oplus\binom{n}{n}} \\
&\left(a_0,\left(v_1^i\right)_i,\left(v_2^j\right)_j,\ldots,v_n\right)&\mapsto\left(f_0\left(a_0\right)+b,\left(f_1\left(v_1^i\right)\right)_i,\left(f_2\left(v_2^j\right)\right)_j,\ldots,f_n\left(v_n\right)\right)
\end{eqnarray*}
We are ready to describe the generalisation of Theorem \ref{Dold-Kan} we mentioned before.
\begin{prop} \label{ext Dold-Kan}
The functors $\breve{\mathbf N}$ and $\breve{\mathbf K}$ form an equivalence of categories between $\mathfrak{sAff}_k$ and $\mathfrak{Ch}_{\geq 0}\left(\mathfrak{Aff}_k\right)$.
\end{prop}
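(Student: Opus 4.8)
The plan is to bootstrap the classical Dold--Kan correspondence (Theorem \ref{Dold-Kan}) by showing that the only genuinely new ingredient here, namely the affine (translation) part of the structure maps and of morphisms, carries no information beyond what is already recorded by the linearisation $\mathbf L$ together with the degree-$0$ affine space. First I would check that $\breve{\mathbf N}$ and $\breve{\mathbf K}$ are functors: given the explicit face/degeneracy formulae above this is routine, reducing in positive levels to the functoriality of the ordinary $\mathbf N$, $\mathbf K$ and, in the levels involving $A_0$, to a direct verification of the simplicial identities. Then I would prove that $\breve{\mathbf N}$ is an equivalence by establishing essential surjectivity and full faithfulness, after which $\breve{\mathbf K}$ is forced to be a quasi-inverse.

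The conceptual heart is the observation that in a simplicial affine space $A_\bullet$ the unique arrow $[n]\to[0]$ of $\Delta$ yields a canonical affine map $\iota^A_n\colon A_0\to A_n$, and that these are compatible with every structure map, i.e. $A_\alpha\circ\iota^A_n=\iota^A_m$ for $\alpha\colon[m]\to[n]$, since any triangle $[m]\to[n]\to[0]$ in $\Delta$ commutes. Writing $A_\alpha=\ell_\alpha+t_\alpha$ with $\ell_\alpha$ linear and $t_\alpha$ the translation part, this identity forces $t_\alpha=\iota^A_m(0)-\ell_\alpha(\iota^A_n(0))$; similarly, naturality of a morphism $g\colon A_\bullet\to B_\bullet$ with respect to $[n]\to[0]$ forces the translation $c_n$ of $g_n$ to equal $\iota^B_n(c_0)-\ell_{g_n}(\iota^A_n(0))$, where $c_0$ is the translation of $g_0$. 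In other words all affine data in positive levels is determined by the linear data together with the degree-$0$ affine space, and dually one sees directly from the formulae that $\breve{\mathbf K}(A_0,V)$ is always a simplicial affine space whose structure maps are linear, with linearisation the ordinary $\mathbf K(V)$ and degree-$0$ part $A_0$.

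With this in hand the remaining points are quick. For essential surjectivity: $\breve{\mathbf N}\breve{\mathbf K}(A_0,V)\cong(A_0,\mathbf N\mathbf K(V))\cong(A_0,V)$ by Theorem \ref{Dold-Kan}. For full faithfulness: the linear part of a morphism $g\colon A_\bullet\to B_\bullet$ is exactly a morphism $\mathbf L(g)\colon\mathbf L A_\bullet\to\mathbf L B_\bullet$ of simplicial vector spaces, recovered bijectively from $\mathbf N\mathbf L(g)$ by Theorem \ref{Dold-Kan}, while the translations of the $g_n$ are recovered from $g_0$ by the identity of the previous paragraph; so $\breve{\mathbf N}$ is injective on $\mathrm{Hom}$-sets, and given $(h_0,\psi)\in\mathrm{Hom}(\breve{\mathbf N}A_\bullet,\breve{\mathbf N}B_\bullet)$ one reconstructs $g$ by putting $\mathbf L(g):=\mathbf K(\psi)$ and $c_n:=\iota^B_n(c_0)-\ell_{g_n}(\iota^A_n(0))$, the only thing left to check being that this $g$ is simplicial, which falls out of the two displayed identities for the $t_\alpha$ and the $c_n$. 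Hence $\breve{\mathbf N}$ is an equivalence; since $\breve{\mathbf N}\breve{\mathbf K}\cong\mathrm{id}$ and $\breve{\mathbf N}$ is fully faithful (hence reflects isomorphisms), $\breve{\mathbf K}$ is a quasi-inverse and $\breve{\mathbf K}\breve{\mathbf N}\cong\mathrm{id}$ as well.

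I expect the book-keeping in the full-faithfulness step, precisely the verification that the reconstructed $g$ commutes with all faces and degeneracies, to be the part requiring most care; it is elementary but one must keep the affine/linear decomposition straight throughout. A more structural route that avoids the explicit simplicial operators altogether is to use the faithful homogenisation functor $\mathfrak{Aff}_k\to\mathfrak{Vect}_k$, $W\mapsto W\oplus k$, together with its canonical projection to $k$: this identifies $\mathfrak{sAff}_k$ and $\mathfrak{Ch}_{\geq0}(\mathfrak{Aff}_k)$ with the full subcategories of $\mathfrak{sVect}_k/\underline k$ and of $\mathfrak{Ch}_{\geq0}(\mathfrak{Vect}_k)/k[0]$ cut out by surjectivity of the structure map (levelwise, respectively in degree $0$), after which Proposition \ref{ext Dold-Kan} follows by restricting the slice equivalence induced by Theorem \ref{Dold-Kan} along $\mathbf N(\underline k)=k[0]$ and noting that $\mathbf N$ and $\mathbf K$ preserve those two classes of objects.
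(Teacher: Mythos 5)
Your argument is correct, but it proceeds differently from the paper. The paper's proof simply asserts that Weibel's combinatorial proof of the classical correspondence (Theorem \ref{Dold-Kan}) carries over verbatim to the affine setting, i.e.\ it re-runs the $\mathbf N$/$\mathbf K$ combinatorics with affine decorations; you instead use the classical theorem as a black box and isolate the affine data, showing it is rigid: the maps induced by the terminal object $\left[0\right]$ of $\Delta$ (the total degeneracies $\iota_n$) force every translation part of a structure map, and of a morphism, to be the explicit expression you display in terms of the linear parts and the degree-$0$ translation, after which faithfulness, fullness and $\breve{\mathbf N}\breve{\mathbf K}\cong\mathrm{id}$ reduce to Theorem \ref{Dold-Kan}. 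Both computations you leave as bookkeeping do check out (the cocycle identity $t_{\alpha\beta}=\ell_\beta\left(t_\alpha\right)+t_\beta$ is exactly what your two displayed formulae encode), so the reconstruction of $g$ is simplicial as claimed. What each route buys: the paper's route avoids any new idea but tacitly requires re-verifying all the Dold--Kan combinatorics in $\mathfrak{Aff}_k$ (which it does not actually write out), whereas yours quotes the classical statement once and makes transparent why the affine enrichment is harmless; your slice-category variant ($\mathfrak{sAff}_k$ and $\mathfrak{Ch}_{\geq 0}\left(\mathfrak{Aff}_k\right)$ as the surjective parts of $\mathfrak{sVect}_k/\underline{k}$ and $\mathfrak{Ch}_{\geq 0}\left(\mathfrak{Vect}_k\right)/k\left[0\right]$) is even cleaner, though since the Proposition names the specific functors $\breve{\mathbf N}$ and $\breve{\mathbf K}$ you would still need the (easy) check that the restricted slice equivalence is naturally isomorphic to them. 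Two small imprecisions, neither harmful: your gloss that ``all affine data in positive levels is determined by the linear data together with the degree-$0$ affine space'' is literally true for morphisms between fixed objects but not for objects themselves (there the translations are determined by the linear parts together with the points $\iota_n\left(0\right)$, which are part of the affine structure); and ``$\mathbf L\left(g\right):=\mathbf K\left(\psi\right)$'' should be the unique simplicial map with normalisation $\psi$, i.e.\ $\mathbf K\left(\psi\right)$ conjugated by the natural isomorphism $\mathbf K\mathbf N\cong\mathrm{id}$.
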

\begin{proof}
The arguments used in \cite{We} Theorem 8.4.1 to prove the classical Dold-Kan correspondence given by Theorem \ref{Dold-Kan} carry over to this context. 
\end{proof}
As follows for instance from the discussion in \cite{SchShi} Section 2.3, the normalisation functor $\breve{\mathbf N}$ can be made into a lax monoidal functor via the Eilenberg-Zilber shuffle map, which is the natural transformation 
\begin{equation*}
\mathrm{EZ}:\breve{\mathbf N}\left(-\right)\otimes\breve{\mathbf N}\left(-\right)\longrightarrow\breve{\mathbf N}\left(-\otimes-\right)
\end{equation*}
determined for all $A_{\bullet},B_{\bullet}\in\mathfrak{sAff}_k$ by the morphisms
\begin{eqnarray*}
\mathrm{EZ}^{p,q}_{A_{\bullet},B_{\bullet}}:&\breve{\mathbf N}\left(A_{\bullet}\right)_p\otimes\breve{\mathbf N}\left(B_{\bullet}\right)_q&\xrightarrow{\hspace*{1.5cm}}\breve{\mathbf N}\left(A_{\bullet}\otimes B_{\bullet}\right)_{p+q} \\
&a\otimes b&\longmapsto\underset{\left(\mu,\nu\right)}{\sum}\mathrm{sign}\left(\mu,\nu\right)\,\sigma_{\nu}\left(a\right)\otimes\sigma_{\mu}\left(b\right)
\end{eqnarray*}
where the sum runs over all $(p,q)$-shuffles
\begin{equation*}
\left(\mu,\nu\right):=\left(\mu_1,\ldots,\mu_p,\nu_1,\ldots,\nu_q\right)
\end{equation*}
and the corresponding degeneracy maps are
\begin{equation*}
s_{\mu}:=s_{\mu_p}\circ\ldots s_{\mu_1}\qquad\qquad s_{\nu}:=s_{\nu_q}\circ\ldots s_{\nu_1}.
\end{equation*}
In the same fashion, again from \cite{SchShi} Section 2.3, the denormalisation functor $\tilde{\mathbf K}$ can also be made into a lax monoidal functor by means of the Alexander-Whitney map. The latter is defined to be the natural transformation
\begin{eqnarray*}
\mathrm{AW}:\tilde{\mathbf N}\left(-\otimes -\right)\longrightarrow\tilde{\mathbf N}\left(-\right)\otimes\tilde{\mathbf N}\left(-\right)
\end{eqnarray*}
given for all $A_{\bullet},B_{\bullet}\in\mathfrak{sAff}_k$ by the morphisms
\begin{eqnarray} \label{AW K}
\mathrm{AW}_{A_{\bullet},B_{\bullet}}^{n}:&\tilde{\mathbf N}\left(A_{\bullet}\otimes B_{\bullet}\right)_{n}&\xrightarrow{\hspace*{1.5cm}}\left(\tilde{\mathbf N}\left(A_{\bullet}\right)\otimes\tilde{\mathbf N}\left(B_{\bullet}\right)\right)_n \nonumber \\
&a\otimes b&\longmapsto\underset{p+q=n}{\bigoplus}\left(\tilde d^p\left(a\right)\otimes d_0^q\left(b\right)\right)
\end{eqnarray}
where the {} ``front face'' $\tilde d^p$ and the {} ``back face'' $d_0^q$ are induced respectively by the injective monotone maps $\tilde{\delta^p}:\left[p\right]\rightarrow\left[p+q\right]$ and $\delta_0^p:\left[q\right]\rightarrow\left[p+q\right]$; in particular the Alexander-Whitney map makes the normalisation functor $\tilde{\mathbf N}$ into a comonoidal one (again, see \cite{SchShi} Section 2.3, whose considerations adapt to these context). Notice also that by setting $A':=\tilde{\mathbf N}\left(A\right)$ and $B':=\tilde{\mathbf N}\left(A\right)$ in formula \eqref{AW K} and using the equivalence provided by Proposition \ref{ext Dold-Kan}, we get a version of the Alexander-Whitney transformation
\begin{equation*}
\mathrm{AW}:\tilde{\mathbf K}\left(-\right)\otimes\tilde{\mathbf K}\left(-\right)\longrightarrow\tilde{\mathbf K}\left(-\otimes-\right)\footnote{There is some abuse of notation in this formula}
\end{equation*}
which makes the denormalisation $\tilde{\mathbf K}$ into a lax monoidal functor. Also we have that the composite $\mathrm{AW}\circ\mathrm{EZ}$ is the same as the identity, while the transformation $\mathrm{EZ}\circ\mathrm{AW}$ is chain homotopic to the identity: in particular the Dold-Kan equivalence provided by Proposition \ref{ext Dold-Kan} is lax monoidal. \\
Now we are ready to introduce the notions of affine $\mathrm{dg}_{\geq 0}$-category and affine simplicial category, which will be crucial technical tools to develop a good derived version of the period map.
\begin{defn}
An \emph{affine differential graded category} over $k$ $\left(\mathfrak C,\underline{\mathfrak C}_{\bullet}\right)$ (\emph{affine dg$_{\geq 0}$-category} for short) is a category $\mathfrak C$ enriched over $\mathfrak{Ch}_{\geq 0}\left(\mathfrak{Aff}_k\right)$.
\end{defn}
We will denote by $\mathfrak{dg}_{\geq 0}\mathfrak{Cat}_k^{\mathfrak{Aff}}$ the $\infty$-category of affine $\mathrm{dg}_{\geq 0}$-categories.\\
Let $\left(\mathfrak C,\underline{\mathfrak C}_{\bullet}\right)$ be an affine $\mathrm{dg}_{\geq0}$-category and denote by $H_0\left(\left(\mathfrak C,\underline{\mathfrak C}_{\bullet}\right)\right)$ the (honest) category defined by the relations
\begin{eqnarray*}
\mathrm{Ob}\left(H_0\left(\left(\mathfrak C,\underline{\mathfrak C}_{\bullet}\right)\right)\right)&:=&\mathfrak C \\
\forall X,Y\in\mathfrak C\qquad\mathrm{Hom}_{H_0\left(\left(\mathfrak C,\underline{\mathfrak C}_{\bullet}\right)\right)}\left(X,Y\right)&:=&H_0\left(\underline{\mathfrak C}_{\bullet}\left(X,Y\right)\right).
\end{eqnarray*}
\begin{defn}
An \emph{affine differential graded groupoid} over $k$ (\emph{affine dg$_{\geq 0}$-groupoid} for short) will be an affine $\mathrm{dg}_{\geq 0}$-category $\left(\mathfrak C,\underline{\mathfrak C}_{\bullet}\right)$ such that the category $H_0\left(\left(\mathfrak C,\underline{\mathfrak C}_{\bullet}\right)\right)$ is a groupoid.
\end{defn}
We will denote by $\mathfrak{dg}_{\geq 0}\mathfrak{Grpd}_k^{\mathfrak{Aff}}$ the $\infty$-category of affine $\mathrm{dg}_{\geq 0}$-groupoids.
\begin{rem} \label{obsss}
The notion of $\mathrm{dg}_{\geq 0}$-affine space allows us to define a notion of $\infty$-groupoid in the differential graded context: as a matter of fact a more naive notion of $\mathrm{dg}_{\geq 0}$-groupoid -- intended as a $\mathrm{dg}_{\geq 0}$-category where all morphisms in level $0$ are isomorphism -- would not really make sense as every $\mathrm{dg}_{\geq 0}$-category comes with a zero morphism, which is seldom an isomorphism. 
\end{rem}
\begin{defn}
An \emph{affine simplicial category} over $k$ $\left(\mathfrak C,\underline{\mathfrak C}_{\bullet}\right)$ is a category $\mathfrak C$ enriched over $\mathfrak{sAff}_k$
\end{defn}
We will denote by $\mathfrak{sCat}_k^{\mathfrak{Aff}}$ the $\infty$-category of affine simplicial categories. \\
Let $\left(\mathfrak C,\underline{\mathfrak C}_{\bullet}\right)$ be an affine simplicial category and denote by $\pi_0\left(\left(\mathfrak C,\underline{\mathfrak C}_{\bullet}\right)\right)$ the (honest) category defined by the relations
\begin{eqnarray*}
\mathrm{Ob}\left(\pi_0\left(\left(\mathfrak C,\underline{\mathfrak C}_{\bullet}\right)\right)\right)&:=&\mathfrak C \\
\forall X,Y\in\mathfrak C\qquad\mathrm{Hom}_{\pi_0\left(\left(\mathfrak C,\underline{\mathfrak C}_{\bullet}\right)\right)}\left(X,Y\right)&:=&\pi_0\left(\underline{\mathfrak C}_{\bullet}\left(X,Y\right)\right).
\end{eqnarray*} 
\begin{defn}
An \emph{affine simplicial groupoid} over $k$ will be an affine simplicial category $\left(\mathfrak C,\underline{\mathfrak C}_{\bullet}\right)$ such that the category $\pi_0\left(\left(\mathfrak C,\underline{\mathfrak C}_{\bullet}\right)\right)$ is a groupoid. 
\end{defn}
We will denote by $\mathfrak{sGrpd}_k^{\mathfrak{Aff}}$ the $\infty$-category of affine simplicial groupoids.
\begin{rem}
The notion of simplicial affine space allows us to define a notion of $\infty$-groupoid in the $k$-simplicial context, just like $\mathrm{dg}_{\geq 0}$-affine spaces give rise to a good notion of differential graded groupoid, as observed in Remark \ref{obsss}
\end{rem}
Of course any simplicial affine space has an underlying simplicial set, so an affine simplicial category over $k$ is in particular a simplicially enriched category: more formally, there is a natural forgetful functors from $\mathfrak{sCat}_k^{\mathfrak{Aff}}$ to $\mathfrak{sCat}$. \\
The slightly extended version of the Dold-Kan equivalence given by Proposition \ref{ext Dold-Kan} induces a pair of functors 
\begin{equation} \label{aff cat DK}
\breve{\mathbf N}:\mathfrak{sCat}_k^{\mathfrak{Aff}}\rightleftarrows\mathfrak{dg}_{\geq 0}\mathfrak{Cat}_k^{\mathfrak{Aff}}:\breve{\mathbf K}\footnote{There is some abuse of notation in this formula.}
\end{equation}
where
\begin{eqnarray*}
&\breve{\mathbf N}:\qquad\mathfrak{sCat}_k^{\mathfrak{Aff}}\xrightarrow{\hspace*{3cm}}\mathfrak{dg}_{\geq 0}\mathfrak{Cat}_k^{\mathfrak{Aff}}\qquad& \\
&\mathfrak C\xmapsto{\hspace*{4cm}}\mathfrak C& \\
\forall P,Q\in\mathfrak C&\quad\underline{\mathfrak C}_{\bullet}\left(P,Q\right)\xmapsto{\hspace*{2.5cm}}\breve{\mathbf N}\left(\underline{\mathfrak C}_{\bullet}\left(P,Q\right)\right)& \\
\forall P,Q,R\in\mathfrak C&\qquad\left(\vcenter{\xymatrix{\underline{\mathfrak C}_{\bullet}\left(P,Q\right)\otimes\underline{\mathfrak C}_{\bullet}\left(Q,R\right)\ar[d]^{\circ} \\ \underline{\mathfrak C}_{\bullet}\left(P,R\right)}}\right)\mapsto\left(\vcenter{\xymatrix{\breve{\mathbf N}\left(\underline{\mathfrak C}_{\bullet}\left(P,Q\right)\right)\otimes\breve{\mathbf N}\left(\underline{\mathfrak C}_{\bullet}\left(Q,R\right)\right)\ar[d]^{\mathrm{EZ}_{\underline{\mathfrak C}_{\bullet}\left(P,Q\right),\underline{\mathfrak C}_{\bullet}\left(Q,R\right)}} \\ \breve{\mathbf N}\left(\underline{\mathfrak C}_{\bullet}\left(P,Q\right)\otimes\underline{\mathfrak C}_{\bullet}\left(Q,R\right)\right)\ar[d]^{\breve{\mathbf N}\left(\circ\right)} \\ \breve{\mathbf N}\left(\underline{\mathfrak C}_{\bullet}\left(P,R\right)\right)}}\right)&
\end{eqnarray*}
and 
\begin{eqnarray*}
&\breve{\mathbf K}:\qquad\mathfrak{dg}_{\geq 0}\mathfrak{Cat}_k^{\mathfrak{Aff}}\xrightarrow{\hspace*{3cm}}\mathfrak{sCat}_k^{\mathfrak{Aff}}\qquad& \\
&\mathfrak C\xmapsto{\hspace*{4cm}}\mathfrak C& \\
\forall P,Q\in\mathfrak C&\quad\underline{\mathfrak C}_{\bullet}\left(P,Q\right)\xmapsto{\hspace*{2.5cm}}\breve{\mathbf K}\left(\underline{\mathfrak C}_{\bullet}\left(P,Q\right)\right)& \\
\forall P,Q,R\in\mathfrak C&\qquad\left(\vcenter{\xymatrix{\underline{\mathfrak C}_{\bullet}\left(P,Q\right)\otimes\underline{\mathfrak C}_{\bullet}\left(Q,R\right)\ar[d]^{\circ} \\ 
\underline{\mathfrak C}_{\bullet}\left(P,R\right)}}\right)\mapsto\left(\vcenter{\xymatrix{\breve{\mathbf K}\left(\underline{\mathfrak C}_{\bullet}\left(P,Q\right)\right)\otimes\breve{\mathbf K}\left(\underline{\mathfrak C}_{\bullet}\left(Q,R\right)\right)\ar[d]^{\mathrm{AW_{\underline{\mathfrak C}_{\bullet}\left(P,Q\right),\underline{\mathfrak C}_{\bullet}\left(Q,R\right)}}} \\ 
\breve{\mathbf K}\left(\underline{\mathfrak C}_{\bullet}\left(P,Q\right)\otimes\underline{\mathfrak C}_{\bullet}\left(Q,R\right)\right)\ar[d]^{\breve{\mathbf K}\left(\circ\right)} \\ 
\breve{\mathbf K}\left(\underline{\mathfrak C}_{\bullet}\left(P,R\right)\right)}}\right)&
\end{eqnarray*}
Notice also that the $\infty$-equivalence given by formula \eqref{aff cat DK} restricts to an $\infty$-equivalence
\begin{equation*}
\breve{\mathbf N}:\mathfrak{sGrpd}_k^{\mathfrak{Aff}}\rightleftarrows\mathfrak{dg}_{\geq 0}\mathfrak{Grpd}_k^{\mathfrak{Aff}}:\breve{\mathbf K}.
\end{equation*}
At last, let us recall that there is a natural functor
\begin{equation} \label{bar W}
\bar W:\mathfrak{sCat}\longrightarrow\mathfrak{sSet}
\end{equation}
given by the right adjoint to Illusie's Dec functor; we are not describing it explicitly as its construction is slightly technical and not really needed for the sake of this paper: the definition of $\bar W$ can be found in \cite{GJ} Section V.7 or \cite{Pr3} Section 1. Moreover in \cite{CR} Cegarra and Remedios proved that $\bar W$ is weakly equivalent to the diagonal of the simplicial nerve functor. Functor \eqref{bar W} is also known to induce a right Quillen equivalence
\begin{equation*}
\bar W:\mathfrak{sGrpd}\longrightarrow\mathfrak{sSet}
\end{equation*}
and -- as a corollary of the results in \cite{Pr3} Section 1 -- we also have that functor \eqref{bar W} restricts to an equivalence
\begin{equation*}
\bar W:\mathfrak{sGrpd}_k^{\mathfrak{Aff}}\longrightarrow\mathfrak{sSet}.
\end{equation*} 
In Section 3.4 we will apply the functor $\bar W$ to interesting affine simplicial groupoids in order to define rigorously the derived deformation functor $\mathrm{hoFlag}_{\mathbb R\Gamma\left(X,\Omega^*_{X/k}\right)}^{F^{\bullet}}$, while the functor $\mathbb R\mathrm{Def}_X$ will be constructed in Section 3.5 by using different techniques.
\subsection{Derived Deformations of Filtered Complexes}
In this section we will study in detail how to enhance the formal Grassmannian and the flag functor analysed in Section 2.2 and Section 2.4 to the world of derived deformation functors; in particular we will define the notions of derived formal total Grassmannian and formal homotopy flag variety and make comparisons with the corresponding derived stacks studied in \cite{dN2}. \\
Throughout this section fix $V$ to be a complex of $k$-vector spaces, $S$ a subcomplex of it and $\mathcal F^{\bullet}$ a filtration on $V$. Also recall from \cite{dN2} that for any $R$ in $\mathfrak{Alg}_k$ or even $\mathfrak{dgAlg}^{\leq 0}_k$ there is a model structure on $\mathfrak{FdgMod}_R$ modelled on the projective model structure over $\mathfrak{dgMod}_R$.
\begin{defn} \label{der def V}
A \emph{derived deformation of $V$ over $A\in\mathfrak{dgArt}_k^{\leq 0}$} is the datum of a cofibrant complex of $A$-modules $\mathscr V_A$ and a surjective quasi-isomorphism $\varphi:\mathscr V_A\otimes_Ak\rightarrow V$.
\end{defn}
\begin{defn} \label{der def W}
A \emph{derived deformation of $S$ inside $V$ over $A\in\mathfrak{dgArt}_k^{\leq 0}$} is the datum of a derived deformation $\left(\mathscr V_A,\varphi\right)$ of $V$ and a subcomplex $\mathscr S_A\subseteq\mathscr V_A$ such that
\begin{equation*}
\varphi\big|_{\mathscr S_A}:\mathscr S_A\otimes_Ak\longrightarrow S
\end{equation*}
is still a surjective quasi-isomorphism.
\end{defn}
\begin{defn} \label{der def filt cmplx}
A \emph{derived deformation of $\left(V,\mathcal F^{\bullet}\right)$ over $A\in\mathfrak{dgArt}_k^{\leq 0}$} is a pair $\left(\left(\mathscr V_A, \mathcal F^{\bullet}_A\right),\varphi\right)$, where $\left(\mathscr V_A,\mathcal F^{\bullet}_A\right)$ is a cofibrant filtered complex of $A$-modules and $\varphi:\left(\mathscr V_A, \mathcal F^{\bullet}_A\right)\rightarrow\left(V,\mathcal F^{\bullet}\right)$ is a surjective morphism such that the maps $\mathcal F_A^p\mathscr V_A\otimes_Ak\rightarrow \mathcal F^pV$ induced by $\varphi$ are quasi-isomorphisms for all $p$.
\end{defn}
\begin{rem} \label{fib-cof flag}
Notice that the key technical assumptions in Definition \ref{der def V}, Definition \ref{der def W} and Definition \ref{der def filt cmplx} are 
\begin{enumerate}
\item restricting to cofibrant objects while defining derived deformations of (filtered) derived modules;
\item requiring that the structure morphism $\varphi$ is surjective.
\end{enumerate}
Assumption (1) allows us to use tensor products instead of derived tensor products. Assumption (2) makes objects in te slice category $\nicefrac{\mathfrak{dgMod}_A}{V}$ (or $\nicefrac{\mathfrak{FdgMod}_A}{\left(V,\mathcal F^{\bullet}\right)}$) fibrant.  Actually in the main geometric example we will consider, that is derived deformations of (filtered) perfect complexes of modules over a scheme, the perfectness condition will imply cofibrancy wherever relevant.
\end{rem}
Derived deformation of complexes, subcomplexes and filtrations are governed by nice derived deformation functors, already hidden in the Derived Deformation Theory folklore: we now propose a rigorous way to define them  via the language of affine $\mathrm{dg}_{\geq 0}$-categories. \\
In the above notations, let
\begin{equation*}
\left(hDef_V\left(A\right),\underline{hDef_V}\left(A\right)_{\bullet}\right) 
\end{equation*}
be the affine $\mathrm{dg}_{\geq 0}$-category defined by the formulae
\begin{equation*}
hDef_V\left(A\right):=\left\{\text{derived }A\text{-deformations of }V\right\}
\end{equation*}
and for all $\left(\mathscr V_A,\varphi\right),\left(\mathscr W_A,\phi\right)\in hDef_V\left(A\right)$
\begin{eqnarray*}
&\underline{hDef_V}\left(A\right)_0:=\left\{\Psi\in\mathrm{Hom}^0\left(\mathscr V_A,\mathscr W_A\right)\text{ s.t. }\phi\circ\Psi=\varphi\right\}&  \\
&\underline{hDef_V}\left(A\right)_1:=\left\{\Psi\in\mathrm{Hom}^{-1}\left(\mathscr V_A,\mathscr W_A\right)\text{ s.t. }\phi\circ\Psi=0\right\}& \\
&\vdots& \\
&\underline{hDef_V}\left(A\right)_n:=\left\{\Psi\in\mathrm{Hom}^{-n}\left(\mathscr V_A,\mathscr W_A\right)\text{ s.t. }\phi\circ\Psi=0\right\}&  \\
&\vdots&
\end{eqnarray*}
with the differential induced by the standard differential on $\mathrm{Hom}$ complexes.
Similarly, let
\begin{equation*}
\left(hDef_{S,V}\left(A\right),\underline{hDef_{S,V}}\left(A\right)_{\bullet}\right) 
\end{equation*}
be the affine $\mathrm{dg}_{\geq 0}$-category defined by the formulae
\begin{equation*}
hDef_{S,V}\left(A\right):=\left\{\text{(derived) }A\text{-deformations of } S \text{ inside } V\right\}
\end{equation*}
and for all $\left(\left(\mathscr S_A, \mathscr V_A\right),\varphi\right),\left(\left(\mathscr T_A, \mathscr W_A\right),\phi\right)\in hDef_{S,V}\left(A\right)$
\begin{equation*}
\underline{hDef_{S,V}}\left(A\right)_0:=\left\{\Psi\in\mathrm{Hom}^0\left(\left(\mathscr S_A,\mathscr V_A\right),\left(\mathscr T_A,\mathscr W_A\right)\right)\text{ s.t. }\phi\circ\Psi=\varphi\right\}
\end{equation*}
\begin{eqnarray*}
&\underline{hDef_{S,V}}\left(A\right)_1:=\left\{\Psi\in\mathrm{Hom}^{-1}\left(\left(\mathscr S_A,\mathscr V_A\right),\left(\mathscr T_A, \mathscr W_A\right)\right)\text{ s.t. }\phi\circ\Psi=0\right\}& \\
&\vdots& \\
&\underline{hDef_{S,V}}\left(A\right)_n:=\left\{\Psi\in\mathrm{Hom}^{-n}\left(\left(\mathscr S_A,\mathscr V_A\right),\left(\mathscr T_A, \mathscr W_A\right)\right)\text{ s.t. }\phi\circ\Psi=0\right\}&  \\
&\vdots&
\end{eqnarray*}
with the differential induced by the standard differential on $\mathrm{Hom}$ complexes. \\
Finally let
\begin{equation*}
\left(hDef_V^{\mathcal F^{\bullet}}\left(A\right),\underline{hDef_V^{\mathcal F^{\bullet}}}\left(A\right)_{\bullet}\right) 
\end{equation*}
be the affine $\mathrm{dg}_{\geq 0}$-category defined by the formulae
\begin{equation*}
hDef_V^{\mathcal F^{\bullet}}\left(A\right):=\left\{\text{(derived) }A\text{-deformations of }\left(V,\mathcal F^{\bullet}\right)\right\}
\end{equation*}
and for all $\left(\left(\mathscr V_A, \mathcal F^{\bullet}_A\right)\varphi\right),\left(\left(\mathscr W_A,\mathcal G^{\bullet}_A\right),\phi\right)\in hDef_V^{\mathcal F^{\bullet}}\left(A\right)$
\begin{eqnarray*}
&\underline{hDef_V^{\mathcal F^{\bullet}}}\left(A\right)_0:=\left\{\Psi\in\mathrm{Hom}^0\left(\left(\mathscr V_A,\mathcal F^{\bullet}_A\right),\left(\mathscr W_A,\mathcal G^{\bullet}_A\right)\right)\text{ s.t. }\phi\circ\Psi=\varphi\right\}&  \\
&\underline{hDef_V^{\mathcal F^{\bullet}}}\left(A\right)_1:=\left\{\Psi\in\mathrm{Hom}^{-1}\left(\left(\mathscr V_A,\mathcal F^{\bullet}_A\right),\left(\mathscr W_A,\mathcal G^{\bullet}_A\right)\right)\text{ s.t. }\phi\circ\Psi=0\right\}& \\
&\vdots& \\
&\underline{hDef_V^{\mathcal F^{\bullet}}}\left(A\right)_n:=\left\{\Psi\in\mathrm{Hom}^{-n}\left(\left(\mathscr V_A,\mathcal F^{\bullet}_A\right),\left(\mathscr W_A,\mathcal G^{\bullet}_A\right)\right)\text{ s.t. }\phi\circ\Psi=0\right\}&  \\
&\vdots&
\end{eqnarray*}
with the differential induced by the standard differential on $\mathrm{Hom}$ complexes.
\begin{rem}
The $2$-out-of-$3$ property implies that morphisms in $\underline{hDef_V}\left(A\right)_0$, $\underline{hDef_{S,V}}\left(A\right)_0$ and $\underline{hDef_V^{\mathcal F^{\bullet}}}\left(A\right)_0$ are all weak equivalences; in particular 
\begin{itemize}
\item $H_0\left(\left(hDef_V\left(A\right),\underline{hDef_V}\left(A\right)_{\bullet}\right)\right)$
\item $H_0\left(\left(hDef_{S,V}\left(A\right),\underline{hDef_{S,V}}\left(A\right)_{\bullet}\right)\right)$  
\item $H_0\left(\left(hDef_V^{\mathcal F^{\bullet}}\left(A\right),\underline{hDef_V^{F^{\bullet}}}\left(A\right)_{\bullet}\right)\right)$
\end{itemize} 
are groupoids, so the affine $\mathrm{dg}_{\geq 0}$-categories 
\begin{itemize}
\item $\left(hDef_V\left(A\right),\underline{hDef_V}\left(A\right)_{\bullet}\right)$ 
\item $\left(hDef_{S,V}\left(A\right),\underline{hDef_{S,V}}\left(A\right)_{\bullet}\right)$
\item $\left(hDef_V^{\mathcal F^{\bullet}}\left(A\right),\underline{hDef_V^{\mathcal F^{\bullet}}}\left(A\right)_{\bullet}\right)$ 
\end{itemize}
are really affine $\mathrm{dg}_{\geq 0}$-groupoids.
\end{rem}
Now define the derived deformation functors 
\begin{eqnarray*}
\mathbb R\mathrm{Def}_V:&\mathfrak{dgArt}_k^{\leq 0}&\xrightarrow{\hspace*{3cm}}\mathfrak{sSet} \\
&A&\longmapsto\bar W\left(\breve{\mathbf K}\left(\left(hDef_V\left(A\right),\underline{hDef_V}\left(A\right)_{\bullet}\right)\right)\right).
\end{eqnarray*}
\begin{eqnarray*}
\mathbb R\mathrm{Def}_{S,V}:&\mathfrak{dgArt}_k^{\leq 0}&\xrightarrow{\hspace*{3cm}}\mathfrak{sSet} \\
&A&\longmapsto\bar W\left(\breve{\mathbf K}\left(\left(hDef_{S,V}\left(A\right),\underline{hDef_{S,V}}\left(A\right)_{\bullet}\right)\right)\right).
\end{eqnarray*}
\begin{eqnarray*}
\mathbb R\mathrm{Def}_V^{\mathcal F^{\bullet}}:&\mathfrak{dgArt}_k^{\leq 0}&\xrightarrow{\hspace*{3cm}}\mathfrak{sSet} \\
&A&\longmapsto\bar W\left(\breve{\mathbf K}\left(\left(hDef_V^{\mathcal F^{\bullet}}\left(A\right),\underline{hDef_V^{\mathcal F^{\bullet}}}\left(A\right)_{\bullet}\right)\right)\right).
\end{eqnarray*}
\begin{lemma} \label{Def_End}
In the above notations we have:
\begin{enumerate}
\item $\mathbb R\mathrm{Def}_V$ is weakly equivalent to the derived deformation functor $\mathbb R\mathrm{Def}_{\mathrm{End}^*\left(V\right)}$ associated to the dgla $\mathrm{End}^*\left(V\right)$;
\item $\mathbb R\mathrm{Def}_{S,V}$ is weakly equivalent to the derived deformation functor $\mathbb R\mathrm{Def}_{\mathrm{End}^S\left(V\right)}$ associated to the dgla $\mathrm{End}^S\left(V\right)$;
\item $\mathbb R\mathrm{Def}_V^{\mathcal F^{\bullet}}$ is weakly equivalent to the derived deformation functor $\mathbb R\mathrm{Def}_{\mathrm{End}^{\mathcal F^{\bullet}}\left(V\right)}$ associated to the dgla $\mathrm{End}^{\mathcal F^{\bullet}}\left(V\right)$;
\end{enumerate}
\end{lemma}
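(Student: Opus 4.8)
The plan is to prove \emph{(1)} carefully and then to note that \emph{(2)} and \emph{(3)} follow by the same argument \emph{verbatim}, with $\mathrm{End}^*(V)$ replaced throughout by the sub-dgla $\mathrm{End}^S(V)$, resp.\ $\mathrm{End}^{\mathcal F^{\bullet}}(V)$, and with the affine $\mathrm{dg}_{\geq 0}$-groupoid of all derived deformations replaced by the full affine $\mathrm{dg}_{\geq 0}$-subgroupoid spanned by those deformations whose perturbation of the differential preserves $S$ (resp.\ each $\mathcal F^pV$). The only additional observation there is that a chain morphism of such deformations respecting a subcomplex (resp.\ a filtration) is precisely an element of the corresponding sub-dgla, which is immediate from the definitions of $\mathrm{End}^S(V)$ and $\mathrm{End}^{\mathcal F^{\bullet}}(V)$; everything below then transfers without change.

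\textbf{Step 1 (rigidification).} Since $k$ is a field, every complex of $k$-vector spaces is cofibrant and admits minimal models, and a cofibrant dg-module over the local Artinian dga $A$ has graded-free underlying module over the underlying graded algebra of $A$. Using this, together with the hypothesis that $\varphi\colon\mathscr V_A\otimes_Ak\to V$ is a surjective quasi-isomorphism and the surjectivity of structure maps from Remark \ref{fib-cof flag}, one checks that every derived $A$-deformation of $V$ is equivalent, inside the affine $\mathrm{dg}_{\geq 0}$-groupoid $(hDef_V(A),\underline{hDef_V}(A)_{\bullet})$, to one whose underlying graded $A$-module is $V\otimes_kA$ and whose structure map is the projection modulo $\mathfrak m_A$. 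This rigidification is effected by $0$-cycles in the relevant $\mathrm{Hom}$-complexes, hence induces an equivalence of affine $\mathrm{dg}_{\geq 0}$-groupoids which is invisible after applying $\bar W\circ\breve{\mathbf K}$, so one may replace $(hDef_V(A),\underline{hDef_V}(A)_{\bullet})$ by the full affine $\mathrm{dg}_{\geq 0}$-subgroupoid on the rigidified objects.

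\textbf{Step 2 (identification with a Deligne-type dg-groupoid).} On this subgroupoid a derived deformation is nothing but a square-zero perturbation $d+x$ of the differential of $V\otimes_kA$ with $x\in\bigl(\mathrm{End}^*(V)\otimes_k\mathfrak m_A\bigr)^1$, the relation $(d+x)^2=0$ being exactly the Maurer--Cartan equation $d(x)+\frac{1}{2}[x,x]=0$ in $\mathrm{End}^*(V)\otimes_k\mathfrak m_A$. A degree-$0$ morphism $(V\otimes_kA,d+x)\to(V\otimes_kA,d+y)$ subject to $\phi\circ\Psi=\varphi$ has the form $\mathrm{Id}+a$ with $a\in\mathrm{End}^0(V)\otimes_k\mathfrak m_A$, and the chain-map condition on it is precisely the gauge relation between $x$ and $y$; while for $n\geq 1$ the condition $\phi\circ\Psi=0$ forces $\Psi\in\mathrm{End}^{-n}(V)\otimes_k\mathfrak m_A$, the differential on the $\mathrm{Hom}$-complexes being the one twisted by $x$ and $y$. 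Hence $(hDef_V(A),\underline{hDef_V}(A)_{\bullet})$ is, canonically in $A$, the dg-enriched (extended) Deligne groupoid of the dgla $\mathrm{End}^*(V)$: its objects are $\widetilde{\mathrm{MC}}_{\mathrm{End}^*(V)}(A)$, its degree-$0$ hom-affine-spaces are torsors under $\mathrm{End}^0(V)\otimes\mathfrak m_A$ carrying the twisted gauge action of $\widetilde{\mathrm{Gg}}_{\mathrm{End}^*(V)}(A)$, and its full hom-complexes are the non-negative chain truncations of the twisted $\mathrm{End}^*(V)\otimes_k\mathfrak m_A$. Over a field $\mathrm{End}^*(V)$ already represents $\mathbb R\mathrm{End}(V)$, so no further cofibrant replacement is needed.

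\textbf{Step 3 (comparison with the Hinich nerve), and the main obstacle.} It remains to show that $\bar W\circ\breve{\mathbf K}$ applied to the dg-enriched Deligne groupoid of a dgla $\mathfrak g$ yields a derived deformation functor weakly equivalent to $\mathbb R\mathrm{Def}_{\mathfrak g}$; this is the affine, dg-enriched refinement of Theorem \ref{Del grpd} and is the mechanism behind Pridham's construction of $\mathcal{DS}ch_{d/k}$ and of the derived moduli of perfect complexes in \cite{Pr3} (compare also \cite{Pr4}, \cite{Pr1}). Concretely one checks that $A\mapsto\bar W\bigl(\breve{\mathbf K}(hDef_V(A),\underline{hDef_V}(A)_{\bullet})\bigr)$ is homotopic --- the cofibrancy and surjectivity hypotheses of Remark \ref{fib-cof flag} ensuring that $-\otimes_Ak$ already computes the derived tensor product and that the ambient slice categories are fibrant, so that quasi-isomorphisms in $\mathfrak{dgArt}_k^{\leq 0}$ go to weak equivalences --- and homotopy-homogeneous --- deformations, morphisms and all higher homotopies over a fibre product $A\times_BC$ being glued from those over $A$ and over $C$ along $B$, cofibrancy again identifying strict and homotopy fibre products. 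Its generalised tangent spaces, computed by evaluating the tangent functor $\mathrm{tan}$ on shifts of $k$ and unwinding the Dold--Kan equivalence of Proposition \ref{ext Dold-Kan} together with the Eilenberg--Zilber/Alexander--Whitney compatibilities, are the shifted cohomology of $\mathrm{End}^*(V)$, matching $H^*(\mathbb R\mathrm{Def}_{\mathrm{End}^*(V)})$ by Remark \ref{Del grpd coho}; combined with the canonical comparison to a model of the simplicial Maurer--Cartan functor and with Theorem \ref{compare def theories}, this gives the weak equivalence $\mathbb R\mathrm{Def}_V\simeq\mathbb R\mathrm{Def}_{\mathrm{End}^*(V)}$, and likewise for \emph{(2)} and \emph{(3)}. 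Steps 1 and 2 are essentially bookkeeping that recognises our affine $\mathrm{dg}_{\geq 0}$-groupoid as a Deligne-type groupoid; the real work is Step 3, where one must either import Pridham's comparison between the $\bar W\circ\breve{\mathbf K}$-presentation and the Hinich/Manetti presentations of derived deformation theory, or reprove enough of it --- the delicate points being homotopy-homogeneity (gluing deformations \emph{and all higher homotopies} along a surjection) and the combinatorics turning the denormalised, $\bar W$-ed twisted endomorphism complex into the correct higher homotopy groups. A slicker but less self-contained alternative is to identify $A\mapsto\bar W(\breve{\mathbf K}(hDef_V(A),\underline{hDef_V}(A)_{\bullet}))$ with the formal neighbourhood at $[V]$ of the derived stack of perfect complexes and to invoke that this neighbourhood is governed by $\mathbb R\mathrm{End}(V)\simeq\mathrm{End}^*(V)$; that merely relocates the difficulty to matching the two descriptions of that formal neighbourhood.
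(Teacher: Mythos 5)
Your proposal is correct and follows essentially the same route as the paper: the paper's proof simply writes down the map $\nu\colon\mathrm{BDel}_{\mathrm{End}^*\left(V\right)}\rightarrow\mathbb R\mathrm{Def}_V$ sending a Maurer--Cartan element $\sigma$ to the twisted complex $\left(V\otimes A,d+\sigma\right)$ and a gauge $\xi$ to $e^{\xi}$ --- precisely the identification your Steps 1--2 establish by rigidification --- and then, just as your Step 3 does, defers the fact that this is a weak equivalence to the known comparisons in the literature (Pridham, Manetti, Fiorenza--Martinengo), treating claims (2) and (3) as verbatim analogues.
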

\begin{proof}
This result is well-known in the Derived Deformation Theory folklore: we just recall the morphism 
\begin{equation*}
\mathbb R\mathrm{Def}_{\mathrm{End}^*}\left(V\right)\longrightarrow\mathbb R\mathrm{Def}_V
\end{equation*}
giving Claim (1). \\
Note that by Theorem \ref{Del grpd} it suffices to determine such a map on the (derived) Deligne groupoid $\mathrm{BDel}_{\mathrm{End}^*\left(V\right)}$ associated to the dgla $\mathrm{End}^*\left(V\right)$, so define
\begin{eqnarray*} \label{nu}
&\qquad\qquad\qquad\qquad\nu:\mathrm{BDel}_{\mathrm{End}^*\left(V\right)}\xrightarrow{\hspace*{1cm}}\mathbb R\mathrm{Def}_V&  \\
&\text{for all }A\in\mathfrak{dgArt}^{\leq 0}_k\quad\widetilde{\mathrm{MC}}_{\mathrm{End}^*\left(V\right)}\left(A\right)\ni \sigma\longmapsto\left[\left(V\otimes A,d+\sigma\right)\right]\quad\qquad&  \\
&\qquad\qquad\qquad\widetilde{\mathrm{Gg}}_{\mathrm{End}^*\left(V\right)}\left(A\right)\ni\xi\longmapsto\left[\vcenter{\xymatrix{\left(V\otimes  A,d+\sigma_1\right)\ar[d]_{e^{\xi}} \\ \left(V\otimes A,d+\sigma_2\right)}}\right] &
\end{eqnarray*}
Map $\nu$ is known to be a weak equivalence: a very rigorous but quite abstract proof can be found in \cite{Pr0} Section 4.1, while a simpler one can be found in \cite{Man2}; see also \cite{FMar} Section 6. Claim (2) and Claim (3) are proved in an entirely analogous way.
\end{proof}
Now we are ready to define coherent derived version of formal Grassmannians and flag functors.
\begin{defn} \label{holim def cmplx grass}
Define the \emph{derived formal total Grassmannian associated to $S$ inside $V$} to be the functor
\begin{equation*}
\mathrm{hoGrass}_{S,V}=\underset{\longleftarrow}{\mathrm{holim}}\left(\mathbb R\mathrm{Def}_{S,V}\doublerightarrow{\text{forgetful map}}{0}\mathbb R\mathrm{Def}_V\right).
\end{equation*} 
In particular $\mathrm{hoGrass}_{S,V}$ is a well-defined derived deformation functor.
\end{defn}
\begin{defn} \label{holim def cmplx}
Define the \emph{formal homotopy flag variety associated to $\left(V,\mathcal F^{\bullet}\right)$} to be the functor
\begin{equation*}
\mathrm{hoFlag}_V^{\mathcal F^{\bullet}}=\underset{\longleftarrow}{\mathrm{holim}}\left(\mathbb R\mathrm{Def}_V^{\mathcal F^{\bullet}}\doublerightarrow{\text{forgetful map}}{0}\mathbb R\mathrm{Def}_V\right).
\end{equation*} 
In particular $\mathrm{hoFlag}_V^{\mathcal F^{\bullet}}$ is a well-defined derived deformation functor.
\end{defn}
\begin{prop} \label{hoFlag dgla}
In the above notation we have that:
\begin{enumerate}
\item The functors $\mathrm{hoGrass}_{S,V}$ and $\mathbb R\mathrm{Def}_{C_{S,V}}$ are weakly equivalent.\footnote{Recall from formula \eqref{tgt dgrass} that $$C_{S,V}:=\underset{\longleftarrow}{\mathrm{holim}}\left(\mathrm{End}^{S}\left(V\right)\doublerightarrow{\chi}{0}\mathrm{End}^*\left(V\right)\right).
$$}
\item The functors $\mathrm{hoFlag}_V^{\mathcal F^{\bullet}}$ and $\mathbb R\mathrm{Def}_{C_V^{\mathcal F^{\bullet}}}$ are weakly equivalent;\footnote{Recall from formula \eqref{tgt hoflag} that $$C^{\mathcal F^{\bullet}}_V:=\underset{\longleftarrow}{\mathrm{holim}}\left(\mathrm{End}^{\mathcal F^{\bullet}}\left(V\right)\doublerightarrow{\chi}{0}\mathrm{End}^*\left(V\right)\right).
$$}
\end{enumerate}
\end{prop}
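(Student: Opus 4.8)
The plan is to deduce both claims from Lemma~\ref{Def_End} together with Theorem~\ref{compare def theories}, which says that $\mathbb R\mathrm{Def}\colon\mathfrak{dgLie}_k\to\mathfrak{Def}^{\mathrm{Hin}}_k$ is an equivalence of $(\infty,1)$-categories and therefore commutes with homotopy limits. I will carry out Claim~(2) in detail; Claim~(1) follows from the identical argument with $\mathrm{End}^{\mathcal F^{\bullet}}(V)$ replaced by $\mathrm{End}^{S}(V)$ and $\mathbb R\mathrm{Def}_V^{\mathcal F^{\bullet}}$ replaced by $\mathbb R\mathrm{Def}_{S,V}$ throughout.

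First I would observe that the cospan of differential graded Lie algebras defining $C_V^{\mathcal F^{\bullet}}$ is a diagram in $\mathfrak{dgLie}_k$, so applying $\mathbb R\mathrm{Def}$ and using that it preserves homotopy limits yields a weak equivalence
\[
\mathbb R\mathrm{Def}_{C_V^{\mathcal F^{\bullet}}}\;\simeq\;\underset{\longleftarrow}{\mathrm{holim}}\left(\mathbb R\mathrm{Def}_{\mathrm{End}^{\mathcal F^{\bullet}}(V)}\doublerightarrow{\chi_{*}}{0}\mathbb R\mathrm{Def}_{\mathrm{End}^{*}(V)}\right),
\]
where $\chi_{*}$ is the morphism of derived deformation functors induced by the dgla inclusion $\chi$ and the lower arrow is the image of the zero dgla morphism, i.e.\ the zero map. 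On the other hand, by Definition~\ref{holim def cmplx} the functor $\mathrm{hoFlag}_V^{\mathcal F^{\bullet}}$ is the homotopy limit of the cospan formed by $\mathbb R\mathrm{Def}_V^{\mathcal F^{\bullet}}$, $\mathbb R\mathrm{Def}_V$, the forgetful map and the zero map. Since Lemma~\ref{Def_End} supplies weak equivalences $\mathbb R\mathrm{Def}_V^{\mathcal F^{\bullet}}\simeq\mathbb R\mathrm{Def}_{\mathrm{End}^{\mathcal F^{\bullet}}(V)}$ and $\mathbb R\mathrm{Def}_V\simeq\mathbb R\mathrm{Def}_{\mathrm{End}^{*}(V)}$, it remains only to check that these identifications carry the forgetful map to $\chi_{*}$ (the zero map clearly goes to the zero map); granting this, the two cospans are equivalent, hence so are their homotopy limits, and Claim~(2) follows.

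The step I expect to be the main obstacle is precisely this last compatibility, which must be verified as an equality of morphisms in $\mathrm{Ho}(\mathfrak{Def}^{\mathrm{Hin}}_k)$ rather than on the nose. I would establish it using the explicit representatives behind Lemma~\ref{Def_End}: at the level of Deligne groupoids the map $\nu$ of its proof sends a Maurer--Cartan element $\sigma$ over $A$ to the deformed complex $(V\otimes A,\,d+\sigma)$ and a gauge $\xi$ to $e^{\xi}$, while the analogous map for $\mathrm{End}^{\mathcal F^{\bullet}}(V)$ sends $\sigma$ to the filtered complex $(V\otimes A,\,d+\sigma,\,\mathcal F^{\bullet}\otimes A)$ --- which makes sense precisely because a filtration-preserving $\sigma$ leaves $\mathcal F^{\bullet}\otimes A$ a subcomplex. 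Forgetting the filtration visibly intertwines the two copies of $\nu$, the forgetful map, and $\mathrm{BDel}(\chi)$, so the corresponding square of Deligne-groupoid nerves commutes strictly; by Theorem~\ref{Del grpd} it continues to commute after passing to $\underline{\mathrm{BDel}}$ and then to $\mathbb R\mathrm{Def}$, and everything in sight is natural in $A\in\mathfrak{dgArt}_k^{\leq 0}$. This produces a natural weak equivalence $\mathrm{hoFlag}_V^{\mathcal F^{\bullet}}\simeq\mathbb R\mathrm{Def}_{C_V^{\mathcal F^{\bullet}}}$, and running the same argument with $S$ in place of $\mathcal F^{\bullet}$ gives Claim~(1).
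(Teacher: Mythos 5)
Your proof is correct and follows essentially the same route as the paper's: commute $\mathbb R\mathrm{Def}$ with the homotopy limit defining $C_{S,V}$ (resp. $C_V^{\mathcal F^{\bullet}}$), identify the two vertices of the cospan via Lemma \ref{Def_End}, and compare with Definition \ref{holim def cmplx grass} (resp. Definition \ref{holim def cmplx}). The only differences are cosmetic: the paper spells out Claim (1) rather than Claim (2), justifies the preservation of homotopy limits by citing Getzler and Hinich rather than the equivalence of Theorem \ref{compare def theories}, and leaves implicit the compatibility of the forgetful map with the map induced by $\chi$, which you verify explicitly on Deligne groupoids.
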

\begin{proof}
We only prove Claim (1) as Claim (2) is proved in an entirely similar way.\\
We want to show that the functor $\mathrm{hoGrass}_{S,V}$ is weakly equivalent to the Hinich nerve of  
\begin{equation*}
C_{S,V}:=\underset{\longleftarrow}{\mathrm{holim}}\left(\mathrm{End}^S\left(V\right)\doublerightarrow{\mathrm{\text{incl.}}}{0}\mathrm{End}^*\left(V\right)\right).
\end{equation*}
As proved in \cite {Getz} and \cite{Hin1}, the functor $\mathbb R\mathrm{Def}$ (homotopically) commutes with homotopy limits, thus we have that
\begin{equation*} 
\mathbb R\mathrm{Def}_{\underset{\longleftarrow}{\mathrm{holim}}\left(\mathrm{End}^S\left(V\right)\doublerightarrow{\quad}{\quad}\mathrm{End}^*\left(V\right)\right)}
\end{equation*}
is weakly equivalent to 
\begin{equation*}
\underset{\longleftarrow}{\mathrm{holim}}\left(\mathbb R\mathrm{Def}_{\mathrm{End}^S\left(V\right)}\doublerightarrow{\quad}{\quad}\mathbb R\mathrm{Def}_{\mathrm{End}^*\left(V\right)}\right).
\end{equation*}
By Lemma \ref{Def_End} we have that $\mathbb R\mathrm{Def}_V$ is weakly equivalent to $\mathbb R\mathrm{Def}_{\mathrm{End}^*\left(V\right)}$, while the functor $\mathbb R\mathrm{Def}_{S,V}$ is weakly equivalent to $\mathbb R\mathrm{Def}_{\mathrm{End}^S\left(V\right)}$, thus the statement follows by applying Definition \ref{holim def cmplx grass}.
\end{proof}
As a consequence of Proposition \ref{hoFlag dgla} we have that functors $\mathrm{hoGrass}_{S,V}$ and $\mathrm{hoFlag}_V^{\mathcal F^{\bullet}}$ are really derived versions of $\mathrm{Grass}_{S,V}$ and $\mathrm{Flag}_V^{\mathcal F^{\bullet}}$: this is exactly the content of the next result.
\begin{prop} \label{Flag der enh}
In the above notations we have that:
\begin{enumerate}
\item the derived formal total Grassmannian $\mathrm{hoGrass}_{S,V}$ is a derived enhancement of the functor $\mathrm{Grass}_{S,V}$, i.e.
\begin{equation*}
\pi^0\pi_{\leq 0}\mathrm{hoGrass}_{S,V}\simeq\mathrm{Grass}_{S,V};
\end{equation*}
\item the formal homotopy flag variety $\mathrm{hoFlag}_V^{\mathcal F^{\bullet}}$ is a derived enhancement of the formal flag variety $\mathrm{Flag}_V^{\mathcal F^{\bullet}}$, i.e.
\begin{equation*}
\pi^0\pi_{\leq 0}\mathrm{hoFlag}_V^{\mathcal F^{\bullet}}\simeq\mathrm{Flag}_V^{\mathcal F^{\bullet}}.
\end{equation*}
\end{enumerate}
\end{prop}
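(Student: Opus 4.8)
The plan is to strip off the homotopy-theoretic wrapping and reduce to the classical Fiorenza--Manetti(--Martinengo) statements already recorded in Sections 2.2 and 2.4. I spell out Claim (1); Claim (2) is the same argument verbatim, with $C_V^{\mathcal F^{\bullet}}$ in place of $C_{S,V}$, the inclusion $\mathrm{End}^{\mathcal F^{\bullet}}(V)\hookrightarrow\mathrm{End}^*(V)$ in place of $\chi_{S,V}$, and Proposition \ref{gen und Flag dgla} in place of Theorem \ref{Grass=Def_C}.

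First I would note that $\pi^0\pi_{\leq 0}$ factors through the weak-homotopy type: since $\pi_0$ is a homotopy invariant of simplicial sets, $\pi^0\pi_{\leq 0}$ carries an objectwise weak equivalence of functors $\mathfrak{dgArt}_k^{\leq 0}\to\mathfrak{sSet}$ to an isomorphism of functors $\mathfrak{Art}_k\to\mathfrak{Set}$. Applying this to Proposition \ref{hoFlag dgla}(1) gives $\pi^0\pi_{\leq 0}\,\mathrm{hoGrass}_{S,V}\simeq\pi^0\pi_{\leq 0}\,\mathbb R\mathrm{Def}_{C_{S,V}}$, where $C_{S,V}$ is the homotopy cokernel of $\chi_{S,V}:\mathrm{End}^S(V)\hookrightarrow\mathrm{End}^*(V)$ entering Definition \ref{holim def cmplx grass}.

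Second, I would use the identity $\pi^0\pi_{\leq 0}\,\mathbb R\mathrm{Def}_{\mathfrak g}\simeq\mathrm{Def}_{\mathfrak g}$, valid for any differential graded Lie algebra $\mathfrak g$ --- this is exactly what the outer vertical arrows of the diagram in Theorem \ref{Fiorenza-Martinengo} express. Concretely, for $A\in\mathfrak{Art}_k$ \emph{classical} the Hinich nerve $\mathbb R\mathrm{Def}_{\mathfrak g}(A)=\mathbb R\mathrm{MC}_{\mathfrak g\otimes\Omega^*_{\mathrm{DR}}(\Delta^*)}(A)$ has vertex set $\mathrm{MC}_{\mathfrak g}(A)$ and its $\pi_0$ is the quotient of $\mathrm{MC}_{\mathfrak g}(A)$ by the homotopy (``path'') relation of Section 2.1; since $\mathfrak m_A$ sits in degree $0$, that relation coincides with gauge equivalence, so $\pi_0(\mathbb R\mathrm{Def}_{\mathfrak g}(A))=\mathrm{Def}_{\mathfrak g}(A)$, naturally in $A$ (see \cite{Man2} and \cite{Pr1}; compare the tangent-space computations of Section 3.1, and Remark \ref{Del grpd coho}). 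Hence $\pi^0\pi_{\leq 0}\,\mathbb R\mathrm{Def}_{C_{S,V}}\simeq\mathrm{Def}_{C_{S,V}}$.

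Third, I would identify $\mathrm{Def}_{C_{S,V}}$ with $\mathrm{Grass}_{S,V}$. The dgla $C_{S,V}$ --- a homotopy limit computed in $\mathfrak{dgLie}_k$ --- is $L_{\infty}$-quasi-isomorphic to the naive mapping cone $C_{\chi_{S,V}}$ carrying the Fiorenza--Manetti $L_{\infty}$-structure of Proposition \ref{mapping cone}; indeed that structure is produced precisely by transferring the dgla structure of the path-space model of the homotopy fibre onto the naive cone. By the Basic Theorem of Deformation Theory (\cite{Man2}) we get $\mathrm{Def}_{C_{S,V}}\simeq\mathrm{Def}_{C_{\chi_{S,V}}}$, and the right-hand side is $\mathrm{Grass}_{S,V}$ by Theorem \ref{Grass=Def_C}. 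Chaining the three steps yields $\pi^0\pi_{\leq 0}\,\mathrm{hoGrass}_{S,V}\simeq\mathrm{Grass}_{S,V}$, and the parallel chain gives Claim (2); that all the identifications are natural in $A$ is routine and is left to the reader. The one step demanding genuine care --- and the main obstacle --- is this last comparison: one must match the specific dgla $C_{S,V}$ arising as a homotopy limit with the $L_{\infty}$-algebra on which Fiorenza--Manetti's theorem is phrased, i.e.\ exhibit an honest $L_{\infty}$-quasi-isomorphism between them so that the Basic Theorem legitimately applies. Everything else is a formal concatenation of equivalences already available in the paper.
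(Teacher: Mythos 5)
Your proposal is correct and follows essentially the same route as the paper: the paper's own proof simply combines Proposition \ref{hoFlag dgla}.1 (identifying $\mathrm{hoGrass}_{S,V}$ with $\mathbb R\mathrm{Def}_{C_{S,V}}$), Theorem \ref{Grass=Def_C} (identifying $\mathrm{Grass}_{S,V}$ with $\mathrm{Def}_{C_{\chi_{S,V}}}$), and Theorem \ref{compare def theories} to conclude that the derived functor enhances the classical one. Your extra care about matching the homotopy-limit model $C_{S,V}$ with the Fiorenza--Manetti $L_\infty$-cone is a point the paper treats as definitional (the cone \emph{is} its model for the homotopy cokernel), so your argument is just a more explicit version of the same proof.
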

\begin{proof}
Again, we only prove Claim (1) as Claim (2) is proved in an entirely similar way.\\
Theorem \ref{Grass=Def_C} and Proposition \ref{hoFlag dgla}.1 state that the (homotopy) dgla $C_{S,V}$ represents both the deformation functor $\mathrm{Grass}_{S,V}$ and the derived deformation functor $\mathrm{hoGrass}_{S,V}$; Theorem \ref{compare def theories} implies that the latter has to be a derived enhancement of the former.
\end{proof}
At last recall from \cite{dN2} that there exist two (locally geometric) derived stacks $\mathcal{DG}rass_k\left(V\right)$ and $\mathcal{DF}lag_k\left(V\right)$ which respectively enhance the total Grassmannian variety $\mathrm{Grass}\left(H^*\left(V\right)\right)$ and the (total) flag variety $\mathrm{Flag}\left(H^*\left(V\right)\right)$ to the derived world (see \cite{dN2} Theorem 2.42 and \cite{dN2} Theorem 2.45). As a conclusion of this section we want to show that functors $\mathrm{hoGrass}_{S,V}$ and $\mathrm{hoFlag}_V^{\mathcal F^{\bullet}}$ are formal neighbourhoods of the above derived stacks.
\begin{prop} \label{comp flag fin}
In the above notations we have that:
\begin{enumerate}
\item the derived deformation functor $\mathrm{hoGrass}_{S,V}$ is the formal neighbourhood of the derived stack $\mathcal{DG}rass_k\left(V\right)$ at $\left[S\hookrightarrow V\right]$;
\item the derived deformation functor $\mathrm{hoFlag}_V^{\mathcal F^{\bullet}}$ is the formal neighbourhood of the derived stack $\mathcal{DF}lag_k\left(V\right)$ at $\left(V,\mathcal F^{\bullet}\right)$.
\end{enumerate}
\end{prop}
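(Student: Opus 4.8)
The plan is to prove Claim (1) in detail and then deduce Claim (2) by the same argument applied to towers of subcomplexes: since a filtration is a tower of inclusions, both $\mathcal{DF}lag_k\left(V\right)$ and $\mathrm{hoFlag}_V^{\mathcal F^{\bullet}}$ are obtained from the corresponding Grassmannian data by iterated homotopy fibre products (compare Definition \ref{holim def cmplx grass} with Definition \ref{holim def cmplx} and the proof of Proposition \ref{hoFlag dgla}), and both $\mathbb R\mathrm{Def}$ and the operation of taking formal neighbourhoods commute with homotopy limits; so Claim (2) will follow formally from Claim (1). Thus I concentrate on the derived total Grassmannian.

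First I would recall from \cite{dN2} the functor of points of $\mathcal{DG}rass_k\left(V\right)$: up to weak equivalence, for $R\in\mathfrak{dgAlg}_k^{\leq 0}$ it is $\bar W$ applied to the simplicial groupoid of cofibrant sub-$R$-complexes $\mathscr S_R\hookrightarrow V\otimes R$ satisfying the perfectness conditions recalled in Remark \ref{fib-cof flag}, with quasi-isomorphisms over $V\otimes R$ as $1$-morphisms. By definition the formal neighbourhood at the $k$-point $\left[S\hookrightarrow V\right]$ sends $A\in\mathfrak{dgArt}_k^{\leq 0}$ to the homotopy fibre $\mathcal{DG}rass_k\left(V\right)\left(A\right)\times^h_{\mathcal{DG}rass_k\left(V\right)\left(k\right)}\left\{\left[S\hookrightarrow V\right]\right\}$. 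The main step is to unwind this homotopy fibre and identify it with the affine $\mathrm{dg}_{\geq 0}$-groupoid $\left(hDef_{S,V}\left(A\right),\underline{hDef_{S,V}}\left(A\right)_{\bullet}\right)$ of Section 3.4 (after replacing $\mathscr S_R\hookrightarrow V\otimes R$ by a weakly equivalent inclusion into a cofibrant resolution of $V$ over $R$, so that base change along $A\twoheadrightarrow k$ is computed by $-\otimes_A k$): a point of the homotopy fibre over $\left[S\hookrightarrow V\right]$ is exactly a derived $A$-deformation of $S$ inside $V$ in the sense of Definition \ref{der def W}, the higher simplices recording the homotopy coherences, so that applying $\bar W\circ\breve{\mathbf K}$ recovers the homotopy fibre over the trivial deformation of the ambient complex, which is $\mathrm{hoGrass}_{S,V}=\underset{\longleftarrow}{\mathrm{holim}}\left(\mathbb R\mathrm{Def}_{S,V}\doublerightarrow{\text{forgetful map}}{0}\mathbb R\mathrm{Def}_V\right)$ of Definition \ref{holim def cmplx grass}; here the role of the $0$-map is precisely to rigidify the ambient $V$, matching the convention in \cite{dN2} that the ambient complex of $\mathcal{DG}rass_k\left(V\right)$ is held fixed.

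Having produced a natural comparison map, I would check that it is a weak equivalence by passing to controlling dglas. By Proposition \ref{hoFlag dgla}.1 one has $\mathrm{hoGrass}_{S,V}\simeq\mathbb R\mathrm{Def}_{C_{S,V}}$, and using that $\mathbb R\mathrm{Def}$ commutes with homotopy limits (as in the proof of Proposition \ref{hoFlag dgla}) together with Lemma \ref{Def_End} one sees that the formal neighbourhood is governed by the same dgla $C_{S,V}$; the folklore recalled in Section 3.1 guarantees that the formal neighbourhood of $\mathcal{DG}rass_k\left(V\right)$ is a genuine Hinich derived deformation functor, so Theorem \ref{compare def theories} promotes the resulting quasi-isomorphism of dglas to the desired weak equivalence. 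As an independent consistency check, Proposition \ref{Flag der enh} and \cite{dN2} show that $\pi^0\pi_{\leq 0}$ of both sides is the classical functor $\mathrm{Grass}_{S,V}$, so the comparison is already an equivalence after $0$-truncation.

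The hard part will be the bookkeeping in the two preceding paragraphs: reconciling the possibly different, though necessarily weakly equivalent, cofibrant/perfect models used to build $\mathcal{DG}rass_k\left(V\right)$ in \cite{dN2} with the affine $\mathrm{dg}_{\geq 0}$-groupoid of Section 3.4, and verifying that the homotopy fibre cutting out ``$\mathscr S_A$ reduces to $S\hookrightarrow V$ over $k$ with ambient held fixed'' corresponds exactly to the homotopy fibre over $0\in\mathbb R\mathrm{Def}_V$. A clean way to organise this is to note that $\mathcal{DG}rass_k\left(V\right)$ lies over the derived stack $\mathbb R\mathcal Perf_k$ of perfect complexes, with $\left[S\hookrightarrow V\right]$ mapping to the class of the ambient complex, that the formal neighbourhood of $\mathbb R\mathcal Perf_k$ at $\left[V\right]$ is $\mathbb R\mathrm{Def}_V$, and that formal neighbourhoods commute with this homotopy fibre product; the problem then reduces to the computation of the fibre, which is precisely the content of Definition \ref{holim def cmplx grass} and Proposition \ref{hoFlag dgla}.1. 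The same scheme, with $\mathcal{DF}lag_k\left(V\right)$ over $\mathbb R\mathcal Perf_k$ and $\mathrm{hoFlag}_V^{\mathcal F^{\bullet}}$ in place of $\mathrm{hoGrass}_{S,V}$, yields Claim (2).
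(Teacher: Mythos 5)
Your proposal follows essentially the same route as the paper: exploit the presentation of the derived Grassmannian as a homotopy fibre product over $\mathbb R\mathcal Perf_k$ (the paper passes through the big derived Grassmannian $\mathcal{DGRASS}_k\left(V\right)=\underset{\longleftarrow}{\mathrm{holim}}\left(\mathbb R\mathcal Sub_k\rightrightarrows\mathbb R\mathcal Perf_k\right)$, of which $\mathcal{DG}rass_k\left(V\right)$ is an open substack), use that taking formal neighbourhoods commutes with this homotopy limit so the statement reduces to identifying the formal neighbourhoods of $\mathbb R\mathcal Sub_k$ and $\mathbb R\mathcal Perf_k$ with $\mathbb R\mathrm{Def}_{S,V}$ and $\mathbb R\mathrm{Def}_V$, and then identify the homotopy fibre of the simplicial moduli construction of \cite{dN2} at $\left(S,V\right)$ with $\breve{\mathbf K}$ of the affine $\mathrm{dg}_{\geq 0}$-groupoid $\left(hDef_{S,V}\left(A\right),\underline{hDef_{S,V}}\left(A\right)_{\bullet}\right)$ before applying $\bar W$ --- which is precisely your ``main step''. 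The only presentational differences are that your dgla-level verification is redundant (the groupoid identification already yields the equivalence, and as phrased it presupposes the identification it is meant to check), and that the paper handles Claim (2) by rerunning the argument with filtered complexes rather than by your tower reduction.
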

\begin{proof}
Again, we only prove Claim (1) as Claim (2) is proved in an entirely similar way.\\
As a first step, recall that -- by \cite{dN2} Proposition 2.41 -- the stack $\mathcal{DG}rass_k\left(V\right)$ is an open derived substack of the \emph{big total derived Grassmannian}
\begin{equation*}
\mathcal{DGRASS}_k\left(V\right):=\underset{\longleftarrow}{\mathrm{holim}}\left(\mathbb R\mathcal Sub_k\doublerightarrow{\left[U\hookrightarrow W\right]\mapsto W}{\mathrm{const}_V}\mathbb R\mathcal Perf_k\right)
\end{equation*}
where
\begin{equation*}
\mathbb R\mathcal Perf_k=\underset{n}{\bigcup}\mathbb R\mathcal Perf_k^n\qquad\qquad\qquad\mathbb R\mathcal Sub_k=\underset{n}{\bigcup}\mathbb R\mathcal Sub_k^n
\end{equation*}
are respectively the locally geometric derived stack of perfect complexes and the locally geometric derived stack of perfect subcomplexes: for more details see \cite{dN2} Section 2.2 and 2.3. \\
As a result, it suffices to show that $\mathrm{hoGrass}_{S,V}$ is the formal neighbourhood of $\mathcal{DGRASS}_k\left(V\right)$ at $\left[S\hookrightarrow V\right]$, which by Definition \ref{holim def cmplx grass} amounts to prove that $\mathbb R\mathrm{Def}_{S,V}$ is the formal neighbourhood of $\mathbb R\mathcal Sub_k$ at $\left[S\hookrightarrow V\right]$ and $\mathbb R\mathrm{Def}_V$ is the formal neighbourhood of $\mathbb R\mathcal Perf_k$ at $V$; we show only the first assertion, as the other one is proved analogously. \\
Observe from \cite{dN2} Remark 2.34, \cite{dN2} Corollary 2.32 and \cite{dN2} Theorem 2.33 that the derived stack $\mathbb R\mathcal Sub^n_k$ is obtained as follows: given for all $A\in\mathfrak{Alg}_k$ the functorial simplicial category
\begin{eqnarray*}
&\mathbf M^n_{\mathrm{sub},k}\left(A\right):=&\text{full simplicial subcategory of $\mathfrak{FdgMod}_A$ made of pairs $(\mathscr E,\mathscr C)$} \\ 
&&\text{of perfect complexes of length at most $n$ and for which $E\subseteq C$}
\end{eqnarray*}
construct for all $A\in\mathfrak{dg}_b\mathfrak{Nil}^{\leq 0}_k$ the functorial simplicial category
\begin{eqnarray*}
&\tilde{\mathbf M}^n_{\mathrm{sub},k}\left(A\right):=&\text{full simplicial subcategory of $\mathcal W\left(Fd\mathrm{CART}_k\left(A\right)\right)$ \footnotemark} \\ &&\text{made of pairs $(\mathscr E,\mathscr C)$ such that $(\mathscr E\otimes^{\mathbb L}_AH^0\left(A\right),\mathscr C\otimes^{\mathbb L}_AH^0\left(A\right))$} \\
&&\text{is weakly equivalent to an object in $\mathbf M^n_{\mathrm{sub},k}\left(H^0\left(A\right)\right)$ }
\end{eqnarray*}
\footnotetext{For the precise meaning of $Fd\mathrm{CART}_k$ see \cite{dN2} formula (2.29).}and end up with a functor $\bar W\tilde{\mathbf M}^n_{\mathrm{sub},k}$ which turns out to be via Lurie-Pridham Representability (see \cite{Lu1} or \cite{Pr2bis} for more details) the restriction to $\mathfrak{dg}_b\mathfrak{Nil}^{\leq 0}_k$ of the derived geometric stack $\mathbb R\mathcal Sub^n_k$, which is fully determined then. 
On the other hand recall from the beginning of this section that the derived deformation functor $\mathbb R\mathrm{Def}_{S,V}$ is obtained from the formal affine $\mathrm{dg}_{\geq 0}$-groupoid
\begin{equation*}
\left(hDef_{S,V}\left(A\right),\underline{hDef_{S,V}}\left(A\right)_{\bullet}\right)\qquad\qquad A\in\mathfrak{dgArt}^{\leq 0}_k
\end{equation*}
by applying functor $\bar W$ and (affine) Dold-Kan denormalisation $\breve{\mathbf K}$. \\
Now fix $A\in\mathfrak{dgArt}^{\leq 0}_k$; clearly we have
\begin{equation} \label{neigh}
\tilde{\mathbf M}^n_{\mathrm{sub},k}\left(A\right)\times^{h}_{\tilde{\mathbf M}^n_{\mathrm{sub},k}\left(k\right)}\left\{\left(S,V\right)\right\}\simeq \breve{\mathbf K}\left(\left(hDef_{S,V}\left(A\right),\underline{hDef_{S,V}}\left(A\right)_{\bullet}\right)\right)\quad\quad A\in\mathfrak{dgArt}^{\leq 0}_k
\end{equation}
thus the equivalence of $\mathbb R\mathrm{Def}_{S,V}$ and the formal neighbourhood of $\mathbb R\mathcal Sub^n_k\subseteq\mathbb R\mathcal Sub_k$ at $\left(S,V\right)$ follows by applying functor $\bar W$ to formula \eqref{neigh}.
\end{proof}
\subsection{Derived Deformations of $k$-Schemes}
Now we want to describe the functor $\mathbb R\mathrm{Def}_X$ which parametrises derived deformations of the scheme $X$: the idea consists of deforming the scheme $X$ through derived schemes instead of ordinary schemes. There are a variety of equivalent definitions of derived scheme (in particular see \cite{Lu1} Definition 4.5.1 and \cite{TV} Chapter 2.2 for the two most standard ways to look at it); the one we are about to recall probably is not the most elegant, but it is definitely the handiest one to make actual computations. As a matter of fact, by \cite{Pr2} Theorem 6.42 a \emph{derived scheme} $S$ over $k$ can be seen as a pair $\left(\pi^0S,\mathscr O_{S,*}\right)$, where $\pi^0S$ is an honest $k$-scheme and $\mathscr O_{S,*}$ is a presheaf of differential graded commutative algebras in non-positive degrees on the site of affine opens of $\pi^0S$ such that:
\begin{itemize}
\item the (cohomology) presheaf $\mathcal H^0\left(\mathscr O_{S,*}\right)\simeq\mathscr O_{\pi^0S}$;
\item the (cohomology) presheaves $\mathcal H^n\left(\mathscr O_{S,*}\right)$ are quasi-coherent $\mathscr O_{\pi^0S}$-modules.
\end{itemize}
Also, recall from \cite{Pr2} that a morphism $f:A\rightarrow B$ in $\mathfrak{dgAlg}_k^{\leq 0}$ is said to be \emph{homotopy flat} if 
\begin{equation*}
H^0\left(f\right):H^0\left(A\right)\longrightarrow H^0\left(B\right)
\end{equation*}
is flat and the maps
\begin{equation*}
H^i\left(A\right)\otimes_{H^0\left(A\right)}H^0\left(B\right)\longrightarrow H^i\left(B\right)
\end{equation*}
are isomorphisms for all $i$; moreover a very useful characterisation says that $f$ is homotopy flat if and only if $B\otimes^{\mathbb L}_AH^0\left(A\right)$ is (weakly equivalent to) a discrete flat $H^0\left(A\right)$-algebra: for a proof see \cite{Pr3} Lemma 3.13.\footnote{\cite{Pr3} and \cite{Pr2} actually deal with homotopy flatness in terms of simplicial and $\mathrm{dg}_{\geq 0}$ chain algebras; nevertheless all definitions and arguments readily adapt to cochain algebras in non-positive degrees.}\\
Now define a derived deformation of the scheme $X$ over $A\in\mathfrak{dgArt}^{\leq 0}$ to be a homotopy pull-back diagram of derived schemes
\begin{equation*}
\xymatrix{\ar@{} |{\Box^h}[dr]X\ar@{^{(}->}[r]^i\ar[d] & \mathcal X\ar[d]^p \\
\mathrm{Spec}\left(k\right)\ar[r] & \mathbb R\mathrm{Spec}\left(A\right)}
\end{equation*}
where the map $p$ is homotopy flat; equivalently such a deformation can be seen as a morphism $\mathscr O_{A,*}\rightarrow\mathscr O_X$ of presheaves of differential graded commutative algebras over $A$ such that: 
\begin{enumerate}
\item $\mathscr O_{A,*}$ is homotopy flat;
\item the induced $k$-linear morphism $\mathscr O_{A,*}\otimes_A^{\mathbb L} k\rightarrow\mathscr O_X$ is a weak equivalence;
\item the morphism $\mathscr O_{A,*}\rightarrow\mathscr O_X$ is surjective;
\item $\mathscr O_{A,*}$ is cofibrant.
\end{enumerate}
\begin{rem} \label{fib-cof X}
In the above notations, Condition (1) and Condition (2) are proper deformation-theoretic conditions, which resemble the ones characterising underived deformations of schemes (see Section 2.1), while Condition (3) and Condition (4) are fibrancy-cofibrancy conditions, which are needed in order to ensure that certain maps of derived deformation functors which will arise in the rest of the paper are well-defined.
\end{rem}
Now consider the formal groupoid 
\begin{eqnarray*}
\mathrm{Del}_X:&\mathfrak{dgArt}_k^{\leq 0}&\xrightarrow{\hspace*{1cm}}\mathfrak{Grpd} \\
&A&\longmapsto\mathrm{Del}_X\left(A\right)
\end{eqnarray*}
defined by the formulae
\begin{equation*}
\mathrm{Del}_X\left(A\right):=\left\{\text{(derived) deformations of }X\text{ over }A\right\}
\end{equation*} 
and for all $\left(\mathscr O_{A,*}\overset{\varphi}{\rightarrow}\mathscr O_X\right),\left(\mathscr O'_{A,*}\overset{\phi}{\rightarrow}\mathscr O_X\right)\in\mathrm{Del}_X\left(A\right)$
\begin{equation} \label{Del_X}
\mathrm{Hom}_{\mathrm{Del}_X\left(A\right)}\left(\varphi,\phi\right):=\left\{\Psi\in\mathrm{Hom}_{A}^0\left(\mathscr O_{A,*},\mathscr O'_{A,*}\right)\text{ s.t. }\phi\circ\Psi=\varphi,\Psi\equiv\mathrm{Id}\;\left(\mathrm{mod}\,\mathfrak m_A\right)\right\}
\end{equation}
\begin{rem}
In the notations of formula \eqref{Del_X}, notice that the condition $\Psi\equiv\mathrm{Id}\;\left(\mathrm{mod}\,\mathfrak m_A\right)$ ensures that $\mathrm{Del}_X\left(A\right)$ is a groupoid for all $A\in\mathfrak{dgArt}_k^{\leq 0}$; roughly speaking, the formal groupoid $\mathrm{Del}_X$ can be thought as some sort of (derived) Deligne groupoid associated to the scheme $X$, meaning that its role is intended to formally resemble the one played by the (derived) Deligne groupoid associated do a differential graded Lie algebra, which we described in Section 3.1. 
\end{rem}
Now consider the functor
\begin{equation*}
\mathrm{BDel}_X:\mathfrak{dgArt}_k^{\leq 0}\xrightarrow{\hspace*{1cm}}\mathfrak{sSet}
\end{equation*}
given by the nerve of $\mathrm{Del}_X$ and define 
\begin{equation*}
\mathbb R\mathrm{Def}_X:\mathfrak{dgArt}_k^{\leq 0}\xrightarrow{\hspace*{1cm}}\mathfrak{sSet}
\end{equation*}
to be the right derived functor of $\mathrm{BDel}_X$.
The definition of $\mathrm{Del}_X$ implies immediately that this is a derived pre-deformation functor, thus -- by \cite{Pr2bis} Theorem 3.16 -- $\mathbb R\mathrm{Def}_X$ turns to be a derived deformation functor. \\
Now let us briefly look at global derived moduli of schemes; consider the assignment
\begin{eqnarray*}
\mathbf{Stack}_{n/k}^0:&\mathfrak{Alg}_k&\xrightarrow{\hspace*{1cm}}\mathfrak{sCat} \\
&A&\longmapsto\mathbf{Stack}_{n/k}^0\left(A\right):=\text{ simplicial category of algebraic $n$-spaces over } A
\end{eqnarray*} 
which is the simplicial (underived) moduli functor classifying (underived) $0$-stacks of dimension $n$ over $k$. Pridham has shown that such a functor induces a derived stack $\mathcal D\mathcal Sch_{n/k}$ parametrising derived schemes over $k$ of dimension $n$: see \cite{Pr3} Example 3.36 for a detailed construction. Unfortunately the stack $\mathcal D\mathcal Sch_{n/k}$ is far too large to be geometric, nonetheless Pridham constructed many interesting geometric substacks of its: see \cite{Pr3} Section 3 for more details.
\begin{rem} \label{DSch}
The derived deformation functor $\mathbb R\mathrm{Def}_X$ is the formal neighbourhood of the derived stack $\mathcal D\mathcal Sch_{d/k}$ at $X$; in particular, it follows -- using either \cite{Pr2} Theorem 8.8 or \cite{Pr3} Theorem 10.8 -- that
\begin{equation*}
H^i\left(\mathrm{BDel}_X\right)\simeq H^i\left(\mathbb R\mathrm{Def}_X\right)\simeq\mathrm{Ext}^{i+1}_{\mathscr O_X}\left(\mathbb L_{X/k},\mathscr O_X\right)
\end{equation*}
\end{rem}
\begin{thm} \label{main}
The functors $\mathbb R\mathrm{Def}_X$ and $\mathbb R\mathrm{Def}_{KS_X}$ are weakly equivalent.
\end{thm}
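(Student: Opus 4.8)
The plan is to transpose the classical comparison of Theorem \ref{Donatella} to the derived world: I will construct an explicit morphism of formal groupoids $\mathscr O\colon\mathrm{Del}_{KS_X}\to\mathrm{Del}_X$, deduce from it a morphism of derived deformation functors $\mathbb R\mathrm{Def}_{KS_X}\to\mathbb R\mathrm{Def}_X$, and then show this morphism is a weak equivalence by comparing generalised tangent spaces. First I would unwind, using the resolution fixed in Warning \ref{KS resol}, what a Maurer--Cartan element $\xi\in\widetilde{\mathrm{MC}}_{KS_X}(A)$ is for $A\in\mathfrak{dgArt}_k^{\leq 0}$: a degree-one element of $\Gamma(X,\mathscr A^{0,*}_X(\mathscr T_X))\otimes\mathfrak m_A$ solving $d(\xi)+\tfrac12[\xi,\xi]=0$. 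Such a $\xi$ twists the Dolbeault differential of the (completed) structure sheaf via the Lie derivative; concretely, on each affine open $U$ one takes $\mathscr O_{A,\xi}$ to be a cofibrant model of the graded-commutative $A$-algebra $\Gamma(U,\mathscr A^{0,*}_X)\otimes A$ equipped with the differential $\bar\partial+l_\xi+d_A$, together with its projection onto $\mathscr O_X$. When $A$ is discrete this recovers Iacono's construction $\mathscr O_\xi$ from Theorem \ref{Donatella}, and for general $A\in\mathfrak{dgArt}_k^{\leq 0}$ one checks it is a homotopy-flat, cofibrant derived deformation of $X$ over $A$; gauge transformations $e^a$ with $a\in(KS_X\otimes\mathfrak m_A)^0$ are sent to the induced isomorphisms of deformations. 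This makes $\mathscr O$ a natural transformation of formal groupoids, and passing to nerves and then to right derived functors (identifying $\underline{\mathrm{BDel}}_{KS_X}$ with the Hinich nerve $\mathbb R\mathrm{Def}_{KS_X}$ via Theorem \ref{Del grpd}) yields the desired morphism $\mathscr O\colon\mathbb R\mathrm{Def}_{KS_X}\to\mathbb R\mathrm{Def}_X$.

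To see that $\mathscr O$ is a weak equivalence it suffices, by Theorem \ref{compare def theories} together with the description of generalised tangent spaces recalled in Remark \ref{Del grpd coho} (a morphism of derived deformation functors is an equivalence precisely when it induces isomorphisms on all $H^i$), to check it is an isomorphism on each generalised tangent space. On the source side, Theorem \ref{Del grpd} and Remark \ref{Del grpd coho} give $H^i(\mathbb R\mathrm{Def}_{KS_X})\simeq H^{i+1}(X,\mathscr T_X)$ for $i\geq 0$, $H^{-1}(\mathbb R\mathrm{Def}_{KS_X})\simeq H^0(X,\mathscr T_X)$, and vanishing for $i\leq -2$ (since $KS_X\simeq\mathbb R\Gamma(X,\mathscr T_X)$ lives in non-negative degrees). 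On the target side, Remark \ref{DSch} gives $H^i(\mathbb R\mathrm{Def}_X)\simeq\mathrm{Ext}^{i+1}_{\mathscr O_X}(\mathbb L_{X/k},\mathscr O_X)$; since $X$ is smooth over $k$ the cotangent complex reduces to $\mathbb L_{X/k}\simeq\Omega^1_{X/k}$, a locally free sheaf in degree zero, so $\mathrm{Ext}^{i+1}_{\mathscr O_X}(\mathbb L_{X/k},\mathscr O_X)\simeq H^{i+1}(X,\mathscr T_X)$ and both sides vanish for $i\leq -2$. Finally I would trace through the definition of $\mathscr O$ to see that it realises, degree by degree, the canonical quasi-isomorphism between the Dolbeault resolution $\mathscr A^{0,*}_X(\mathscr T_X)$ of $\mathscr T_X$ and $\mathbb R\mathrm{Hom}_{\mathscr O_X}(\Omega^1_{X/k},\mathscr O_X)$; in particular on $H^0$ it is the tangent isomorphism of the classical comparison of Theorem \ref{Donatella}. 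Hence $\mathscr O$ is an isomorphism on all generalised tangent spaces, and the theorem follows.

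Alternatively one can argue entirely abstractly: by Remark \ref{DSch} the functor $\mathbb R\mathrm{Def}_X$ is the formal neighbourhood at $X$ of the derived stack $\mathcal D\mathcal Sch_{d/k}$, which by Pridham's representability results (\cite{Pr3}) is governed by its tangent dgla $\mathbb R\mathrm{Hom}_{\mathscr O_X}(\mathbb L_{X/k},\mathscr O_X)$ equipped with the natural commutator bracket; for $X$ smooth this is quasi-isomorphic as a dgla to the Kodaira--Spencer dgla $KS_X$, and Theorem \ref{compare def theories} concludes. I would include this remark for context but rely on the explicit morphism $\mathscr O$ as the primary argument, since that same morphism is what underlies the geometric description \eqref{geom FMM} of the period map later in the paper.

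The step I expect to be the main obstacle is the explicit construction: verifying that the twisted object $\mathscr O_{A,\xi}$ really satisfies all four conditions defining a derived deformation of $X$ (homotopy flatness over $A$, weak equivalence of $\mathscr O_{A,\xi}\otimes^{\mathbb L}_A k$ with $\mathscr O_X$, surjectivity, and cofibrancy), that this holds functorially and in families over all of $\mathfrak{dgArt}_k^{\leq 0}$, and that gauge equivalences are carried to genuine isomorphisms of deformations. This is the honest derived upgrade of the already non-trivial proof of Theorem \ref{Donatella} (as in \cite{Iac1} Theorem II.7.3 or \cite{Iac2} Theorem 3.4); once it is in place the tangent-space comparison above is routine.
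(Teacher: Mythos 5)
Your strategy is exactly the one the paper follows: by Theorem \ref{Del grpd} it suffices to define the comparison on the Deligne groupoid of $KS_X$, sending a Maurer--Cartan element $x$ to the twisted Dolbeault algebra $\mathbb R\mathscr O_A\left(x\right):=\left(\mathscr A^{0,*}_X\otimes A,\bar{\partial}+l_x\right)$ (truncated to non-positive degrees) with its projection to $\mathscr O_X$, and a gauge element to $e^{\xi}$; the weak equivalence is then detected on generalised tangent spaces, where Remark \ref{Del grpd coho} gives $H^{i+1}\left(X,\mathscr T_X\right)$ on the source and Remark \ref{DSch} plus smoothness ($\mathbb L_{X/k}\simeq\Omega^1_{X/k}$) gives $\mathrm{Ext}^{i+1}_{\mathscr O_X}\left(\mathscr O_X,\mathscr T_X\right)\simeq H^{i+1}\left(X,\mathscr T_X\right)$ on the target, with the induced maps being the evident identifications. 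So the architecture and the tangent-space computation are in agreement with the paper.

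The genuine gap is the step you yourself flag and then leave undone: the verification that $\tau^{\leq 0}\mathbb R\mathscr O_A\left(x\right)\rightarrow\mathscr O_X$ is a \emph{derived} deformation of $X$ for every $A\in\mathfrak{dgArt}_k^{\leq 0}$. Writing ``one checks it is a homotopy-flat, cofibrant derived deformation'' is not a proof, and this check is the bulk of the paper's argument. Concretely two things must be established: first, that the truncation is harmless, i.e. that $\mathbb R\mathscr O_A\left(x\right)$ has no cohomology in positive degrees, so $\tau^{\leq 0}\mathbb R\mathscr O_A\left(x\right)\simeq\mathbb R\mathscr O_A\left(x\right)$; second, that $\mathbb R\mathscr O_A\left(x\right)\otimes_A H^0\left(A\right)$ is flat over $H^0\left(A\right)$ (equivalently, homotopy flatness, via \cite{Pr3} Lemma 3.13), which also yields that $-\otimes^{\mathbb L}_A k$ recovers $\mathscr O_X$. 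The paper obtains both by filtering $\mathbb R\mathscr O_A\left(x\right)$ (and, for flatness, $\mathbb R\mathscr O_A\left(x\right)\otimes_A M$ for an arbitrary $H^0\left(A\right)$-module $M$) by powers of the maximal ideal: the associated graded kills the twist $l_x$, reducing to $\mathscr A^{0,*}_X\otimes\mathrm{Gr}^p$, and the convergent spectral sequence together with $H^{i}\left(\mathscr A^{0,*}_X\right)=0$ for $i>0$ gives vanishing of positive cohomology and of $\mathrm{Tor}^A_n\left(\mathbb R\mathscr O_A\left(x\right),M\right)$ for $n\neq 0$. Without this argument (or a substitute) your morphism is not known to land in $\mathrm{Del}_X$ at all, so the tangent-space comparison has nothing to apply to. Two smaller points: cofibrancy needs no further replacement (the complex is bounded above with projective underlying graded module, and passing to an unspecified cofibrant model would obscure the naturality you need), and note that in the paper the twist is by $l_x$ with the differential of $A$ already built into the tensor product, matching your $\bar{\partial}+l_\xi+d_A$.
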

\begin{proof}
We want to construct a natural transformation
\begin{equation*}
\mathbb R\mathrm{Def}_{KS_X}\xrightarrow{\hspace*{1cm}}\mathbb R\mathrm{Def}_X
\end{equation*}
providing a weak equivalence between such derived deformation functors, i.e. an isomorphism on the level of homotopy categories. \\
Again, by Theorem \ref{Del grpd} and the definition of $\mathbb R\mathrm{Def}_X$ it is enough to define such a morphism on $\mathrm{BDel}_{KS_X}$, thus define the map
\begin{eqnarray} \label{mu}
&\qquad\qquad\mu:\mathrm{BDel}_{KS_X}&\xrightarrow{\hspace*{1.5cm}}\mathrm{BDel}_X \nonumber \\
\text{for all }A\in\mathfrak{dgArt}^{\leq 0}_k&\mathrm{MC}_{KS_X}\left(A\right)\ni x&\longmapsto\left[\tau^{\leq 0}\mathbb R\mathscr O_A\left(x\right)\rightarrow\tau^{\leq 0}\mathbb R\mathscr O_k\left(x\right)\simeq\mathscr O_X\right] \nonumber \\
&\mathrm{Gg}_{KS_X}\left(A\right)\ni\xi&\longmapsto\left[\vcenter{\xymatrix{\tau^{\leq 0}\mathbb R\mathscr O_A\left(x_1\right)\ar[dd]_{e^{\xi}}\ar[dr] & \\ & \mathscr O_X \\ \tau^{\leq 0}\mathbb R\mathscr O_A\left(x_2\right)\ar[ur] &} }\right]
\end{eqnarray}
where $\mathbb R\mathscr O_A\left(x\right):=\left(\mathscr A^{0,*}_X\otimes A,\bar{\partial}+l_{x}\right)$, $l$ being the Lie derivative (i.e. the differential of the contraction map) and the map $\tau^{\leq 0}\mathbb R\mathscr O_A\left(x\right)\rightarrow\tau^{\leq 0}\mathbb R\mathscr O_k\left(x\right)$ is induced by $A\twoheadrightarrow \nicefrac{A}{\mathfrak m_A}\simeq k$. Notice that the complex $\mathbb R\mathscr O_A\left(x\right)$ is cofibrant, since it is bounded above and the underlying graded module $\mathscr A^{0,*}_X\otimes A$ is projective. Furthermore observe that the surjectivity of the natural map $A\twoheadrightarrow \nicefrac{A}{\mathfrak m_A}$ together with the surjectivity of the canonical morphism $\tau^{\leq 0}\mathbb R\mathscr O_A\left(x\right)\twoheadrightarrow H^0\left(\tau^{\leq 0}\mathbb R\mathscr O_A\left(x\right)\right)$ -- due in turn to the fact that the complex $\tau^{\leq 0}\mathbb R\mathscr O_A\left(x\right)$ lives in non-positive degrees -- ensures that the morphism $\tau^{\leq 0}\mathbb R\mathscr O_A\left(x\right)\rightarrow\mathscr O_X$ is surjective.\\
In order to show that map \ref{mu} is well-defined, we have to check that  
\begin{equation*}
\left[\tau^{\leq 0}\mathbb R\mathscr O_A\left(x\right)\rightarrow\tau^{\leq 0}\mathbb R\mathscr O_k\left(x\right)\simeq\mathscr O_X\right]
\end{equation*}
actually determines a derived deformation of the scheme $X$, i.e. we need to prove that $\tau^{\leq 0}\mathbb R\mathscr O_A\left(x\right)$ is homotopy flat over $A$ and $\tau^{\leq 0}\mathbb R\mathscr O_A\left(x\right)\otimes^{\mathbb  L}_Ak$ is weakly equivalent to $\mathscr O_X$ as complexes of presheaves of differential graded commutative $k$-algebras: this essentially means to verify that $\mathbb R\mathscr O_A\left(x\right)\otimes^{\mathbb L}_A H^0\left(A\right)$ is flat over $H^0\left(A\right)$. \\
Let us first prove that $\tau^{\leq 0}\mathbb R\mathscr O_A\left(x\right)$ is weakly equivalent to $\mathbb R\mathscr O_A\left(x\right)$. Filter the latter complex by powers of the maximal ideal $\mathfrak m_A$ of $A$, i.e. define the filtered complex $\left(\mathbb R\mathscr O_A\left(x\right),\mathcal F^{\bullet}\right)$ through the relation
\begin{equation*}
\mathcal F^p\mathbb R\mathscr O_A\left(x\right):=\mathfrak m_A^p\mathbb R\mathscr O_A\left(x\right)
\end{equation*} 
and take the associated graded object
\begin{equation} \label{assoc grad}
\mathrm{Gr}^p\left(\mathcal F^{\bullet}\right):=\frac{\mathfrak m_A^p\mathbb R\mathscr O_A\left(x\right)}{\mathfrak m_A^{p+1}\mathbb R\mathscr O_A\left(x\right)}.
\end{equation}
Notice that formula \eqref{assoc grad}, so
\begin{equation*}
\mathrm{Gr}^p\left(\mathcal F^{\bullet}\right)\simeq\mathscr A^{0,*}_X\otimes\frac{\mathfrak m_A^p}{\mathfrak m_A^{p+1}}.
\end{equation*}
Now consider the spectral sequence
\begin{equation}\label{spec seq}
H^{p+q}\left(\mathscr A_X^{0,*}\otimes\frac{\mathfrak m_A^p}{\mathfrak m_A^{p+1}}\right)\simeq\bigoplus_{i+j=p+q}\left(H^i\left(\mathscr A^{0,*}_X\right)\otimes H^j\left(\frac{\mathfrak m_A^p}{\mathfrak m_A^{p+1}}\right)\right)\Longrightarrow H^{p+q}\left(\mathbb R\mathscr O_A\left(x\right)\right)
\end{equation}
which converges by the \emph{Classical Convergence Theorem} (see \cite{We} Theorem 5.5.1); note that $H^j\left(\frac{\mathfrak m_A^p}{\mathfrak m_A^    {p+1}}\right)=0$ when $j>0$ and, since the {} ``Dolbeaut'' resolution $\mathscr A^{0,*}_X\hookleftarrow \mathscr O_X$ provides a weak equivalence between $\mathscr A^{0,*}_X$ and $\mathscr O_X$ in the category of $\mathscr O_X$-modules in complexes, also $H^i\left(\mathscr A^{0,*}_X\right)=0$ when $i>0$: this means that at least one of these two terms vanishes whenever $i+j>0$, so the convergence of spectral sequence \eqref{spec seq} implies that
\begin{equation*}
H^n\left(\mathbb R\mathscr O_A\left(x\right)\right)=0\qquad\forall n>0.
\end{equation*} 
In particular $\mathbb R\mathscr O_A\left(x\right)$ and $\tau^{\leq 0}\mathbb R\mathscr O_A\left(x\right)$ are weakly equivalent. \\
Now we want to prove that $\tau^{\leq 0}\mathbb R\mathscr O_A\left(x\right)\otimes^{\mathbb L}_AH^0\left(A\right)$ is flat over $H^0\left(A\right)$; first notice that 
\begin{equation*}
\tau^{\leq 0}\mathbb R\mathscr O_A\left(x\right)\otimes^{\mathbb L}_AH^0\left(A\right)\approx\mathbb R\mathscr O_A\left(x\right)\otimes^{\mathbb L}_AH^0\left(A\right)\approx\mathbb R\mathscr O_A\left(x\right)\otimes_AH^0\left(A\right)\footnote{The symbol $\approx$ stands for {}``weakly equivalent''.}
\end{equation*}
so it is enough to show that $\mathbb R\mathscr O_A\left(x\right)\otimes_AH^0\left(A\right)$ is flat over $H^0\left(A\right)$. In order to prove this let $M$ be any $H^0\left(A\right)$-module, consider the complex 
\begin{equation*}
\left(\mathbb R\mathscr O_A\left(x\right)\otimes_AH^0\left(A\right)\right)\otimes_{H^0\left(A\right)}M\simeq\mathbb R\mathscr O_A\left(x\right)\otimes_AM.
\end{equation*}
and filter it by powers of the maximal ideal $\mathfrak m_{H^0\left(A\right)}$ of $H^0\left(A\right)$, i.e. define the filtered complex $\left(\mathbb R\mathscr O_A\left(x\right)\otimes_AM,\mathcal F^{\bullet}\right)$ through the relation
\begin{equation*}
\mathcal F^p\left(\mathbb R\mathscr O_A\left(x\right)\otimes_AM\right):=\mathfrak m_{H^0\left(A\right)}^p\left(\mathbb R\mathscr O_A\left(x\right)\otimes_AM\right).
\end{equation*}
As before, the associated graded object is
\begin{equation*}
\mathrm{Gr}^p\left(\mathcal F\right):=\frac{\mathfrak m_{H^0\left(A\right)}^p\left(\mathbb R\mathscr O_A\left(x\right)\otimes_AM\right)}{\mathfrak m_{H^0\left(A\right)}^{p+1}\left(\mathbb R\mathscr O_A\left(x\right)\otimes_AM\right)}\simeq\mathscr A^{0,*}_X\otimes\frac{\mathfrak m^p_{H^0\left(A\right)}M}{\mathfrak m^{p+1}_{H^0\left(A\right)}M}
\end{equation*}
and there is a spectral sequence
\begin{equation} \label{spec seq 2}
H^{p+q}\left(\mathscr A^{*,0}_X\otimes\frac{\mathfrak m_{H^0\left(A\right)}^pM}{\mathfrak m_{H^0\left(A\right)}^{p+1}M}\right)\Longrightarrow H^{p+q}\left(\mathbb R\mathscr O_A\left(x\right)\otimes_AM\right)
\end{equation}
which still converges because of the Classical Convergence Theorem. Of course 
\begin{equation*}
H^{p+q}\left(\mathscr A^{*,0}_X\otimes\frac{\mathfrak m_{H^0\left(A\right)}^pM}{\mathfrak m_{H^0\left(A\right)}^{p+1}M}\right)\simeq\bigoplus_{i+j=p+q}\left(H^i\left(\mathscr A^{0,*}_X\right)\otimes H^j\left(\frac{\mathfrak m_{H^0\left(A\right)}^pM}{\mathfrak m_{H^0\left(A\right)}^{p+1}M}\right)\right)
\end{equation*}
and $H^j\left(\frac{\mathfrak m_{H^0\left(A\right)}^pM}{\mathfrak m_{H^0\left(A\right)}^{p+1}M}\right)=0$ for all $j\neq 0$, thus the convergence of spectral sequence \eqref{spec seq 2} implies
\begin{equation*}
\mathrm{Tor}_n^A\left(\mathbb R\mathscr O_A\left(x\right),M\right)\simeq H^{-n}\left(\mathbb R\mathscr O_A\left(x\right)\otimes_AM\right)=0\qquad\forall n\neq0
\end{equation*}
which gives us the flatness of $\tau^{\leq 0}\mathbb R\mathscr O_A\left(x\right)\otimes^{\mathbb L}_AH^0\left(A\right)$ over $H^0\left(A\right)$. Notice that the same computation also ensures that the map 
\begin{equation*}
\left[\tau^{\leq 0}\mathbb R\mathscr O_A\left(x\right)\rightarrow\tau^{\leq 0}\mathbb R\mathscr O_k\left(x\right)\simeq\mathscr O_X\right]
\end{equation*} 
is quasi-smooth.\\
Now we want to prove that map \ref{mu} is a weak equivalence of derived deformation functors; by \cite{Pr1} Corollary 1.49 it suffices to check that such a map induces isomorphisms on generalised tangent spaces, so consider the morphisms
\begin{equation*}
H^i\left(\mu\right):H^i\left(\mathrm{BDel}_{KS_X}\right)\longrightarrow H^i\left(\mathrm{BDel}_X\right)\qquad\quad i\geq -1
\end{equation*}
and notice that higher tangent maps in larger negative degrees vanish as $KS_X$ lives only in non-negative degrees. \\
For all $i\geq 0$ we have the chain of canonical identifications
\begin{equation*}
H^i\left(\mathbb R\mathrm{Def}_{KS_X}\right)\simeq H^i\left(\mathrm{BDel}_{KS_X}\right)\simeq H^{i+1}\left(KS_X\right)\simeq H^{i+1}\left(X,\mathscr A_X^{0,*}\left(\mathscr T_X\right)\right)\simeq H^{i+1}\left(X,\mathscr T_X\right)
\end{equation*}
where the first and the second isomorphism come from Remark \ref{Del grpd coho}, the third one is true just by definition and the last one is given by the Dolbeaut Theorem. In the same fashion, there is also a chain of canonical isomorphisms
\begin{eqnarray*}
&H^i\left(\mathrm{BDel}_X\right)\simeq\mathrm{Ext}^{i+1}_{\mathscr O_X}\left(\mathbb L_{X/k},\mathscr O_X\right)\simeq\mathrm{Hom}_{\mathrm D\left(X\right)}\left(\Omega^1_{X/k}\otimes \mathscr O_X,\mathscr O_X\left[-i-1\right]\right)\simeq& \\
&\mathrm{Hom}_{\mathrm D\left(X\right)}\left(\mathscr O_X,\mathcal Hom\left(\Omega^1_{X/k},\mathscr O_X\right)\left[-i-1\right]\right)\simeq\mathrm{Hom}_{\mathrm D\left(X\right)}\left(\mathscr O_X,\mathscr T_X\left[-i-1\right]\right)\simeq\mathrm{Ext}_{\mathscr O_X}^{i+1}\left(\mathscr O_X,\mathscr T_X\right)&
\end{eqnarray*}
where the first isomorphism -- as we discussed before -- comes from the fact that $\mathbb R\mathrm{Def}_X$ is the formal neighbourhood of a derived stack of schemes, the third one is true by adjunction, while all the other ones directly follow from definitions. \\
Finally, for all $i\geq 0$ we see that the map $H^i\left(\mu\right)$ is
\begin{eqnarray} \label{H^i}
H^i\left(\mu\right):&H^{i+1}\left(X,\mathscr T_X\right)&\xrightarrow{\hspace*{0.3cm}}\quad\;\mathrm{Ext}^{i+1}_{\mathscr O_X}\left(\mathscr O_X,\mathscr T_X\right) \nonumber \\
&\xi&\longmapsto\left(\mathscr O_X\overset{\xi}{\longrightarrow}\mathscr T_X\left[-i-1\right]\right)
\end{eqnarray} 
where the (cohomology class of the) degree $i$ morphism $\mathscr O_X\overset{\xi}{\longrightarrow}\mathscr T_X\left[-i-1\right]$ is nothing but the map\footnote{Of course, there is some abuse of notation in this sentence.} induced in $\mathrm D\left(X\right)$ by the cocycle $\xi$;
on the other hand -- again by using Remark \ref{Del grpd coho} -- the map $H^{-1}\left(\mu\right)$ turns out to be
\begin{eqnarray} \label{H^-1}
H^{-1}\left(\mu\right):&H^0\left(KS_X\right)\simeq\mathrm{Stab}_{\mathrm{Gg}_{\Gamma\left(X,\mathscr A^{0,0}_X\left(\mathscr T_X\right)\right)}\left(\frac{k\left[\varepsilon\right]}{\left(\varepsilon^2\right)}\right)}\left(0\right)&\xrightarrow{\hspace*{0.3cm}} H^0\left(X,\mathscr T_X\right)\simeq\mathrm{Ext}^1_{\mathscr O_X}\left(\mathscr O_X,\mathscr T_X\right)\nonumber \\
&\qquad\qquad\qquad\quad\mathrm{Id}+\xi&\longmapsto \qquad\;\xi.
\end{eqnarray}
Both map \eqref{H^-1} and map \eqref{H^i} are clearly isomorphisms, so this completes the proof.
\end{proof}
\begin{rem} \label{Def_X der enh}
$\mathbb R\mathrm{Def}_X$ is a derived enhancement of $\mathrm{Def}_X$, i.e.
\begin{equation*}
\pi^0\pi_{\leq 0}\mathbb R\mathrm{Def}_X\simeq\mathrm{Def}_X.
\end{equation*}
\end{rem}
\subsection{The Geometric Fiorenza-Manetti-Martinengo Period Map}
Now we have all the ingredients to give a geometric interpretation of the map $\mathrm{FMM}$ described in Definition \ref{alg Fio-Man-Mar}.
\begin{defn} \label{g FMM map}
Define the \emph{(universal) geometric Fiorenza-Manetti-Martinengo local period map} to be the morphism of derived deformation functors
\begin{equation*}
\mathbb R\mathcal P:\mathbb R\mathrm{Def}_X\xrightarrow{\hspace*{0.75cm}}\mathrm{hoFlag}^{F^{\bullet}}_{\mathbb R\Gamma\left(X,\Omega^*_{X/k}\right)}
\end{equation*}
identified by the universal morphism of derived deformation functors under the map of derived pre-deformation functors given for all $A\in\mathfrak{dgArt}^{\leq 0}_k$ by
\begin{eqnarray*} 
&\mathrm{BDel}_X&\xrightarrow{\hspace*{0.75cm}}\qquad\mathrm{hoFlag}^{F^{\bullet}}_{\mathbb R\Gamma\left(X,\Omega^*_{X/k}\right)} \\
&\left[\mathscr O_{A,*}\overset{\varphi}{\longrightarrow}\mathscr O_X\right]\approx\left[\vcenter{\xymatrix{\ar@{} |{\Box^h}[dr]X\ar@{^{(}->}[r]\ar[d] & \mathcal X\ar[d] \\
\mathrm{Spec}\left(k\right)\ar[r] & \mathbb R\mathrm{Spec}\left(A\right)}}\right]&\mapsto \left[\left(\left(\mathbb R\Gamma\left(\pi^0\mathcal X,\Omega^*_{\mathcal X/A}\right),F^{\bullet}\right),\tilde{\varphi}\right)\right] \\
&\left[\vcenter{\xymatrix{\mathscr O_{A,*}'\ar[dd]_{\Psi}\ar[dr]^{\varphi'} & \\ & \mathscr O_X \\ \mathscr O_{A,*}''\ar[ur]_{\varphi''} &} }\right]& \mapsto\left[\vcenter{\xymatrix{\left(\left(\mathbb R\Gamma\left(\pi^0\mathcal X,\Omega^*_{\mathcal X'/A}\right),F^{\bullet}\right),\tilde{\varphi'}\right)\ar[d]^{\tilde{\Psi}} \\ \left(\left(\mathbb R\Gamma\left(\pi^0\mathcal X,\Omega^*_{\mathcal X''/A}\right),F^{\bullet}\right),\tilde{\varphi''}\right)}}\right]
\end{eqnarray*}
where 
\begin{itemize}
\item $\tilde{\varphi}$ is the derived globalisation of the natural $A$-linear map extending $\varphi$ to the algebraic De Rham complex;
\item $\tilde{\Psi}$ is constructed by using the same universal property;
\item the complex 
\begin{equation*}
\mathbb R\Gamma\left(\pi^0\mathcal X,\Omega^*_{\mathcal X/A}\right):=\underset{i}{\prod}\left(\bigwedge^i\mathbb L_{\mathcal X/A}\right) 
\end{equation*}
is sometimes known as \emph{derived de Rham complex} and the Hodge filtration over it is just
\begin{equation*}
F^p\mathbb R\Gamma\left(\pi^0\mathcal X,\Omega^*_{\mathcal X/A}\right):=\underset{i\geq p}{\prod}\left(\bigwedge^i\mathbb L_{\mathcal X/A}\right).
\end{equation*}
\end{itemize}
\end{defn}
\begin{rem}
The fibrant-cofibrant replacement properties pointed out in Remark \ref{fib-cof flag} and Remark \ref{fib-cof X} ensure that the geometric Fiorenza-Manetti-Martinengo local period map described in Definition \ref{g FMM map} is well-defined.
\end{rem}
In the end all constructions and results we have discussed so far sum up in the following theorem.
\begin{thm} \label{univ lpm ddf}
The diagram of derived deformation functors and (Schlessinger's) deformation functors
\begin{equation*}
\xymatrix{& & \mathbb R\mathrm{Def}_{KS_X}\ar[rrr]^{\mathrm{FMM}\qquad\quad}\ar[dll]^{\sim}\ar[ddd]_{\pi^0\pi_{\leq 0}} & & &  \mathbb R\mathrm{Def}_{\frac{\mathrm{End}^*\left(\mathbb R\Gamma\left(X,\Omega^*_{X/k}\right)\right)}{\mathrm{End}^{\geq 0}\left(\mathbb R\Gamma\left(X,\Omega^*_{X/k}\right)\right)}\left[-1\right]}\ar[dl]^{\sim}\ar[ddd]^{\pi^0\pi_{\leq 0}} \\
\mathbb R\mathrm{Def}_X\ar[rrrr]^{\mathbb R\mathcal P}\ar[ddd]_{\pi^0\pi_{\leq 0}} & & & & \mathrm{hoFlag}^{F^{\bullet}}_{\mathbb R\Gamma\left(X,\Omega^*_{X/k}\right)}\ar[ddd]^{\pi^0\pi_{\leq 0}} & \\
& & & & &\\
& &\mathrm{Def}_{KS_X}\ar[rrr]^{\mathrm{FM}\qquad\quad}\ar[dll]^{\sim} & & & \mathrm{Def}_{\frac{\mathrm{End}^*\left(H^*\left(X,k\right)\right)}{\mathrm{End}^{\geq 0}\left(H^*\left(X,k\right)\right)}\left[-1\right]}\ar[dl]^{\sim} \\
\mathrm{Def}_X\ar[rrrr]^{\mathcal P} & & & & \mathrm{Flag}_{H^*\left(X,k\right)}^{F^{\bullet}} &
}
\end{equation*}
commutes up to isomorphism; in particular the morphisms $\mathbb R\mathcal P$ and $\mathrm{FMM}$ are equivalent.
\end{thm}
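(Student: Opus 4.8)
The plan is to build the big cube out of the commutativities that are already in place and to verify directly only the one genuinely new face, namely the top square comparing $\mathbb R\mathcal P$ with $\mathrm{FMM}$. First I would fix the ingredients: the left ``$\sim$'' on the top face is the weak equivalence $\mu$ of Theorem \ref{main}; the right ``$\sim$'' is the composite of the identification of the target dgla of $\mathrm{FMM}$ with the homotopy fibre $C^{F^{\bullet}}_{\mathbb R\Gamma\left(X,\Omega^*_{X/k}\right)}$ coming from formula \eqref{tgt hoflag}, followed by the weak equivalence $\mathbb R\mathrm{Def}_{C^{F^{\bullet}}_{\mathbb R\Gamma\left(X,\Omega^*_{X/k}\right)}}\simeq\mathrm{hoFlag}^{F^{\bullet}}_{\mathbb R\Gamma\left(X,\Omega^*_{X/k}\right)}$ of Proposition \ref{hoFlag dgla}.2. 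The four vertical arrows are the truncations $\pi^0\pi_{\leq 0}$, and the bottom prism is precisely the content of Theorem \ref{Fiorenza-Martinengo} (using the formality comparison of Remark \ref{Martinengo} and map \eqref{flag htpy stability} to match the $\mathbb R\Gamma$- and $H^*\left(X,k\right)$-versions of the cone and of the flag functor that appear there). It then remains to check $(i)$ that the top square commutes up to homotopy, and $(ii)$ that the two side squares joining top to bottom commute compatibly with the truncations.

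For $(ii)$, the back-left side square asks that $\pi^0\pi_{\leq 0}\mu$ be isomorphic to the classical Kodaira--Spencer identification $\mathscr O$ of Theorem \ref{Donatella}; this is immediate from the construction of $\mu$, since on functions $l_x\left(f\right)=x\lrcorner\partial f$, so $H^0$ of $\mathbb R\mathscr O_A\left(x\right)=\left(\mathscr A^{0,*}_X\otimes A,\bar\partial+l_x\right)$ is exactly the sheaf $\mathscr O_\xi$ of \eqref{Def_X=Def_KS}, whence $\pi^0\pi_{\leq 0}\circ\mu\cong\mathscr O$. The front-right side square, after applying $\pi^0\pi_{\leq 0}$, reduces to the statement that the equivalence of Proposition \ref{hoFlag dgla}.2 induces on $0$-truncations the classical isomorphism $\mathrm{Flag}^{F^{\bullet}}_{H^*\left(X,k\right)}\simeq\mathrm{Def}_{C^{F^{\bullet}}_{H^*\left(X,k\right)}}$ of Proposition \ref{gen und Flag dgla}; this is the content of Proposition \ref{Flag der enh}.2 together with the formality argument of Remark \ref{Martinengo}, which also identifies the $\mathbb R\Gamma\left(X,\Omega^*_{X/k}\right)$-model of the cone with its $H^*\left(X,k\right)$-model.

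The heart of the matter is $(i)$. By Theorem \ref{Del grpd} and the universal properties defining $\mathbb R\mathrm{Def}_X$, $\mathbb R\mathcal P$ and $\mathrm{FMM}$, it suffices to exhibit a homotopy between the two composites already at the level of maps out of $\mathrm{BDel}_{KS_X}$. On one side, $x\in\mathrm{MC}_{KS_X}\left(A\right)$ is sent by $\mathrm{FMM}$ to $\left(\tilde l_x,e^{\tilde i_x}\right)$, which under the cone model of \eqref{tgt hoflag} and the map $\nu$ from the proof of Lemma \ref{Def_End} becomes the filtered derived deformation with underlying complex $\bigl(\Gamma\left(X,\mathscr A^{*,*}_X\right)\otimes A,\,D+\tilde l_x\bigr)$ --- $D=\partial+\bar\partial$ being the total differential of the Dolbeault bicomplex computing $\mathbb R\Gamma\left(X,\Omega^*_{X/k}\right)$ (Warning \ref{RGamma resol}) --- and with filtration the column (Hodge) filtration transported by the gauge $e^{\tilde i_x}$. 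On the other side, $x$ is sent by $\mu$ to the derived deformation $\mathcal X_x=\bigl(\pi^0\mathcal X_x,\tau^{\leq 0}\mathbb R\mathscr O_A\left(x\right)\bigr)$ of $X$, and then by $\mathbb R\mathcal P$ to $\bigl(\mathbb R\Gamma\left(\pi^0\mathcal X_x,\Omega^*_{\mathcal X_x/A}\right),F^{\bullet}\bigr)$. The comparison then amounts to identifying, naturally in $A$, the derived de Rham complex $\prod_i\bigwedge^i\mathbb L_{\mathcal X_x/A}$ with $\bigl(\Gamma\left(X,\mathscr A^{*,*}_X\right)\otimes A,\,D+\tilde l_x\bigr)$, carrying its Hodge filtration $\prod_{i\geq p}\bigwedge^i\mathbb L_{\mathcal X_x/A}$ to the column filtration conjugated by the gauge isomorphism $e^{\tilde i_x}\colon\bigl(\Gamma\left(X,\mathscr A^{*,*}_X\right)\otimes A,\,D\bigr)\xrightarrow{\ \sim\ }\bigl(\Gamma\left(X,\mathscr A^{*,*}_X\right)\otimes A,\,D+\tilde l_x\bigr)$, the latter being a consequence of the Cartan identities (cf.\ Remark \ref{rem Cart htpy}). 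This is the family version, over a dg-Artinian base, of the classical computation underlying Fiorenza--Manetti's original period map; compatibility with the gauge action (the morphisms of $\mathrm{Del}_{KS_X}$) promotes it to the required homotopy of maps of derived pre-deformation functors, which descends to the universal derived deformation functors.

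I expect the main obstacle to be exactly this last identification: that the derived de Rham complex of the derived deformation $\mathcal X_x$, with its Hodge filtration, is modelled by the twisted Dolbeault bicomplex with the $e^{\tilde i_x}$-conjugated column filtration. This is the dg/derived enhancement of the classical ``deformed de Rham complex'' computation, and one must also carry along the cofibrancy and homotopy-flatness conditions of Remark \ref{fib-cof flag} and Remark \ref{fib-cof X} so that the (derived) tensor products and the filtered-projective model structure behave correctly --- the cofibrancy of $\mathbb R\mathscr O_A\left(x\right)$ established in the proof of Theorem \ref{main} being one of the inputs. Once this is in place, the rest is formal bookkeeping: collating Theorems \ref{Fiorenza-Martinengo} and \ref{main} with Propositions \ref{hoFlag dgla} and \ref{Flag der enh} into a commuting cube, and checking that the $\pi^0\pi_{\leq 0}$-truncation of the top square is the bottom square; the closing assertion that $\mathbb R\mathcal P$ and $\mathrm{FMM}$ are equivalent then follows immediately, since the vertical ``$\sim$'' arrows of the top face are weak equivalences.
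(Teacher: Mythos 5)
Your overall architecture is the same as the paper's: peel off the bottom, back, front, left and right faces by citing Theorems \ref{Fiorenza-Manetti}, \ref{Fiorenza-Martinengo}, \ref{Donatella}, \ref{main}, Corollary \ref{und Flag dgla}, Propositions \ref{hoFlag dgla} and \ref{Flag der enh}, then reduce the only new face (the top square) to a check on $\mathrm{BDel}_{KS_X}$ via Theorem \ref{Del grpd}. Two remarks, one structural and one substantive. Structurally, the paper does not compare the composites directly into $\mathrm{hoFlag}^{F^{\bullet}}$: since both targets are homotopy fibres (Proposition \ref{hoFlag dgla}.2 and Definition \ref{holim def cmplx}), it suffices to check commutativity of the two squares \emph{before} taking fibres, namely one square landing in $\mathbb R\mathrm{Def}_{\mathbb R\Gamma\left(X,\Omega^*_{X/k}\right)}$ versus $\mathrm{BDel}_{\mathrm{End}^*}$, and one landing in the deformations of the subcomplexes versus $\mathrm{BDel}_{\mathrm{End}^{\geq 0}}$. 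In that formulation the deformed filtration on the twisted complex $\left(\mathbb R\Gamma\left(X,\Omega^*_{X/k}\right)\otimes A,d+\tilde l_x\right)$ is just the plain column filtration (preserved because $\tilde l_x\in\mathrm{End}^{\geq 0}$), and $e^{\tilde i_x}$ only ever appears as the trivialisation datum of the homotopy fibre; your packaging via ``the column filtration conjugated by $e^{\tilde i_x}$'' is the classical Fiorenza--Manetti picture and is equivalent, but it forces you to compare gauge-transported filtrations by hand, which the paper's reduction avoids.

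The substantive point is that the step you flag as ``the main obstacle'' is precisely the content of the paper's proof, and your proposal does not supply it. What has to be shown is that the derived de Rham complex $\mathbb R\Gamma\left(X,\Omega^*_{\mathbb R\mathscr O_A\left(x\right)/A}\right)$ of the deformation $\mu\left(x\right)$ is naturally quasi-isomorphic, compatibly with filtrations and with the gauge action, to $\left(\mathbb R\Gamma\left(X,\Omega^*_{X/k}\right)\otimes A,d+\tilde l_x\right)$. The paper proves this by exhibiting the explicit zig-zag through the twisted total Dolbeault complex $\left(\Gamma\left(X,\mathscr A^{*,*}_X\otimes A\right),\partial+\left(\bar\partial+l_x\right)\right)$, and establishing that both arms are quasi-isomorphisms by filtering each column $\Omega^n_{\mathbb R\mathscr O_A\left(x\right)}$ and $\left(\mathscr A^{n,*}_X\otimes A,\bar\partial+l_x\right)$ by powers of $\mathfrak m_A$, observing that the associated graded kills the twist $l_x$, and comparing the resulting convergent spectral sequences; the same $\mathfrak m_A$-adic argument is then repeated on morphisms to show the relevant square of gauge maps commutes. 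Without this (or an equivalent argument), the commutativity of the top square is asserted rather than proved, so as it stands your proposal is a correct plan with the central computation missing rather than a complete proof.
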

\begin{proof}
Notice that:
\begin{itemize}
\item the commutativity of the bottom diagram follows from Theorem \ref{Fiorenza-Manetti} and Theorem \ref{Fiorenza-Martinengo}; 
\item the commutativity (up to isomorphism) of the back diagram corresponds to Theorem \ref{Fiorenza-Martinengo};
\item the commutativity (up to isomorphism) of the front diagram follows immediately from Remark \ref{Def_X der enh}, Proposition \ref{Flag der enh}.2 and the definitions of the maps $\mathcal P$ and $\mathbb R\mathcal P$;
\item the commutativity (up to isomorphism) of the left hand diagram is obtained by combining Theorem \ref{Donatella}, Theorem \ref{main} and Remark \ref{Def_X der enh};
\item the commutativity (up to isomorphism) of the right hand diagram is obtained by combining Corollary \ref{und Flag dgla}, Proposition \ref{hoFlag dgla}.2 and Proposition \ref{Flag der enh}.2. 
\end{itemize}
As regards the top diagram, again by Theorem \ref{Del grpd} it suffices to verify its commutativity up to isomorphism on $\mathrm{BDel}_{KS_X}$; moreover Proposition \ref{hoFlag dgla}.2 and Definition \ref{holim def cmplx} say that the derived deformation functors $\mathbb R\mathrm{Def}_{\frac{\mathrm{End}^*\left(\mathbb R\Gamma\left(X,\Omega^*_{X/k}\right)\right)}{\mathrm{End}^{\geq 0}\left(\mathbb R\Gamma\left(X,\Omega^*_{X/k}\right)\right)}\left[-1\right]}$ and $\mathrm{hoFlag}^{F^{\bullet}}_{\mathbb R\Gamma\left(X,\Omega^*_{X/k}\right)}$ are homotopy fibres, so it is enough to check that the diagrams\footnote{here maps $\mu$ and $\nu$ are the morphisms defined in Theorem \ref{main} and Lemma \ref{Def_End} respectively.}
\begin{equation} \label{diagr1}
\xymatrix{\mathrm{BDel}_{KS_X}\ar[rrrr]^{\mathbb R\mathrm{Def}\left(l\right)\qquad}\ar[d]^{\wr}_{\mu} & & & & \mathrm{BDel}_{\mathrm{End}^*\left(\mathbb R\Gamma\left(X,\Omega^*_{X/k}\right)\right)}\ar[d]_{\wr}^{\nu} \\
\mathbb R\mathrm{Def}_X\ar[rrrr]^{\left(X,\mathscr O_{A,*}\right)=:\mathcal X\mapsto\mathbb R\Gamma\left(\pi^0\mathcal X,\Omega^*_{\mathcal X/A}\right)\qquad\qquad} & & & & \mathbb R\mathrm{Def}_{\mathbb R\Gamma\left(X,\mathbb R\Gamma\left(X,\Omega^*_{X/k}\right)\right)} }
\end{equation}
and
\begin{equation} \label{diagr2}
\xymatrix{\mathrm{BDel}_{KS_X}\ar[rrrrr]^{\mathbb R\mathrm{Def}\left(l\right)\qquad}\ar[d]^{\wr}_{\mu} & & & & & \mathrm{BDel}_{\mathrm{End}^{\geq 0}\left(\mathbb R\Gamma\left(X,\Omega^*_{X/k}\right)\right)}\ar[d]_{\wr}^{\nu} \\
\mathbb R\mathrm{Def}_X\ar[rrrrr]^{\left(X,\mathscr O_{A,*}\right)=:\mathcal X\mapsto\left(\mathbb R\Gamma\left(\pi^0\mathcal X,\Omega^*_{\mathcal X/A}\right),F^{\bullet}\right)\qquad\qquad\qquad\qquad} & & & & &  \mathbb R\mathrm{Def}_{F^{\bullet}\mathbb R\Gamma\left(X,\mathbb R\Gamma\left(X,\Omega^*_{X/k}\right)\right),\mathbb R\Gamma\left(X,\mathbb R\Gamma\left(X,\Omega^*_{X/k}\right)\right)} } \qquad\quad
\end{equation} 
commute up to isomorphism. We are only going to show the commutativity of diagram \eqref{diagr1}, as the commutativity of diagram \eqref{diagr2} is verified by a similar argument.\\
Let us walk along its arrows: for all $A\in\mathfrak{dgArt}_k$ an element $x\in\mathrm{MC}_{KS_X}\left(A\right)$ maps through $\mu$ to $\left[\mathcal X\rightarrow\mathbb R\mathrm{Spec}\left(A\right)\right]$ -- where $\mathcal X=\left(X,\mathbb R\mathscr O_A\left(x\right)\right)$ -- and in turn this is sent to the complex 
\begin{equation} \label{cmplx 1}
\mathbb R\Gamma\left(\pi^0\mathcal X,\Omega^*_{\mathcal X/A}\right)\approx\mathbb R\Gamma\left(X,\Omega^*_{\mathbb R\mathscr O_A\left(x\right)/A}\right)
\end{equation}
which is an honest derived deformation over $A$ of the algebraic De Rham complex $\mathbb R\Gamma\left(X,\Omega^*_{X/k}\right)$; on the other side, the vector $x$ is sent to the derivation $l_x$ and -- proceeding down along map $\nu$ -- this determines the complex
\begin{equation} \label{cmplx 2}
\left(\mathbb R\Gamma\left(X,\Omega^*_{X/k}\right)\otimes A,d+l_x\right). 
\end{equation}
We claim that complexes \eqref{cmplx 1} and \eqref{cmplx 2} are quasi-isomorphic: more precisely, we assert that the natural zig-zag
\begin{equation} \label{resol}
\xymatrix{\mathbb R\Gamma\left(X,\Omega^*_{\mathbb R\mathscr O_A\left(x\right)/A}\right) & \left(\Gamma\left(X,\mathscr A^{*,*}_X\otimes A\right),\partial + \left(\bar{\partial}+l_x\right)\right)\ar[l] & \left(\mathbb R\Gamma\left(X,\Omega^*_{X/k}\right)\otimes A,d+l_x\right)\ar@{=}[l] }
\end{equation}
is a chain of quasi-isomorphisms. The right-hand morphism in diagram \eqref{resol} is essentially given by the resolution \eqref{RGamma}. As regards the left-hand one, this is constructed in the following way: consider the standard Dolbeaut resolution $\mathscr A^{0,*}_X\hookleftarrow\mathscr O_X$ and twist it through the derivation $l_x$, so get a map $\mathbb R\mathscr O_A\left(x\right)\leftarrow\mathscr O_X\otimes A$ and hence a morphism $\Omega^*_{\mathbb R\mathscr O_A\left(x\right)/A}\leftarrow\Omega^*_X\otimes A$; now just recall that $\mathscr A^{*,*}_X\simeq\Omega^*_X\otimes_{\mathscr O_X}\mathscr A^{0,*}_X$: this provides us with a natural map $\Omega^*_{\mathbb R\mathscr O_A\left(x\right)}\leftarrow\mathscr A^{*,*}_X\otimes A$, whose globalisation finally gives us the left-hand map in diagram \eqref{resol}.\\
Now denote 
\begin{equation*}
\mathbb R\mathscr O_A\left(x\right)\left(n\right):=\left(\mathscr A^{n,*}_X\otimes A,\bar{\partial}+l_x\right)
\end{equation*}
and observe that to show that the zig-zag \eqref{resol} is really a chain of quasi-isomorphisms it suffices to prove that the complexes $\mathbb R\mathscr O_A\left(x\right)\left(n\right)$ and $\Omega^n_{\mathbb R\mathscr O_A\left(x\right)}$ are weakly equivalent. As already done in the proof of Theorem \ref{main}, filter them by powers of the maximal ideal $\mathfrak m_A$, i.e. consider the filtrations\footnote{There is some abuse of notation in these formulae.}
\begin{eqnarray*}
&\mathcal F^p\left(\mathbb R\mathscr O_A\left(x\right)\left(n\right)\right):=\mathfrak m_A^p\mathbb R\mathscr O_A\left(x\right)\left(n\right)\quad\Rightarrow\quad\mathrm{Gr}^p\left(\mathcal F\right)\simeq\mathscr A^{n,*}_X\otimes\frac{\mathfrak m_A^p}{\mathfrak m_A^{p+1}}& \\
&\mathcal F^p\left(\Omega^n_{\mathbb R\mathscr O_A\left(x\right)}\right):=\mathfrak m_A^p\Omega^n_{\mathbb R\mathscr O_A\left(x\right)}\quad\Rightarrow\quad\mathrm{Gr}^p\left(\mathcal F\right)\simeq\Omega^n_{\mathscr A^{0,*}_X}\otimes\frac{\mathfrak m_A^p}{\mathfrak m_A^{p+1}}&
\end{eqnarray*}
which kill the twisting $l_x$. Now observe that 
\begin{equation*}
\Omega^n_{\mathscr A^{0,*}}\approx\Omega^n_{\mathscr O_X}\approx\mathscr A_X^{n,*}
\end{equation*}
where the first quasi-isomorphism is induced by the Dolbeaut resolution $\mathscr A^{0,*}_X\hookleftarrow\mathscr O_X$ and the second one is true basically by definition of $\Omega^{n}_{\mathscr O_X}$, so in particular $H^m\left(\Omega^n_{\mathscr A^{0,*}_X}\right)=H^m\left(\mathscr A^{n,*}_X\right)$ for all $m$. Finally, look at the induced spectral sequences: we have
\begin{equation*}
H^{p+q}\left(\mathscr A^{n,*}_X\otimes\frac{\mathfrak m_A^p}{\mathfrak m_A^{p+1}}\right)\simeq\bigoplus_{i+j=p+q}\left(H^i\left(\mathscr A^{n,*}_X\right)\otimes H^j\left(\frac{\mathfrak m_A^p}{\mathfrak m_A^{p+1}}\right)\right)\Longrightarrow H^{p+q}\left(\mathbb R\mathscr O_A\left(x\right)\left(n\right)\right)
\end{equation*}
and
\begin{equation*}
H^{p+q}\left(\Omega^n_{\mathscr A^{0,*}_X}\otimes\frac{\mathfrak m_A^p}{\mathfrak m_A^{p+1}}\right)\simeq\bigoplus_{i+j=p+q}\left(H^i\left(\mathscr A^{n,*}_X\right)\otimes H^j\left(\frac{\mathfrak m_A^p}{\mathfrak m_A^{p+1}}\right)\right)\Longrightarrow H^{p+q}\left(\Omega^n_{\mathbb R\mathscr O_A\left(x\right)}\right)
\end{equation*}
so the complexes $\Omega^n_{\mathbb R\mathscr O_A\left(x\right)}$ and $\mathbb R\mathscr O_A\left(x\right)\left(n\right)$ are quasi-isomorphic as their cohomologies are computed by the same spectral sequence. \\
Now let us look at diagram \eqref{diagr1} on the level of morphisms; a gauge element $\xi$ in the Kodaira-Spencer differential graded Lie algebra associated to $X$ maps through $\mathbb R\mathrm{Def}\left(l\right)$ to $l_{\xi}$, which in turn induces by $\nu$ the morphism of complexes
\begin{equation*}
\mathbb R\Gamma\left(X,e^{l_{\xi}}\right):\left(\mathbb R\Gamma\left(X,\Omega^*_{X/k}\right)\otimes A,d+l_{x_1}\right)\longrightarrow\left(\mathbb R\Gamma\left(X,\Omega^*_{X/k}\right)\otimes A,d+l_{x_2}\right).
\end{equation*}
In a similar way, the gauge $\xi$ determines via $\mu$ the morphism of complexes 
\begin{equation*}
e^{\xi}:\mathbb R\mathscr O_A\left(x_1\right)\longrightarrow\mathbb R\mathscr O_A\left(x_2\right)
\end{equation*}
which in turn induces through the bottom arrow in diagram \eqref{diagr1} the morphism
\begin{equation*}
\mathbb R\Gamma\left(X,\Omega^*_{e^{\xi}}\right):\mathbb R\Gamma\left(X,\Omega^*_{\mathbb R\mathscr O_A\left(x_1\right)/A}\right)\longrightarrow\mathbb R\Gamma\left(X,\Omega^*_{\mathbb R\mathscr O_A\left(x_2\right)/A}\right)
\end{equation*}
therefore we end up with a diagram
\begin{equation} \label{diagr3}
\xymatrix@C=1em{ \mathbb R\Gamma\left(X,\Omega^*_{\mathbb R\mathscr O_A\left(x_1\right)/A}\right)\ar[d]^{\mathbb R\Gamma\left(X,\Omega^*_{e^{\xi}}\right)} & \left(\Gamma\left(X,\mathscr A_X^{*,*}\otimes A\right),\partial+\left(\bar{\partial}+l_{x_1}\right)\right)\ar[l]\ar[d]^{\Gamma\left(X,e^{l_{\xi}}\right)} & \left(\mathbb R\Gamma\left(X,\Omega^*_{X/k}\right)\otimes A,d+l_{x_1}\right)\ar[l]\ar[d]^{\mathbb R\Gamma\left(X,e^{l_{\xi}}\right)} \\ 
\mathbb R\Gamma\left(X,\Omega^*_{\mathbb R\mathscr O_A\left(x_2\right)/A}\right) & \left(\Gamma\left(X,\mathscr A_X^{*,*}\otimes A\right),\partial+\left(\bar{\partial}+l_{x_2}\right)\right)\ar[l] & \left(\mathbb R\Gamma\left(X,\Omega^*_{X/k}\right)\otimes A,d+l_{x_2}\right).\ar[l] }
\end{equation}
Notice that the right hand square of diagram \eqref{diagr3} commutes because the morphism $\Gamma\left(X,e^{l_{\xi}}\right)$ is induced by $\mathbb R\Gamma\left(X,e^{l_{\xi}}\right)$ via the standard Dolbeaut resolution; as regards the left hand square, consider for all $n$ the unglobalised diagram
\begin{equation} \label{diagr4}
\xymatrix{\Omega^n_{\mathbb R\mathscr O_A\left(x_1\right)}\ar[d]^{\Omega^n_{e^{\xi}}} & \left(\mathscr A_X^{n,*}\otimes A,\bar{\partial}+l_{x_1}\right)\ar[l]\ar[d]^{e^{l_{\xi}}} \\
\Omega^*_{\mathbb R\mathscr O_A\left(x_2\right)} & \left(\mathscr A_X^{n,*}\otimes A,\bar{\partial}+l_{x_2}\right)\ar[l] }
\end{equation}
and again filter all complexes by powers of the maximal ideal $\mathfrak m_A$ in order to kill the derivations $l_{x_1}$, $l_{x_2}$ and hence the gauge $l_{\xi}$: we end up with a sequence of commutative diagrams
\begin{equation*}
\xymatrix{\Omega^n_{\mathscr A^{0,*}_X}\otimes\frac{\mathbf m_A^p}{\mathbf m_A^{p+1}}\ar@{=}[d] & \left(\mathscr A_X^{n,*}\otimes\frac{\mathbf m_A^p}{\mathbf m_A^{p+1}},\bar{\partial}\right)\ar[l]\ar@{=}[d] \\
\Omega^n_{\mathscr A^{0,*}_X}\otimes\frac{\mathbf m_A^p}{\mathbf m_A^{p+1}} & \left(\mathscr A_X^{n,*}\otimes\frac{\mathbf m_A^p}{\mathbf m_A^{p+1}},\bar{\partial}\right)\ar[l] }
\end{equation*}
therefore diagram \eqref{diagr4} has to commute and so does diagram \eqref{diagr3}, as well. This observation completes the proof. 
\end{proof}

\section{The Period Map as a Morphism of Derived Stacks}

Theorem \ref{univ lpm ddf} gives the ultimate picture of the local period map as a deformation-theoretic morphism, since it explains how the Fiorenza-Manetti map lifts naturally to the context of Derived Deformation Theory. Anyway, despite being entirely canonical, the Fiorenza-Manetti-Martinengo map\footnote{Again, Theorem \ref{univ lpm ddf} allows us to drop any further adjective.} is still a local morphism: concretely this means that it provides a fully satisfying description of the behaviour of {} ``derived variations of the Hodge structures'' associated to some nice $k$-scheme $X$ with respect to the infinitesimal derived deformations of the scheme itself, but this map is not able to give us any global information, i.e. it does not provide significant relations between the associated global (derived) moduli stacks. In this section we will set a path towards a coherent notion of global derived period map, which will be further studied in \cite{dNH}.
\subsection{A Non-Geometric Global Period Map}
Fix again $X$ to be a smooth proper scheme over $k$ of dimension $d$ and define the (non-geometric) derived stack 
\begin{equation}\label{new sch attempt}
\mathcal{DS}ch^X_{d/k}:=\mathcal{DS}ch_{d/k}\times^h_{\mathbb R\mathcal Perf_k}\left\{\mathbb R\Gamma\left(X,\Omega^*_{X/k}\right)\right\}
\end{equation}
where the map defining the homotopy fibre product in formula \eqref{new sch attempt} is
\begin{eqnarray*}
&\mathcal D\mathcal Sch_{d/k}&\xrightarrow{\hspace*{1cm}}\mathbb R\mathcal Perf_k \nonumber\\
&Y&\longmapsto \quad\quad\mathbb R\Gamma\left(\pi^0 Y,\Omega^*_{Y/A}\right)
\end{eqnarray*}
for all $A\in\mathfrak{dgAlg}_k^{\leq 0}$.
\begin{defn}
Define the \emph{non-geometric (universal) global period map} to be the morphism of derived stacks
\begin{eqnarray} \label{final mapping}
\underline{\mathbb R\mathcal P}:&\mathcal D\mathcal Sch^X_{d/k}&\xrightarrow{\hspace*{1cm}}\mathcal{DF}lag^0_k\left(\mathbb R\Gamma\left(X,\Omega^*_{X/k}\right)\right) \nonumber\\
&\left[Y,\theta:\mathbb R\Gamma\left(\pi^0 Y,\Omega^*_{Y/A}\right)\overset{\sim}{\rightarrow}\mathbb R\Gamma\left(X,\Omega^*_{X/k}\right)\otimes A\right]&\longmapsto\quad\quad\left(\mathbb R\Gamma\left(\pi^0 Y,\Omega^*_{Y/A}\right),\mathcal F\right)
\end{eqnarray}
for all $A\in\mathfrak{dgAlg}_k^{\leq 0}$.
\end{defn}
\begin{prop} \label{very final}
Consider the diagram of derived stacks and derived deformation functors
\begin{equation*}
\xymatrix{\mathcal D\mathcal Sch^X_{d/k}\ar[r]^{\underline{\mathbb R\mathcal P}\qquad\qquad} & \mathcal{DF}lag^0_k\left(\mathbb R\Gamma\left(X,\Omega^*_{X/k}\right)\right) \\
\mathbb R\mathrm{Def}_X\ar[u]\ar[r]^{\mathbb R\mathcal P\quad\quad} & \mathrm{hoFlag}^{\mathcal F}_{\mathbb R\Gamma\left(X,\Omega^*_{X/k}\right)}\ar[u]}
\end{equation*}
where the right-hand-side vertical arrow is the formal neighbourhood inclusion and the left-hand-side one is the composite
\begin{equation*}
\resizebox{.85\hsize}{!}{\xymatrix{ \mathbb R\mathrm{Def}_X\ar[rrrr]^{\left(\mathrm{Id}_{\mathbb R\mathrm{Def}_X},\mathrm{const}_{\mathbb R\Gamma\left(X,\Omega^*_{X/k}\right)}\right)\qquad\qquad\qquad\quad} & & & & \mathbb R\mathrm{Def}_X\times^h_{\mathrm{RDef}_{\mathbb R\Gamma\left(X,\Omega^*_{X/k}\right)}}\left(\mathbb R\Gamma\left(X,\Omega^*_{X/k}\right)\right)\ar[rrrr]^{\;\;\quad\qquad\qquad{\quad\scriptstyle{formal\;neighbourhood\;inclusion}}} & & & & \mathcal D\mathcal Sch^X_{d/k}.} }
\end{equation*}
Then the above square is well-defined and commutes.
\end{prop}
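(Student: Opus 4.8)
The plan is to prove both claims---well-definedness and commutativity---by reducing everything to the identification of formal neighbourhoods, together with the explicit description of $\mathbb R\mathcal P$ given in Definition \ref{g FMM map}. First I would observe that $\underline{\mathbb R\mathcal P}$ sends the distinguished $k$-point $\left(X,\mathrm{id}\right)$ of $\mathcal{DS}ch^X_{d/k}$ to $\left(\mathbb R\Gamma\left(X,\Omega^*_{X/k}\right),F^{\bullet}\right)$: this is immediate, since for $A=k$ the derived de Rham complex $\mathbb R\Gamma\left(\pi^0 X,\Omega^*_{X/k}\right)=\prod_i\bigwedge^i\mathbb L_{X/k}$ with its Hodge filtration is literally $\left(\mathbb R\Gamma\left(X,\Omega^*_{X/k}\right),F^{\bullet}\right)$ because $X$ is smooth. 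Hence the base points of the two formal-neighbourhood inclusions in the square are compatible, and it remains to identify the sources of the two vertical arrows as formal neighbourhoods; by Proposition \ref{comp flag fin}.2 the right-hand source $\mathrm{hoFlag}^{\mathcal F}_{\mathbb R\Gamma\left(X,\Omega^*_{X/k}\right)}$ is the formal neighbourhood of the derived flag stack at that point, which handles the right vertical map.

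For the left vertical map I would use that the formal neighbourhood functor $\mathcal F\mapsto\mathcal F\left(-\right)\times^h_{\mathcal F\left(k\right)}\left\{x\right\}$ is itself a homotopy limit and therefore commutes with the homotopy fibre product defining $\mathcal{DS}ch^X_{d/k}$ in \eqref{new sch attempt}. Combining this with Remark \ref{DSch} and with the identification --- extracted from the proof of Proposition \ref{comp flag fin} --- of the formal neighbourhood of $\mathbb R\mathcal Perf_k$ at $\mathbb R\Gamma\left(X,\Omega^*_{X/k}\right)$ with $\mathbb R\mathrm{Def}_{\mathbb R\Gamma\left(X,\Omega^*_{X/k}\right)}$, one obtains a canonical equivalence between the formal neighbourhood of $\mathcal{DS}ch^X_{d/k}$ at $\left(X,\mathrm{id}\right)$ and $\mathbb R\mathrm{Def}_X\times^h_{\mathbb R\mathrm{Def}_{\mathbb R\Gamma\left(X,\Omega^*_{X/k}\right)}}\left\{\mathbb R\Gamma\left(X,\Omega^*_{X/k}\right)\right\}$, which is precisely the middle object of the left composite. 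Under this equivalence the left vertical arrow becomes the formal-neighbourhood inclusion precomposed with the section $\left(\mathrm{Id},\mathrm{const}\right)$, so the remaining content of well-definedness is that this section is a genuine map into the homotopy fibre product, i.e. that the ``de Rham complex'' morphism $\mathbb R\mathrm{Def}_X\to\mathbb R\mathrm{Def}_{\mathbb R\Gamma\left(X,\Omega^*_{X/k}\right)}$ (filtration forgotten) comes equipped with a canonical null-homotopy.

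Producing that null-homotopy is the step I expect to be the main obstacle, and I would handle it by globalising the resolution argument already carried out for Theorem \ref{univ lpm ddf}. Concretely: by the zig-zag of quasi-isomorphisms \eqref{resol} the de Rham complex of the deformation $\mathbb R\mathscr O_A\left(x\right)$ is $\left(\mathbb R\Gamma\left(X,\Omega^*_{X/k}\right)\otimes A,d+l_x\right)$, so by Lemma \ref{Def_End} and diagram \eqref{diagr1} the composite in question is, up to the weak equivalences $\mu$ and $\nu$, the map $\mathbb R\mathrm{Def}\left(\tilde l\right)$ induced by the derived Lie derivative; since $\tilde i$ is a Cartan homotopy with $\tilde l=d\tilde i$, point (4) of Remark \ref{rem Cart htpy} gives that $e^{\tilde i}$ is a homotopy between $\tilde l$ and $0$, and this is precisely the required null-homotopy --- it is the Gauss--Manin trivialisation of the de Rham local system over an Artinian base. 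The delicate part is to check that this homotopy is compatible with the trivialisation datum $\theta$ built into $\mathcal{DS}ch^X_{d/k}$ and, on morphisms, with the gauge action; but this is exactly what diagrams \eqref{diagr1} and \eqref{diagr3} in the proof of Theorem \ref{univ lpm ddf} establish, once one runs that argument on $\mathrm{BDel}_X$ rather than on the abstract Hinich nerve (legitimate by Theorem \ref{Del grpd}).

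Finally, for commutativity: with the base points matched and the left vertical arrow identified as a formal-neighbourhood inclusion through the canonical section, the square commutes because the defining formula for $\mathbb R\mathcal P$ in Definition \ref{g FMM map} --- sending $\left[\mathscr O_{A,*}\to\mathscr O_X\right]$ to $\left[\left(\mathbb R\Gamma\left(\pi^0\mathcal X,\Omega^*_{\mathcal X/A}\right),F^{\bullet}\right),\tilde\varphi\right]$ --- is literally the restriction of the defining formula for $\underline{\mathbb R\mathcal P}$, the only thing to note being that the trivialisation $\theta$ attached to a formal deformation of $X$ via the section of the previous paragraph is the one compatible with $\tilde\varphi$, so that the target indeed lands in $\mathrm{hoFlag}^{\mathcal F}_{\mathbb R\Gamma\left(X,\Omega^*_{X/k}\right)}$. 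By Theorem \ref{Del grpd} it suffices to verify this agreement on $\mathrm{BDel}_X$, where it is contained in the computations already performed in the proof of Theorem \ref{univ lpm ddf}.
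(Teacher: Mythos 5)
Your proposal is correct and follows essentially the same route as the paper: the paper's (very terse) proof likewise obtains well-definedness from Remark \ref{DSch} and Proposition \ref{comp flag fin}.2 and verifies commutativity by walking along the arrows exactly as in the proof of Theorem \ref{univ lpm ddf}. Your write-up merely makes explicit what the paper leaves implicit, namely the identification of the formal neighbourhood of $\mathcal{DS}ch^X_{d/k}$ with the homotopy fibre product of formal neighbourhoods and the trivialisation (null-homotopy) supplied by the Cartan homotopy $e^{\tilde i}$ via diagrams \eqref{diagr1} and \eqref{diagr3}.
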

\begin{proof}
The fact that the diagram is well-defined is precisely the content of Remark \ref{DSch} and Theorem \ref{comp flag fin}.2; the commutativity is readily verified just walking along the arrows, as done in the proof of Theorem \ref{univ lpm ddf}.
\end{proof}
\subsection{Towards a Derived Analytic Period Mapping}
Proposition \ref{very final} is certainly a first step towards a globalisation of Griffiths period map, but it is certainly not sufficiently satisfying since map \eqref{final mapping} is highly non-geometric; let us outline here some of the ideas that are to be developed in \cite{dNH} and which should allow us to identify a reasonable global derived period map in the complex-analytic context. \\
Fix $k$ to be the field of complex numbers $\mathbb C$ and the base scheme $X$ to be smooth and projective; consider a family $p:\mathcal X\rightarrow \mathcal S$ of derived schemes globally deforming it, i.e. a homotopy pull-back diagram
\begin{equation*}
\xymatrix{\ar@{} |{\Box^h}[dr]X\ar@{^{(}->}[r]^i\ar[d] & \mathcal X\ar[d]^p \\
\mathrm{Spec}\left(\mathbb C\right)\ar[r] & \mathcal S}
\end{equation*}
where $p$ is flat and $H^0\left(p\right)$ is smooth. For any point $s\in\mathcal S$, we can consider the (homotopy) fibre of $p$ at $s$, which is a derived scheme we will denote as $\mathcal X_s$. Unlike the infinitesimal case, the associated filtered (derived) De Rham complex $\left(\mathbb R\Gamma\left(\pi^0\mathcal X_s,\Omega^*_{\mathcal X_s/\mathbb C}\right),F^{\bullet}\right)$ does not determine a point in the homotopy flag variety associated to $\mathbb R\Gamma\left(X,\Omega^*_{X/\mathbb C}\right)$, because the group $\Omega\left(\pi^0\mathcal S\right)$ -- where $\Omega\left(\pi^0\mathcal S\right)$ stands for the (simplicial) loop group attached to the topological space $\pi^0\mathcal S$ -- acts by (higher) monodromy on the fibres of $\mathbb Rp_{*}\Omega_{\mathcal X/S}$ and thus on $\mathcal{DF}lag_{\mathbb C}\left(\mathbb R\Gamma\left(X,\Omega^*_{X/\mathbb C}\right)\right)$. Therefore we would like to define the global period map associated to the family $p$ as a morphism of derived stacks
\begin{eqnarray} \label{finalissima}
&\mathcal S&\longrightarrow\nicefrac{\mathcal{DF}lag_{\mathbb C}\left(\mathbb R\Gamma\left(X,\Omega^*_{X/\mathbb C}\right)\right)}{\Omega\left(\pi^0S\right)} \nonumber \\
&s&\longmapsto\left[\left(\mathbb R\Gamma\left(\pi^0\mathcal X_s,\Omega^*_{\mathcal X_s/\mathcal S}\right),F^{\bullet}\right)\right].
\end{eqnarray}
However it is not clear that the quotient $\nicefrac{\mathcal{DF}lag_{\mathbb C}\left(\mathbb R\Gamma\left(X,\Omega^*_{X/\mathbb C}\right)\right)}{\Omega\left(\pi^0\mathcal S\right)}$ exists as a derived geometric stack in the algebraic sense -- and in general we do not expect it to do -- but it is very likely to exist in the analytic setting. As a matter of fact in the underived case the monodromy action on the Grassmannian (or flag variety) is properly discontinuous and this ensures that the quotient has a complex structure; it is then reasonable to expect that the action of $\Omega\left(\pi^0\mathcal S\right)$ on the {} ``analytification'' of $\mathcal{DF}lag_{\mathbb C}\left(\mathbb R\Gamma\left(X,\Omega^*_{X/\mathbb C}\right)\right)$ (or a suitable open substack of it recovering the notion of period domain in the derived setting) should be regular enough for the quotient to exist as a derived analytic stack and map \eqref{finalissima} to be (derived) holomorphic. Thus the ultimate study of the global derived period mapping involves massively the newly-born theory of Derived Analytic Geometry, as well as a good notion of analytification functor in the derived context (see \cite{Lu4}, \cite{Porta1}, \cite{Porta2}, \cite{Porta3}, \cite{PortaYue} for foundational work). The goal of \cite{dNH} is to analyse thoroughly the above ideas.
\section*{Notations and conventions}

\begin{itemize}
\item If $i\geq 0$ $\Delta^i$ is the $i$-th standard simplicial simplex
\item $\mathrm{diag}\left(-\right)=$ diagonal of a bisimplicial set
\item $k=$ fixed field of characteristic $0$, unless otherwise stated
\item If $A$ is a (possibly differential graded) local Artin ring, $\mathfrak m_A$ will be its unique maximal (possibly differential graded) ideal
\item $R=$ fixed (possibly differential graded) commutative unital $k$-algebra, unless otherwise stated
\item If $R$ is a commutative unital ring then $\underline R$ is the constant sheaf of stalk $R$
\item If $\left(V^*,d\right)$ is a cochain complex (in some suitable category) then $\left(V\left[n\right]^*,d_{\left[n\right]}\right)$ will be the cochain complex such that $V\left[n\right]^j:=V^{j+n}$ and $d_{\left[n\right]}^j=d^{j+n}$
\item $\mathbb G_m=$ multiplicative group scheme over $k$
\item $X=$ smooth proper scheme over $k$ of finite dimension, unless otherwise stated
\item $\mathscr O_X=$ structure sheaf of $X$
\item $\mathscr T_X=$ tangent sheaf of $X$
\item $\mathscr A^{0,*}_X=$ {}``Dolbeaut'' complex of $X$
\item $\mathscr A^{*,*}_X=$ double complex of {}``$\bar k$''-valued forms on $X$
\item $\Omega^*_{X/k}=$ algebraic De Rham complex of $X$
\item $F^{\bullet}=$ Hodge filtration on $\Omega^*_{X/k}$ or cohomology, unless otherwise stated
\item $\mathbb L_{X/k}=$ (absolute) cotangent complex of $X$ over $k$
\item $\mathfrak{Sh}\left(X\right)=$ category of sheaves of abelian groups over $X$
\item $\mathrm D\left(X\right)=$ derived category of $X$
\item $\Delta=$ category of finite ordinal numbers
\item $\mathfrak{Alg}_k=$ category of commutative associative unital algebras over $k$
\item $\mathfrak{Aff}_k=$ category of (linear) affine spaces over $k$  
\item $\mathfrak{Art}_k=$ category of local Artin algebras over $k$
\item $\mathfrak{Ch}_{\geq 0}\left(\mathfrak{Vect}_k\right)=$ model category of chain complexes of vector spaces over $k$ in non-negative degrees
\item $\mathfrak{Ch}_{\geq 0}\left(\mathfrak{Aff}_k\right)$ model category of (chain) $\mathrm{dg}_{\geq 0}$-affine spaces over $k$
\item $\mathfrak{Def}^{\mathrm{Hin}}_k=$ $\infty$-category of Hinich derived deformation functors (over $k$)
\item $\mathfrak{Def}^{\mathrm{Man}}_k=$ $\infty$-category of Manetti extended deformation functors (over $k$)
\item $\mathfrak{dg}_{\geq 0}\mathfrak{Alg}_k=$ model category of (chain) differential graded commutative algebras over $k$ in non-negative degrees
\item $\mathfrak{dg}_{\geq 0}\mathfrak{Cat}_k=$ model category of (chain) differential graded categories over $k$
\item $\mathfrak{dg}_{\geq 0}\mathfrak{Cat}^{\mathfrak{Aff}}_k=$ $\infty$-category of affine (chain) differential graded categories over $k$
\item $\mathfrak{dg}_{\geq 0}\mathfrak{Grpd}^{\mathfrak{Aff}}_k=$ $\infty$-category of affine (chain) differential graded groupoids over $k$
\item $\mathfrak{dgAlg}^{\leq 0}_k=$ model category of (cochain) differential graded commutative algebras over $k$ in non-positive degrees
\item $\mathfrak{dgArt}_k=$ model category of (cochain) differential graded local Artin algebras over $k$
\item $\mathfrak{dgArt}^{\leq 0}_k=$ model category of (cochain) differential graded local Artin algebras over $k$ in non-positive degrees
\item $\mathfrak{dgLie}_k=$ model category of (cochain) differential graded Lie algebras over $k$
\item $\mathfrak{dgMod}_R=$ model category of $R$-modules in (cochain) complexes
\item $\mathfrak{dg}_b\mathfrak{Nil}^{\leq 0}_k=$ $\infty$-category of bounded below differential graded commutative $k$-algebras in non-positive degrees such that the canonical map $A\rightarrow H^0\left(A\right)$ is nilpotent
\item $\mathfrak{dgVect}_k^{\leq 0}=$ model category of (cochain) differential graded vector spaces over $k$ in non-positive degrees
\item $\mathfrak{FdgMod}_R=$ model category of filtered $R$-modules in (cochain) complexes
\item $\mathfrak{Grpd}=$ 2-category of groupoids
\item $\mathfrak{Set}=$ category of sets
\item $\mathfrak{Sch}_k=$ category of schemes over $k$
\item $\mathfrak{sAff}_k=$ model category of simplicial affine spaces over $k$
\item $\mathfrak{sAlg}_k=$ model category of simplicial commutative associative unital algebras over $k$ 
\item $\mathfrak{sCat}=$ model category of simplicial categories
\item $\mathfrak{sCat}_k=$ $\infty$-category of $k$-simplicial categories over $k$
\item $\mathfrak{sCat}_k^{\mathfrak{Aff}}=$ $\infty$-category of affine simplicial categories over $k$
\item $\mathfrak{sGrpd}=$ model category of simplicial groupoids
\item $\mathfrak{sGrpd}_k^{\mathfrak{Aff}}=$ $\infty$-category of affine simplicial groupoids over $k$
\item $\mathfrak{sSet}=$ simplicial model category of simplicial sets
\item $\mathfrak{sVect}_k=$ model category of simplicial vector spaces over $k$
\item $\mathfrak{Vect}_k=$ category of vector spaces over $k$
\end{itemize}

\end{document}